\newcommand\encircle[1]{%
  \tikz[baseline=(X.base)] 
    \node (X) [draw, shape=circle, inner sep=0] {\strut #1};}
\newcommand\enred[1]{%
  \tikz[baseline=(X.base)] 
    \node (X) [draw, shape=circle, inner sep=0, color=red] {\strut #1};}
\newcommand\enblue[1]{%
  \tikz[baseline=(X.base)] 
    \node (X) [draw, shape=circle, inner sep=0, color=blue] {\strut #1};}
\newcommand{\la}{\lambda}
\newcommand{\tb}{\textbf}
\newcommand{\cd}{\cdots}
\newcommand{\Z}{\mathbb{Z}}
\newsavebox\myboxA
\newsavebox\myboxB
\newlength\mylenA
\newcommand*\xoverline[2][0.75]{%
    \sbox{\myboxA}{$\m@th#2$}%
    \setbox\myboxB\null
    \ht\myboxB=\ht\myboxA%
    \dp\myboxB=\dp\myboxA%
    \wd\myboxB=#1\wd\myboxA
    \sbox\myboxB{$\m@th\overline{\copy\myboxB}$}
    \setlength\mylenA{\the\wd\myboxA}
    \addtolength\mylenA{-\the\wd\myboxB}%
    \ifdim\wd\myboxB<\wd\myboxA%
       \rlap{\hskip 0.5\mylenA\usebox\myboxB}{\usebox\myboxA}%
    \else
        \hskip -0.5\mylenA\rlap{\usebox\myboxA}{\hskip 0.5\mylenA\usebox\myboxB}%
    \fi}
\title{Proof of a Conjecture of Nath and Sellers on Simultaneous Core Partitions}
\author{Yetong Sha 
\and 
Huan Xiong
}
\email{dellenudaubg@gmail.com},\email{huan.xiong.math@gmail.com}).
\begin{document}




\maketitle

\begin{abstract}
In 2016, Nath and Sellers proposed a conjecture regarding the precise largest size of ${(s,ms-1,ms+1)}$-core partitions. In this paper, we prove their conjecture. One of the key techniques in our proof is to introduce and study the properties of generalized-$\beta$-sets, which extend the concept of $\beta$-sets for core partitions. Our results can be interpreted as a generalization of the well-known result of Yang, Zhong, and Zhou concerning the largest size of $(s,s+1,s+2)$-core partitions.
\end{abstract}

\section{Introduction} Recall that an \emph{integer partition}, or simply a \emph{partition}, is a finite non-increasing sequence of positive integers $\lambda = (\lambda_1, \lambda_2, \ldots, \lambda_\ell)$ with $\lambda_1 \geq \lambda_2\geq  \ldots\geq  \lambda_\ell>0$ (see \cite{Macdonald,ec2}). Here $\ell$ is called the \emph{length}, $\la_i\ (1\leq i\leq \ell)$ are the \emph{parts} and $|\la|:=\sum_{1\leq i\leq \ell}\lambda_i$ is the \emph{size} of $\lambda$. Each partition $\lambda$ can be visualized by its \emph{Young diagram}, which is an array of boxes arranged in left-justified rows with $\lambda_i$ boxes in the $i$-th row. By flipping the Young diagram of the partition along its main diagonal, we obtain another partition corresponding to the new Young diagram. Such partitions are said to be \emph{conjugate} to each other. A partition is called \emph{self-conjugate} if it is equal to its conjugate partition.
For each box $\square=(i,j)$ in the $i$-th row and the $j$-th column of the Young diagram of $\la$, its \emph{hook length} $h_\square=h_{ij}$ is defined to be the number of boxes exactly below, exactly to the right, or the box itself. Let~$s>0$ be a positive integer.  A partition $\lambda$ is called an \emph{$s$-core partition} if its hook length set doesn't contain any multiple of~$s$. Furthermore,  $\lambda$ is called an \emph{$(s_1,s_2,\ldots, s_m)$}-core partition if it is simultaneously an $s_1$-core, an $s_2$-core, $\ldots$, and an $s_m$-core partition (see \cite{tamd, KN}). For instance, Figure \ref{fig:1} gives the Young diagram and hook lengths of the partition $(6,3,2,1)$ and Figure \ref{fig:2} gives the Young diagram and hook lengths of its conjugation. The partition $(6,3,2,1)$ is a $(4,6,11)$-core partition since its hook length set doesn't contain multiples of $4$, $6$ or $11$. One can note the first column hook lengths are $9,5,3,1$ and they
uniquely determine the partition.

\begin{figure}[htbp] \label{6321}
\begin{center}
\Yvcentermath1

$\young(975321,531,31,1)$


\end{center}
\caption{The Young diagram and hook lengths of the partition
$(6,3,2,1)$.} \label{fig:1}
\end{figure}

\begin{figure}[htbp] \label{432111}
\begin{center}
    $\young(9531,731,51,3,2,1)$
\end{center}
\caption{The Young diagram and hook lengths of the conjugation $(4,3,2,1,1,1)$ of the partition $(6,3,2,1)$.}
\label{fig:2}
\end{figure}

Core partitions arise naturally in the study of modular representation theory and combinatorics. 
For example, core partitions label the blocks of irreducible characters of symmetric groups (see \cite{ols}). Therefore, simultaneous core partitions play important roles in the study of relations between different blocks in the modular group representation theory.
Simultaneous core partitions are connected with rational combinatorics (see \cite{HN}).
Also, simultaneous core partitions are connected with Motzkin paths and Dyck paths (see \cite{HJJ1,HJ,SYH,SDH}).
Some statistics of simultaneous core partitions, such as numbers of partitions, numbers of corners, largest sizes and average sizes, have attracted much attention in the past twenty years (see  \cite{tamd1,  and, AHJ, Straub1, CHW,HJJ, CEZ, ford, HHL, PJ,NK,WX,NY,N3,NS,   SNKC,  SZ,Wang,Xiong2, Xiong4,   Za,Za2,ZZ}). For example, Anderson \cite{and} showed that the number of $(s_1,s_2)$-core partitions is equal to $\frac{1}{s_1+s_2}  \binom{s_1+s_2}{s_1}$ when $s_1$ and $s_2$ are coprime to each other. Armstrong \cite{AHJ} conjectured that the average size of $(s_1,s_2)$-core partitions equals~${(s_1-1)(s_2-1)(s_1+s_2+1)}/{24}$ when $s_1$ and $s_2$ are coprime to each other, which was first proved by Johnson \cite{PJ} and later by Wang \cite{Wang}. However, there are still a lot of unsolved problems in this field.

In this paper, we focus on the largest size of simultaneous core partitions. For $(s_1,s_2)$-core partitions, Olsson and Stanton \cite{ols} showed that the largest size of such partitions is~$\frac{({s_1}^2-1)({s_2}^2-1)}{24}$ when $s_1$ and $s_2$ are coprime to each other.  Straub \cite{Straub} studied the largest size of $(s,s+2)$-core partitions with distinct parts, and conjectured that the largest size should be equal to $\frac{1}{384}(s^2-1)(s+3)(5s+17)$ in 2016. Straub's conjecture was first proved by Yan, Qin, Jin and Zhou \cite{YQJZ}. Later, the second author \cite{Xiong3} obtained the largest sizes of~$(t, mt+1)$ and~$(t, mt-1)$-core partitions with distinct parts. Recently, Nam and Yu \cite{NY} derived formulas for the largest sizes of $(s,s+1)$-core partitions whose all parts are odd or all parts are even. 

For $m\geq 3$, \emph{$(s_1,s_2,\ldots, s_m)$}-core partitions have also been widely studied. The largest size of $(s,s+1,s+2)$-core partitions was conjectured to be 
$$
\begin{cases}
t\binom{t+1}{3} & \text{ if } s=2t-1;\\
t\binom{t}{3} +\binom{t+1}{3} & \text{ if } s=2t-2
\end{cases}
$$
by Amdeberhan \cite[Conjecture 11.2]{tamd}, and later proved by Yang, Zhong and Zhou \cite{YZZ}. Furthermore, the second author \cite{Xiong1} derived the formula for the largest size of general $(s,s+1,s+2,\ldots,s+p)$-core partitions. In 2019, Baek, Nam and Yu \cite{BNY} studied self-conjugate $(s,s+1,s+2)$-core partitions and obtained their largest size. 


In this paper, we prove the following result, which verifies Nath and Sellers' conjecture~\cite{NS2} on the largest size of  $(s,ms-1,ms+1)$-core partitions. 

\begin{theorem}[see Conjecture $57$ of \cite{NS2}]\label{conj} The largest size of an $(s,ms-1,ms+1)$-core partition is
$$
\begin{cases}
\frac{m^2t(t-1)(t^2-t+1)}{6} & \text{ if } s=2t-1;\vspace{1ex}\\
\frac{m^2(t-1)^2(t^2-2t+3)}{6} - \frac{m(t-1)^2}{2}  & \text{ if } s=2t-2.
\end{cases}
$$
There are two such maximal partitions; one's $\beta$-set is ${\mathcal L}_m(s)$ (please see Section 2 for the definition of $\beta$-sets and Section 3 for the definition of ${\mathcal L}_m(s)$), and the other one is the conjugate of the first one.
\end{theorem}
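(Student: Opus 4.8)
The plan is to reduce the maximization to a discrete optimization over $\beta$-sets and then solve that optimization explicitly. First I would set up the dictionary between partitions and $\beta$-sets: writing a $\beta$-set with $n$ beads as $X=\{x_1>\cdots>x_n\}\subseteq\mathbb{Z}_{\ge 0}$ (beads allowed down to position $0$), the size is $|\lambda|=\sum_{x\in X}x-\binom{n}{2}$, and each core condition becomes a closure condition: for every modulus $d\in\{s,\,ms-1,\,ms+1\}$ and every $x\in X$ with $x\ge d$ one has $x-d\in X$. The purpose of the generalized-$\beta$-set notion of Section~2 is to make these three simultaneous closure conditions tractable by fixing a convenient normalization, so I would first establish its basic properties: that every $(s,ms-1,ms+1)$-core has a canonical such set, that the size is computed as above, and that conjugation acts by an explicit complementation-type involution preserving all three closure conditions and the size.

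Second, I would decompose $X$ by residues modulo $s$ (the $s$-runner abacus). Closure under subtracting $s$ forces the beads on each runner to form an initial segment, so an $s$-core is encoded by its vector of bead-counts $(c_0,\dots,c_{s-1})$ and $|\lambda|$ becomes an explicit positive quadratic form in this vector. The key computation is that subtracting $ms-1$ sends residue $r$ to residue $r+1$ while dropping the level by $m$, and subtracting $ms+1$ sends residue $r$ to residue $r-1$, again dropping by $m$. Hence the remaining two closure conditions translate into a system of linear inequalities relating the heights of neighbouring runners (roughly $|c_{r+1}-c_r|\le m$ cyclically, with offset corrections), whose offsets around the cycle also force the feasible region to be a bounded polytope. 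This converts ``largest $(s,ms-1,ms+1)$-core'' into maximizing a fixed convex quadratic form over the integer points of this polytope.

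Third, I would solve this program and match it to $\mathcal{L}_m(s)$. The shape of the optimal height-profile is governed by the parity of $s$, which is precisely why the answer splits into $s=2t-1$ and $s=2t-2$: the extremal profile is symmetric about the centre, and its exact form---together with the leftover correction $-\,m(t-1)^2/2$ in the even case---depends on whether there is a single central runner or two. I would verify directly that $\mathcal{L}_m(s)$ satisfies all the inequalities, evaluate the quadratic form on it to recover the two stated formulas, and then prove global optimality by an exchange argument: any admissible configuration differing from $\mathcal{L}_m(s)$ can be pushed toward it without decreasing the size, strictly increasing it unless the configuration is $\mathcal{L}_m(s)$ or its conjugate. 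Since both the inequalities and the size are invariant under the conjugation involution, maximizers occur in conjugate pairs, so checking that $\mathcal{L}_m(s)$ is not self-conjugate yields exactly the two claimed partitions.

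The main obstacle I anticipate lies at the interface of the second and third steps: correctly extracting the \emph{complete} system of inequalities from the two twisted closure conditions---in particular the wrap-around between runner $0$ and runner $s-1$, which shifts the level by an extra unit---and then proving that $\mathcal{L}_m(s)$ is the \emph{global} integer optimum rather than merely a vertex or a local maximum. The exchange argument must be uniform in $m$ and robust to the parity split, and this is exactly where the generalized-$\beta$-set bookkeeping should earn its keep.
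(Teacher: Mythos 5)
Your reduction in Steps 1--2 is sound and is genuinely different from the paper's setup: you pass to the classical $s$-runner encoding, where an $s$-core is a height vector $(c_0,\dots,c_{s-1})$ with $c_0=0$ (runner $0$ must be empty under the first-column-hook-length normalization -- note it is this normalization, not the cyclic offsets, that bounds your feasible region; the all-equal vector satisfies every difference constraint), and the $(ms\pm1)$-conditions become difference constraints between adjacent runners with a wrap-around shift. The paper instead never forms this polytope: it starts from the Nath--Sellers structural theorem (Theorem \ref{L_m(s)}), which confines the $\beta$-set to the union of $ms$-pyramids $\mathcal{L}_m(s)$, relaxes to ``generalized-$\beta$-sets'' defined by inequalities on the block counts $a_i$ and block maxima $n_i$, and optimizes within that relaxed class. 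The relaxation exists precisely because a single local move can destroy the genuine $\beta$-set property while remaining a generalized-$\beta$-set; your route would need an analogous device or a direct argument that each exchange stays feasible.

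The genuine gap is Step 3. ``Any admissible configuration differing from $\mathcal{L}_m(s)$ can be pushed toward it without decreasing the size'' is exactly the theorem, and it is not supplied by convexity: maximizing a convex quadratic over a polytope only locates the optimum at \emph{some} vertex, and moving along a feasible segment increases a convex objective toward one of its two endpoints -- not necessarily the endpoint nearer your target. Deciding which endpoint wins requires explicit inequalities comparing competing extremal profiles, and this comparison occupies Sections 5--8 of the paper (Lemmas \ref{initial}, \ref{row}, \ref{Normal}, \ref{Max1}, \ref{Max2} and Theorem \ref{maximal}), including a parity-dependent endgame in which four candidate families $S_k,T_k,P_k,Q_k$ are compared head-to-head and two of them tie. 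Your proposal also slightly overstates what the conjugation symmetry buys: closure of the maximizer set under conjugation plus non-self-conjugacy of $\mathcal{L}_m(s)$ gives \emph{at least} two maximizers, while the claim that there are \emph{exactly} two is again the uniqueness half of the unproven exchange argument. A related small inaccuracy: the optimal runner profile is not symmetric about the centre for both maximizers (for $s=5$, $m=3$ the two profiles are $(0,3,6,5,2)$ and $(0,3,6,3,0)$). As written, the proposal is a plausible programme whose decisive step is asserted rather than proved.
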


\begin{remark} Theorem \ref{conj} can be seen as a generalization of the well-known result of Yang, Zhong and Zhou \cite{YZZ} on the largest size of $(s, s + 1, s + 2)$-core partitions. Thus in the following discussion, we could assume that $s, m \ge 2$.
\end{remark}

\begin{remark}
    Nath and Sellers \cite{NS2} computed the size of the longest $(s,ms-1,ms+1)$-core partition, i.e. the partition with the largest $\beta$-set. Our result shows that the longest of $(s,ms-1,ms+1)$-core partitions is also the largest one.
\end{remark}



Next, we provide two examples for Theorem \ref{conj}.

\begin{example}\label{ex:1.2}
(1) When $s=5$ and $m=3$, Figures \ref{L_3(5)-2} and \ref{P_2} show the two $\beta$-sets of maximal $(s,ms-1,ms+1)$-core partitions (here the $\beta$-set is just the collection of first column hook lengths, visually represented as beads on the $s$-abacus, see Definition \ref{beta} for the formal definition of the $\beta$-set). Figure \ref{L_3(5)-2} corresponds to the partition $(12,9,9,6,6,3,3,3,2,2,2,2,1,1,1,1)$. Figure \ref{P_2} corresponds to the partition $(16,12,8,5,5,\\ 5,3,3,3,1,1,1)$. Both of them have the size $63$ and are conjugate to each other. In each figure, the $\beta$-set is shown by the circled positions.

(2) When $s=6$ and $m=3$, Figures \ref{L_3(6)-2} and \ref{P_3'} show the two $\beta$-sets of maximal $(s,ms-1,ms+1)$-core partitions. Figure \ref{L_3(6)-2} corresponds to the partition~$(22,17,12,12,9,9,9,6,6,\\6,3,3,3,3,2,2,2,2,2,1,1,1,1,1)$. Figure \ref{P_3'} corresponds to the partition $(24,19,14,10,10,\\ 10,7,7,7,4,4,4,2,2,2,2,2,1,1,1,1,1)$. Both of them have the size~$135$ and are conjugate to each other. 
\end{example}

\begin{figure}[h!]
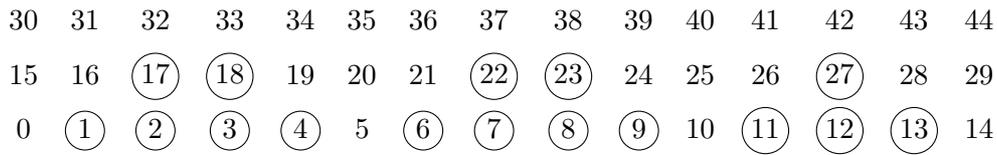

    \centering
    \[ \small
\begin{array}{ccccccccccccccc}
30 & 31 & 32 & 33 & 34 & 35 & 36 & 37 & 38 & 39 & 40 & 41 & 42 & 43 & 44\\
15 & 16 & \encircle{17} & \encircle{18} & 19 & 20 & 21 & \encircle{22} & \encircle{23} & 24 & 25 & 26 & \encircle{27} & 28 & 29\\
0 & \encircle{1} & \encircle{2} & \encircle{3} & \encircle{4} & 5 & \encircle{6} & \encircle{7} & \encircle{8} & \encircle{9} & 10 & \encircle{11} & \encircle{12} & \encircle{13} & 14\\
\end{array} 
\]
    \caption{$\mathcal {L}_3(5)$: The $\beta$-set of a maximal $(5,14,16)$-core partition.}
    \label{L_3(5)-2}
\end{figure}

\begin{figure}[h!]
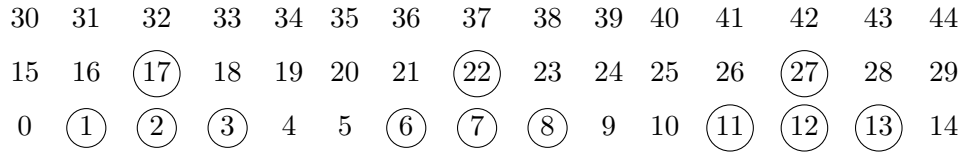

    \centering
    \[ \small
\begin{array}{ccccccccccccccc}
30 & 31 & 32 & 33 & 34 & 35 & 36 & 37 & 38 & 39 & 40 & 41 & 42 & 43 & 44\\
15 & 16 & \encircle{17} & 18 & 19 & 20 & 21 & \encircle{22} & 23 & 24 & 25 & 26 & \encircle{27} & 28 & 29\\
0 & \encircle{1} & \encircle{2} & \encircle{3} & 4 & 5 & \encircle{6} & \encircle{7} & \encircle{8} & 9 & 10 & \encircle{11} & \encircle{12} & \encircle{13} & 14\\
\end{array} 
\]
    \caption{The $\beta$-set of the other maximal $(5,14,16)$-core partition.}
    \label{P_2}
\end{figure}

\begin{figure}[h!]
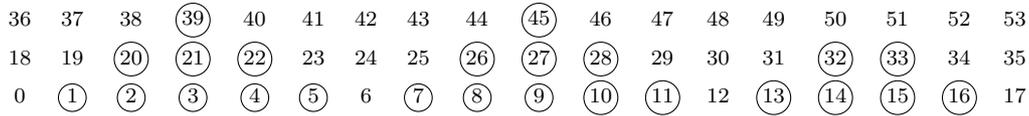

    \centering
    $$ \scriptsize 
    \begin{array}{cccccccccccccccccc}
    36 & 37 & 38 & \encircle{39} & 40 & 41 & 42 & 43 & 44 & \encircle{45} & 46 & 47 & 48 & 49 & 50 & 51 & 52 & 53 \\
    18 & 19 & \encircle{20} & \encircle{21} & \encircle{22} & 23 & 24 & 25 & \encircle{26} & \encircle{27} & \encircle{28} & 29 & 30 & 31 & \encircle{32} & \encircle{33} & 34 & 35\\
    0 & \encircle{1} & \encircle{2} & \encircle{3} & \encircle{4} & \encircle{5} & 6 & \encircle{7} & \encircle{8} & \encircle{9} & \encircle{10} & \encircle{11} & 12 & \encircle{13} & \encircle{14} & \encircle{15} & \encircle{16} & 17\\
    \end{array}
   $$
    \caption{$\mathcal {L}_3(6)$: The $\beta$-set of a maximal $(6,17,19)$-core partition.}
    \label{L_3(6)-2}
\end{figure}

\begin{figure}[h!]
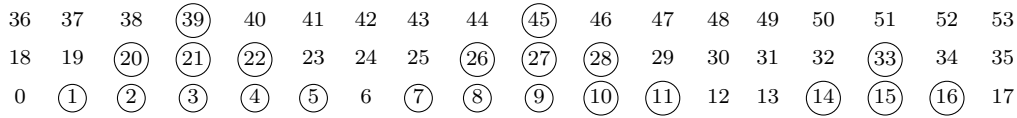

    \centering
   \[ \scriptsize 
    \begin{array}{cccccccccccccccccc}
    36 & 37 & 38 & \encircle{39} & 40 & 41 & 42 & 43 & 44 & \encircle{45} & 46 & 47 & 48 & 49 & 50 & 51 & 52 & 53 \\
    18 & 19 & \encircle{20} & \encircle{21} & \encircle{22} & 23 & 24 & 25 & \encircle{26} & \encircle{27} & \encircle{28} & 29 & 30 & 31 & 32 & \encircle{33} & 34 & 35\\
    0 & \encircle{1} & \encircle{2} & \encircle{3} & \encircle{4} & \encircle{5} & 6 & \encircle{7} & \encircle{8} & \encircle{9} & \encircle{10} & \encircle{11} & 12 & 13 & \encircle{14} & \encircle{15} & \encircle{16} & 17\\ 
    \end{array}
    \]
    \caption{The $\beta$-set of the other maximal $(6,17,19)$-core partition.}
    \label{P_3'}
\end{figure}

\par The rest of the paper is structured as follows. In Section $2$, we recall the definitions of $\beta$-sets and $s$-abacus diagrams, which serve as basic tools to study simultaneous core partitions. 
Following \cite{NS2}, we present the structure of $ms$-abacus diagrams of  $\beta$-sets of $(s,ms-1,ms+1)$-core partitions in Section~$3$. Next, we define and study the properties of generalized-$\beta$-sets in Section $4$. This will allow us to loosen the restriction of $\beta$-sets to make room for the adjustments we will need.
In Section $5$, we prove several lemmas that will be used repeatedly in the later sections. In the following sections, we will use adjustments step by step to find the necessary conditions of $S$ for $S$ being the generalized-$\beta$-set that maximizes $f(S)$ (the definition of $f(S)$ is given in Definition \ref{count}). 
In Section $6$, we determine the possible shape of each row of $S$. In Section $7$, we determine the possible shape of the entire $S$. Finally, we prove the main theorem in Section~$8$.

\section{$\beta$-sets and $s$-abacus diagrams}
In this section, we recall the definitions of $\beta$-sets and $s$-abacus diagrams for partitions.
\begin{definition}[\cite{berge,ols,Olsson}] \label{beta}
The $\beta$-set $\beta(\lambda)$ of a partition $\lambda=(\lambda_1,\cdots,\lambda_\ell)$ is denoted by
\[
\beta(\lambda):=\{h_{i1}:1\leq i\leq \ell\},
\]which contains hook lengths of boxes in the first column of the corresponding Young diagram of $\la$.
\end{definition}

\begin{remark}
It is easy to see that the $\beta$-set completely determines the partition.
\end{remark}

\begin{example}
The $\beta$-set of the partition $(6,3,2,1)$ is $\{9,5,3,1\}$ (See Figure \ref{6321}).
\end{example}



We introduce the notion of the size-counting function. It can be used to compute the size of a partition with the knowledge of its $\beta$-set.
\begin{definition}[size-counting function, see \cite{James}] \label{count}
Let $S$ be a finite set of integers. We define
\[
f(S):=\sum_{x\in S}~ x -\frac{|S|(| S|-1)}{2},
\]
where $|S|$ is the cardinality of the set $S$.
\end{definition}

\begin{lemma}[see \cite{Olsson, Xiong1}]
Let $S$ be the $\beta$-set of a partition $\lambda$. Then the size of~$\la$ is equal to $f(S)$, i.e.,
$$
|\la| = f(S).
$$
\end{lemma}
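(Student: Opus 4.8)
The plan is to compute $f(\beta(\lambda))$ directly from an explicit formula for the first-column hook lengths and to show that the bookkeeping terms cancel. First I would record that for the box $(i,1)$ of $\la=(\lambda_1,\dots,\lambda_\ell)$, the hook length counts the $\lambda_i-1$ boxes to its right in row $i$, the $\ell-i$ boxes below it in the first column (every row contains at least one box), and the box itself, which gives
$$h_{i1}=\lambda_i+\ell-i.$$
Since $\lambda_1\ge\cdots\ge\lambda_\ell>0$, the quantities $\lambda_i-i$ are strictly decreasing in $i$, so the values $h_{i1}$ are pairwise distinct and hence $|\beta(\lambda)|=\ell$.

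Next I would substitute this into Definition \ref{count}. Writing $S=\beta(\lambda)$ and using $|S|=\ell$,
$$f(S)=\sum_{i=1}^{\ell}(\lambda_i+\ell-i)-\frac{\ell(\ell-1)}{2}=|\la|+\sum_{i=1}^{\ell}(\ell-i)-\binom{\ell}{2}.$$
The key observation is the elementary identity $\sum_{i=1}^{\ell}(\ell-i)=\sum_{k=0}^{\ell-1}k=\binom{\ell}{2}$, so the two terms that do not involve the size cancel exactly, leaving $f(S)=|\la|$, as claimed.

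I do not expect a serious obstacle here: the statement reduces to the hook-length identity $h_{i1}=\lambda_i+\ell-i$ together with a single triangular-number summation. The only points requiring genuine care are verifying the distinctness of the first-column hook lengths, so that $|S|$ equals the length $\ell$ rather than something smaller, and matching the term $\binom{\ell}{2}$ against the normalization $-|S|(|S|-1)/2$ that is built into $f$. An alternative, more structural route would be to induct on $|\la|$ by deleting the first row: passing from $\lambda$ to $(\lambda_2,\dots,\lambda_\ell)$ removes the largest element of the $\beta$-set and lowers every remaining element by one, and one checks the resulting change in $f$ equals $\lambda_1$; but the direct computation above is shorter and self-contained, so that is the version I would present.
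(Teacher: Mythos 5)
Your proof is correct: the identity $h_{i1}=\lambda_i+\ell-i$, the strict monotonicity giving $|S|=\ell$, and the cancellation of the triangular numbers constitute the standard argument for this classical fact. The paper itself states this lemma without proof, citing \cite{Olsson, Xiong1}, and your computation is exactly the argument found in those references, so there is nothing to compare beyond noting that you have supplied the omitted details correctly.
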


\par From now on we will focus on core partitions. The $\beta$-set of an $s$-core partition has the following property.

\begin{lemma}[see \cite{Olsson, Xiong1}]
Let $s$ be a positive integer and $S$ be the $\beta$-set of a partition $\lambda$. Then $\lambda$ is an~$s$-core partition, if and only if for any $x\in S$ and $x\ge s$, we have $x-s\in S$.
\end{lemma}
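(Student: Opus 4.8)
The plan is to reduce the statement, via the classical bead-and-gap description of hook lengths, to an elementary congruence argument on the set $S$. Write $S=\{b_1>b_2>\cdots>b_\ell\}$ with $b_i=h_{i1}=\lambda_i+\ell-i$, and call a nonnegative integer a \emph{gap} if it does not lie in $S$. The single fact I would establish first is the standard correspondence: the multiset of all hook lengths of $\lambda$ equals $\{\,b-a : b\in S,\ a\notin S,\ 0\le a<b\,\}$, each value arising from exactly one box. Granting this, $\lambda$ \emph{fails} to be an $s$-core precisely when there exist a bead $b\in S$ and a gap $a<b$ with $s\mid(b-a)$, so the whole lemma becomes a statement purely about $S$ and the residues of its elements modulo $s$.

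To prove the correspondence I would use the boundary lattice path of the Young diagram of $\lambda$: reading this path from the bottom-left to the top-right records a sequence of vertical and horizontal unit steps, and after the standard normalization the positions of the vertical steps are exactly the elements of $S$ while the positions of the horizontal steps are exactly the gaps. The box lying in the row terminated by the vertical step at position $b$ and the column terminated by the horizontal step at position $a<b$ then has hook length exactly $b-a$, and this assignment is a bijection between boxes of $\lambda$ and such pairs $(b,a)$. A quick sanity check: taking $a=0$ (which is always a gap, since $\min S=\lambda_\ell\ge 1$) recovers $h_{i1}=b_i$, and counting the gaps below $b_i$ gives $b_i-(\ell-i)=\lambda_i$, matching the number of boxes in row $i$.

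It then remains to show that the condition ``there is no pair $b\in S$, $a\notin S$ with $0\le a<b$ and $s\mid(b-a)$'' is equivalent to ``for every $x\in S$ with $x\ge s$ one has $x-s\in S$.'' The direction $(\Leftarrow)$ is a short downward induction: given a hypothetical bead $b$ and gap $a=b-ks$ with $k\ge 1$, the value $b-js=a+(k-j)s$ is at least $s$ for each $0\le j\le k-1$, so applying the hypothesis successively to $b,\,b-s,\,\ldots$ forces $b-s,\ b-2s,\ \ldots,\ b-ks=a\in S$, contradicting that $a$ is a gap. The direction $(\Rightarrow)$ is immediate: if some $x\in S$ with $x\ge s$ had $x-s\notin S$, then $(b,a)=(x,x-s)$ would be exactly such a forbidden pair.

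The genuinely substantive step is the first one, establishing the hook-length/bead-gap correspondence, since the congruence argument above is completely elementary. As this correspondence is classical (it is precisely the content cited from \cite{Olsson, Xiong1}), I would either invoke it directly or include the short boundary-path verification indicated; everything else is bookkeeping.
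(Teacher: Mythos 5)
Your proof is correct. Note that the paper does not prove this lemma at all --- it is quoted from \cite{Olsson, Xiong1} as a known fact --- and your argument (the bead--gap description of the hook-length multiset as $\{b-a: b\in S,\ a\notin S,\ 0\le a<b\}$, followed by the downward induction subtracting $s$) is precisely the standard proof found in those references, so there is nothing to fault and no divergence to report.
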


Next, we recall the definition of $s$-abacus. 

\begin{definition}[s-abacus  (see \cite{James,NS2})]
Let $X\subseteq   \mathbb{N}$ be a set of positive integers and $s$ be a positive integer. Then the $s$-abacus diagram 
$S$
of $X$ is defined to be the set 
$$
S:=\{(i,j): ~~ i\ge 0, ~~ 0\leq j\leq s-1, ~~  si+j\in X\}.
$$
\end{definition}

\begin{remark}
Usually, we use a diagram like Figure~\ref{2,4,9} to represent an $s$-abacus. When $(i,j)\in S$, the number in the $i$-th row and $j$-th column is circled. We begin with Row $0$ at the bottom and Column $0$ at the far left.

\end{remark}

\begin{example}
Figure \ref{2,4,9} shows the 5-abacus diagram of $\{2,4,9\}$, which is equal to 
$$
S=\{(0,2),~(0,4),~(1,4)\}.
$$
\begin{figure}[h!]
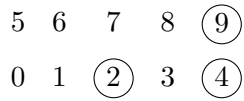

    \centering
    \[ \small
    \begin{array}{ccccc}
    5 & 6 & 7 & 8 & \encircle{9}  \\
    0 & 1 & \encircle{2} & 3 & \encircle{4} 
    \end{array}
    \]
    \caption{The 5-abacus diagram of \{2,4,9\}.}
    \label{2,4,9}
\end{figure}
\end{example}

\begin{remark}
The $s$-abacus is just a graphic representation of the $\beta$-set, 
thus we will use the terms $\beta$-set and $s$-abacus interchangeably. When we deal with the $ms$-abacus, we will write $is+j$ instead of $(i,j)$ for $i\ge 0$ and $0\leq j\leq s-1$. When we mention a certain row of a given set, we mean the corresponding row in the $ms$-abacus of the set.
\end{remark}

\section{The $\boldsymbol{\beta}$-sets of $\boldsymbol{(s,ms-1,ms+1)}$-core partitions}

In this section, we study the $\beta$-sets of $(s,ms-1,ms+1)$-core partitions. First, we recall the definition of s-pyramids.

\begin{definition}[s-pyramid (see \cite{NS2})]
Let $S$ be an $s$-abacus. Assume that $a$ and $b$ are two integers satisfying $0\le a<b\le s-1$. We call $S$ an s-pyramid with base $[a,b]$, if 
$$
S=\{      (i,j): ~~i\geq 0,~~~ a+i\leq j\leq b-i      \}.
$$
\end{definition}

\par For positive integers $n$, $a$ and $b$, let $[n]:=\{1,2,\ldots,n\}$ and $[a,b]:=\{a,a+1,a+2,\ldots,b-2,b-1,b\}$. The following theorem characterizes the shape of the $\beta$-set of an $(s,ms-1,ms+1)$-core partition.

\begin{theorem}[see Lemmas $1.6$ and $5.13$ of \cite{NS2}] \label{L_m(s)}
For given positive integers $s$ and $m$, let $P_k$ be an ms-pyramid with base $[(k-1)s+1, ks-1]$ for $1\leq k\leq m-1$ and $P_m$ be an ms-pyramid with base $[(m-1)s+1,ms-2]$. Furthermore, let $\mathcal {L}_m(s)=\cup_{k=1}^{m} P_k$ be the union of~$P_1, P_2, \ldots, P_m$. Then for any set $S$, $S$ is the $ms$-abacus of the $\beta$-set of an~$(s,ms-1,ms+1)$-core partition, if and only if it satisfies: 
\begin{enumerate}
    \item[(1)] $S\subseteq  \mathcal {L}_m(s)$;
    \item[(2)] If $(i,j)\in S$ and $i\ge 1$, then $(i-1,j-1)\in S$ and $(i-1,j+1)\in S$;
    \item[(3)] If $(i,j)\in S$ and $j\ge s$, then $(i,j-s)\in S$;
    \item[(4)] If $(i,j)\in S$, $i\ge 1$ and $j<s$, then $(i-1,j+(m-1)s)\in S$.
\end{enumerate}
\end{theorem}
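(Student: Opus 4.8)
The plan is to prove the two-sided characterization by translating the three simultaneous core conditions directly onto the $ms$-abacus. By the two lemmas of Section~2, a partition $\lambda$ is an $(s,ms-1,ms+1)$-core if and only if its $\beta$-set $S$ is closed under subtraction by each of $s$, $ms-1$ and $ms+1$ (that is, $x\in S$ and $x\ge d$ imply $x-d\in S$, for each $d\in\{s,ms-1,ms+1\}$), and moreover $0\notin S$, since a $\beta$-set consists of positive first-column hook lengths. Writing each element of $S$ as $N=ms\cdot i+j$ with $0\le j\le ms-1$, so that $N$ occupies position $(i,j)$ of the $ms$-abacus, I would compute the effect of each of the three subtractions on the pair $(i,j)$ and read off exactly which abacus moves they force.

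First I would dispose of the $s$-core condition. Subtracting $s$ from $N=ms\cdot i+j$ gives $(i,j-s)$ when $j\ge s$ and $(i-1,j+(m-1)s)$ when $j<s$ and $i\ge 1$ (and is vacuous when $i=0$, $j<s$); these are precisely conditions~(3) and~(4), so the $s$-core condition is equivalent to (3)--(4). Next I would identify the forbidden columns. If some column $\equiv 0\pmod s$ were occupied, repeatedly subtracting $s$ would force $0\in S$, a contradiction; hence columns $0,s,2s,\dots,(m-1)s$ are empty. Then, if column $ms-1$ were occupied at $(i,ms-1)$, subtracting $ms-1$ would land in column $0$ (or force $0\in S$ when $i=0$), again impossible; hence column $ms-1$ is empty as well. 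With these exclusions in hand, any $(i,j)\in S$ with $i\ge 1$ satisfies $1\le j\le ms-2$, so $N\ge ms+1$ and $j+1\le ms-1$; subtracting $ms+1$ then yields $(i-1,j-1)\in S$ and subtracting $ms-1$ yields $(i-1,j+1)\in S$, which is exactly condition~(2).

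The main obstacle is the containment~(1), $S\subseteq\mathcal{L}_m(s)$, where I would argue by ``shadowing'' a position down to row~$0$ using condition~(2). Iterating (2) from a fixed $(i,j)\in S$ forces $(i-t,c)\in S$ whenever $0\le t\le i$, $|c-j|\le t$, and $c\equiv j+t\pmod 2$; collecting these over all $t$ shows that every integer column $c$ in the interval $[j-i,\,j+i]$ is occupied in some row (namely row $i-|c-j|$), hence non-forbidden. Since the forbidden columns $0,s,\dots,(m-1)s,ms-1$ partition row~$0$ into the $m$ gaps that are exactly the bases of $P_1,\dots,P_m$, the interval $[j-i,j+i]$ must lie inside a single gap, say the $k$-th. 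The inequalities $(k-1)s+1\le j-i$ and $j+i\le ks-1$ (or $j+i\le ms-2$ when $k=m$) are precisely the defining inequalities of the pyramid $P_k$, so $(i,j)\in P_k\subseteq\mathcal{L}_m(s)$. I expect this interval-filling step to be the crux, since it is what upgrades the purely local move~(2) into the global pyramidal shape.

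Finally I would verify the converse, that (1)--(4) imply the three core conditions, which is the routine direction. Conditions (3)--(4) return the $s$-core condition verbatim, while condition~(1) guarantees that every occupied $(i,j)$ has $1\le j\le ms-2$, so the potentially problematic boundary cases (a subtraction leaving the abacus or landing in a forbidden column) never arise, and condition~(2) then delivers exactly the positions $N-(ms+1)$ and $N-(ms-1)$ required by the $(ms+1)$- and $(ms-1)$-core conditions. Assembling the equivalences (the $s$-core condition $\Leftrightarrow$ (3)--(4), and the $(ms\pm 1)$-core conditions $\Leftrightarrow$ (2)) under the forbidden-column constraints then yields the claimed characterization.
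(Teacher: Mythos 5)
Your proof is correct. Note that the paper itself offers no proof of this statement: it is imported verbatim from Nath and Sellers (Lemmas 1.6 and 5.13 of \cite{NS2}), so there is no internal argument to compare against. Your reconstruction is the natural one and, as far as I can check, complete: the translation of the $s$-core closure into moves (3)--(4) on the $ms$-abacus is exact (the case $j<s$, $i=0$ being vacuous); the emptiness of columns $0,s,\dots,(m-1)s$ follows from iterated subtraction of $s$ forcing $0\in S$, and the emptiness of column $ms-1$ then follows from one subtraction of $ms-1$ landing in column $0$ (or at $0$ itself when $i=0$); with those columns excluded, every occupied $(i,j)$ with $i\ge 1$ has $1\le j\le ms-2$, so $N\ge ms+1$ and the two subtractions $N-(ms\pm 1)$ land at the legal positions $(i-1,j\mp 1)$, giving (2). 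The crux, as you identify, is the interval-filling step for (1): iterating (2) places a bead in every column of $[j-i,\,j+i]$, so that interval avoids all forbidden columns and hence sits inside a single gap $[(k-1)s+1,\,ks-1]$ (or $[(m-1)s+1,\,ms-2]$), and the resulting inequalities $a\le j-i$, $j+i\le b$ are exactly the defining inequalities of the pyramid $P_k$; this also rules out the interval escaping $[0,ms-1]$, since columns $0$ and $ms-1$ are themselves forbidden. The converse direction is routine as you say, with condition (1) guaranteeing $0\notin S$ and keeping every subtraction inside the abacus. The one point worth making explicit in a written version is that any finite set of distinct positive integers is the $\beta$-set of a unique partition, so that in the converse direction the set $S$ really does come from a partition before you verify the three core conditions.
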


\begin{example}
(1) When $m=3$ and $s=5$, we have $\mathcal {L}_3(5)=P_1\cup P_2 \cup P_3$ where $P_1=\{1,2,3,4,17,18\}$, $P_2=\{6,7,8,9,22,23\}$ and $P_3=\{11,12,13,27\}$. Figure \ref{L_3(5)-2} shows $\mathcal {L}_3(5)$. 

(2) When $m=3$ and $s=6$, we have $\mathcal {L}_3(6)=P_1\cup P_2 \cup P_3$ where the three $18$-pyramids are $P_1=\{1,2,3,4,5,20,21,22,29\}$, $P_2=\{7,8,9,10,11,26,27,28,45\}$ and $P_3=\{13,14,15,16,32,33\}$. Figure \ref{L_3(6)-2} shows $\mathcal {L}_3(6)$. 

 (3) Consider the $15$-abacus of the $\beta$-set $S$ of a $(5,14,16)$-core partition. Assume that $23,27\in S$. By Theorem \ref{L_m(s)}, we have $11,13,22\in S$ since $27\in S$; and $7,9,18\in S$ since $23\in S$. Repeating this process, we can deduce that $\mathcal{L}_3(5)\subseteq S$. Therefore, $S=\mathcal{L}_3(5)$.


\end{example}

\begin{lemma}\label{conjugate}
    For any positive integer $s$ and $m$, the partition with the beta set $\mathcal{L}_m(s)$ is not self-conjugate.
\end{lemma}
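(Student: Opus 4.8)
The plan is to use a single asymmetry invariant: the number of parts of a partition versus its largest part. For any partition $\lambda$ the largest part of the conjugate $\lambda'$ equals the number of parts $\ell(\lambda)$ of $\lambda$ (the first row of the transposed diagram is the first column of the original); consequently, if $\lambda$ is self-conjugate then $\lambda_1 = \lambda_1' = \ell(\lambda)$. It therefore suffices to prove that the partition $\lambda$ whose $\beta$-set is $S := \mathcal{L}_m(s)$ satisfies $\lambda_1 \ne \ell(\lambda)$. Since $\beta(\lambda) = S$ we have $\ell(\lambda) = |S|$ and $\lambda_1 = \max(S) - |S| + 1$ (the latter because $\lambda_1$ counts the gaps in $\{0,1,\dots,\max(S)\}\setminus S$), so the equality $\lambda_1 = \ell(\lambda)$ is equivalent to $\max(S) = 2|S| - 1$. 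I would instead establish the strict inequality $\max(S) < 2|S| - 1$.

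First I would compute $|S|$. By Theorem~\ref{L_m(s)}, $S = \bigcup_{k=1}^{m} P_k$ is a disjoint union of $ms$-pyramids, where for $1 \le k \le m-1$ the base $[(k-1)s+1,\,ks-1]$ has width $s-1$ and $P_m$ has base $[(m-1)s+1,\,ms-2]$ of width $s-2$. Summing the successively shrinking rows of each pyramid gives $|S|$ as a closed form whose shape is governed only by the parity of the base widths; this parity is exactly what produces the two cases $s = 2t-1$ and $s = 2t-2$ of Theorem~\ref{conj}.

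Next I would compute $\max(S)$. As a position in row $i$ and column $j$ of the $ms$-abacus represents the integer $ims + j$, the maximum of $S$ is attained at the apex of the pyramid reaching the highest row, and among those apexes at the rightmost column. All of $P_1,\dots,P_{m-1}$ reach the same top row $\lfloor (s-2)/2\rfloor$ (so among them $P_{m-1}$ has the largest apex), while $P_m$ reaches row $\lfloor (s-3)/2\rfloor$; a short comparison of the apexes of $P_{m-1}$ and $P_m$ then pins down $\max(S)$ in each parity.

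Combining the two computations, I expect the gap to simplify to $2|S| - 1 - \max(S) = (s-1)(m-1)/2$ when $s$ is odd and $(s-2)/2$ when $s$ is even, which is strictly positive once $s \ge 3$ (in the odd case this uses $m \ge 2$). Hence $\ell(\lambda) > \lambda_1$ and $\lambda$ cannot be self-conjugate. The main obstacle is purely the bookkeeping: evaluating the pyramid sizes correctly and identifying the largest apex in each parity class, so that the two parities must be tracked in parallel with the case split of the main theorem; the very small values of $s$ are then checked directly.
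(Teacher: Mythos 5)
Your proposal is correct and follows essentially the same route as the paper: both reduce self-conjugacy to the identity $\max(\mathcal{L}_m(s)) = 2|\mathcal{L}_m(s)|-1$ (via $\lambda_1 = \ell(\lambda)$), then compute $|\mathcal{L}_m(s)|$ and $\max(\mathcal{L}_m(s))$ from the pyramid decomposition and exhibit a strictly positive gap. Your predicted gaps agree with the paper's (for $s=2t-1$ the paper finds $2|\mathcal{L}_m(s)|-1-\max(\mathcal{L}_m(s)) = (t-1)(m-1) = (s-1)(m-1)/2$), and your observation that positivity needs $s\ge 3$ (and $m\ge 2$) matches the standing assumptions stated after Theorem~\ref{conj}.
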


\begin{proof}
    We prove it by contradiction. Otherwise, the partition $\la=(\la_1,\la_2,\cdots,\la_u)$ that $\mathcal{L}_m(s)$ corresponds to is self-conjugate. Then $\la_1=u$, here $u=|\mathcal{L}_m(s)|$. Thus we have $\max\mathcal{L}_m(s)=2|\mathcal{L}_m(s)|-1$.

    When $s=2t-1$ is odd, it's obvious that
    \begin{equation*}
        |\mathcal{L}_m(s)|=\sum_{i=1}^m |P_k|
        \sum_{i=1}^{m-1} (2+4+\cdots+(2t-2))+(1+3+\cdots+(2t-3))
        =(t-1)(mt-1).
    \end{equation*}
    On the other hand, $\max\mathcal{L}_m(s)$ equals
    \begin{equation*}
        m(t-1)(2t-1)-t=2|\mathcal{L}_m(s)|-1-(t-1)(m-1)<2|\mathcal{L}_m(s)|-1,
    \end{equation*}
    a contradiction.

    When $s=2t-2$ is even, we can similarly prove that $\max\mathcal{L}_m(s)<2|\mathcal{L}_m(s)|-1$. A contradiction.
\end{proof}

\section{Generalized-$\beta$-sets}

Now we generalize the concept of the $\beta$-set of an $(s,ms-1,ms+1)$-core partition to the generalized-$\beta$-set. In the following sections, we will use this more general concept instead of the original $\beta$-sets studied in previous papers (see \cite{James,NS2, Olsson, Xiong1}). 
To better formulate the concept of a generalized-$\beta$-set, we introduce the following notation. For nonempty finite set $S\subseteq[\lceil(s-1)/2\rceil ms-1]=\{ x\in \mathbb{N}: 1\leq x \leq  \lceil(s-1)/2\rceil ms-1 \}$, define $$t(S):=\min\{i \geq 1: S\subseteq[0,ims-1]\}.$$ For $1\leq i\leq \lceil(s-1)/2\rceil m$, define
\[
\mathcal{B}_i(S):=S\cap [(i-1)s, is-1];  
\]
\[
a_i(S):=|S\cap [(i-1)s, is-1]| ;
\]
\[
n_i(S):=\max~ \{x~ \text{mod}~s: x\in  \mathcal{B}_i(S) \} ~~\text{(we set $n_i(S)=0$ if $\mathcal{B}_i(S)=\emptyset$)}.
\]
For $1\leq k\leq \lceil(s-1)/2\rceil$, define
\[
A_k(S):=\sum_{j=1}^{m}a_{(k-1)m+j}(S).
\]
When there is only one set $S$, we write $t,\mathcal{B}_i,a_i,n_i$ instead of $t(S),\mathcal{B}_i(S),a_i(S),n_i(S)$ for simplicity. When we deal with two sets $S,S'$, the same is for $S$ and we use $t',\mathcal{B}_i',a_i',n_i'$ instead of $t(S'),\mathcal{B}_i(S'),a_i(S'),n_i(S')$. 
The notation above is inspired by Example \ref{ex:1.2}. Let~$S$ be the $\beta$-set of a maximal $(s,ms-1,ms+1)$-core partition. Observing Example \ref{ex:1.2}, we find that $S$ can be divided into several parts, namely $\mathcal{B}_i=S\cap[(i-1)s,is-1]$, each of which consists of consecutive integers. The shape of these consecutive integers is determined by the cardinality of $\mathcal{B}_i$, i.e. $a_i$ and the maximal element, which can be represented by its modulo mod $s$, i.e.~$n_i$. The notation $t$ is just the number of non-zero rows that $S$ has in $\mathcal{L}_m(s)$.

\begin{example}
Let $m=3$, $s=5$ and $S=\mathcal{L}_3(5)$. Then $t=2$. We have (see Figure \ref{L_3(5)-2})
\[
\mathcal{B}_1=\{1,2,3,4\},a_1=4,n_1=4;
\]
\[
\mathcal{B}_2=\{6,7,8,9\},a_2=4,n_2=4;
\]
\[
\mathcal{B}_3=\{11,12,13\},a_3=3,n_3=3;
\]
\[
\mathcal{B}_4=\{17,18\},a_4=2,n_4=3;
\]
\[
\mathcal{B}_5=\{22,23\},a_5=2,n_5=3;
\]
\[
\mathcal{B}_6=\{27\},a_6=1,n_6=2;
\]
\[
A_1=a_1+a_2+a_3=11,A_2=a_4+a_5+a_6=5.
\]
\end{example}


Next, we give the definition of generalized-$\beta$-sets.
\begin{definition}[generalized-$\beta$-set] \label{def:4.2}
Let $S\subseteq  \mathcal {L}_m(s)$ be a nonempty set. Then, $S$ is called a \emph{generalized-$\beta$-set}, if it satisfies the following conditions:
\begin{enumerate}
\item[(1)]  If $1\leq i<\lceil(s-1)/2\rceil m$ and $a_i=0$, then $a_{i+1}=0$;
    \item[(2)]  If $1\leq i < tm$ and $m\nmid i$, then $a_i\ge a_{i+1}$;
    \item[(3)]  If $1\leq i\leq (t-1)m$ and $a_{i+m}>0$, then $a_{i+m}\leq a_{i}-2$;
    \item[(4)]  If $1\leq i < tm$, $m\nmid i$ and $a_{i+1}>0$, then $n_i \ge n_{i+1}$;
    \item[(5)]  If $1\leq i < tm$, $m\nmid i$ and $a_i=a_{i+1}>0$, then $n_i=n_{i+1}$;
    \item[(6)] $a_{(t-1)m}\ge a_{(t-1)m+1}$.
\end{enumerate}
\end{definition}

\begin{example}
When $m=3$ and $s=5$, $\mathcal{L}_3(5)$ is a generalized-$\beta$-set. In fact, it is obvious that (1) holds. Since $(a_1,a_2,a_3,a_4,a_5,a_6)=(4,4,3,2,2,1)$ is non-increasing, (2) and (6) also hold. Since $a_4=a_1-2$, $a_5=a_2-2$ and $a_6=a_3-2$, (3) is true. Since $(n_1,n_2,n_3,n_4,n_5,n_6)=(4,4,3,3,3,2)$ is non-increasing, (4) is true. If $a_i=a_{i+1}$, then $i=1$ or $i=4$. Since $n_1=n_2$ and $n_4=n_5$, (5) is also true.
\end{example}


The following lemma is the reason why we use the term \emph{generalized-$\beta$-set}.

\begin{lemma}\label{if}
Let $\lambda$ be an $(s,ms-1,ms+1)$-core partition. Then the $\beta$-set $S:=\beta(\lambda)$ of $\la$ is a generalized-$\beta$-set.
\end{lemma}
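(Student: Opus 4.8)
The plan is to verify the six defining conditions of a generalized-$\beta$-set (Definition \ref{def:4.2}) directly from the characterization of $\beta$-sets of $(s,ms-1,ms+1)$-core partitions given in Theorem \ref{L_m(s)}. Condition (1) of the definition says $S\subseteq\mathcal{L}_m(s)$, which is exactly condition (1) of Theorem \ref{L_m(s)}, so that is immediate. The remaining conditions concern the relations between the cardinalities $a_i$ and the maximal residues $n_i$ of consecutive blocks $\mathcal{B}_i=S\cap[(i-1)s,is-1]$, and these should all follow by unwinding how conditions (2)--(4) of Theorem \ref{L_m(s)} act on the rows of the $ms$-abacus.

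First I would set up the translation between the abacus coordinates and the block data. A block $\mathcal{B}_i$ sits inside a single row of the $ms$-abacus; writing an element of $S$ as $is+j$ with $0\le j\le ms-1$, the integer $j$ ranges over a full abacus row, and the blocks $\mathcal{B}_{(k-1)m+1},\dots,\mathcal{B}_{km}$ correspond to the successive length-$s$ windows inside row $k-1$. The key structural fact, coming from condition (2) of Theorem \ref{L_m(s)} (if $(i,j)\in S$ and $i\ge1$ then $(i-1,j\pm1)\in S$), is that within each window the occupied entries form a \emph{contiguous} run, and as one moves up a row each pyramid's run loses one entry on each side; this is precisely the pyramid shape. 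From this, $a_i$ records the length of the run in block $i$ and $n_i$ records the residue of its rightmost element.

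With this dictionary in hand, each numbered condition becomes a routine consequence. The emptiness-propagation condition (1) of the definition follows because an empty window forces the next window up to be empty by the pyramid nesting. The monotonicity $a_i\ge a_{i+1}$ for $m\nmid i$ (condition (2)) and the analogous boundary statement (condition (6)) reflect that within a single abacus row the pyramid runs are non-increasing in length as one moves right, while the $m\nmid i$ hypothesis avoids comparing across the row break where a new pyramid of larger base begins. The drop-by-two condition (3), $a_{i+m}\le a_i-2$, is exactly the statement that moving up one full row shortens each pyramid's run by $2$, which is forced by condition (2) of Theorem \ref{L_m(s)} applied to both boundary beads of the run together with the requirement that the run remain contiguous. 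The residue conditions (4) and (5) likewise track the rightmost element: because each run is an interval of consecutive integers, $n_i$ is determined by the run's position, and the nesting of the pyramids forces $n_i\ge n_{i+1}$, with equality precisely when the two runs have equal length and are vertically aligned.

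The main obstacle I expect is bookkeeping at the row boundaries and at the truncated top pyramid $P_m$, whose base $[(m-1)s+1,ms-2]$ is one shorter than the others; one must check that conditions (4) of Theorem \ref{L_m(s)} (the wrap-around $(i,j)\mapsto(i-1,j+(m-1)s)$) and (3) (horizontal shift by $s$) are correctly accounted for so that the block data $a_i,n_i$ genuinely satisfy the inequalities without off-by-one errors, especially when distinguishing the cases $m\mid i$ and $m\nmid i$. Apart from this careful casework, the proof is a direct verification rather than requiring any new idea.
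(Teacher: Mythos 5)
Your overall strategy (verifying the six conditions of Definition \ref{def:4.2} directly from the characterization in Theorem \ref{L_m(s)}) matches the paper's, but your argument rests on a structural claim that is false: the blocks $\mathcal{B}_i=S\cap[(i-1)s,is-1]$ of the $\beta$-set of an $(s,ms-1,ms+1)$-core partition need \emph{not} be contiguous runs. For example, with $s=7$ and $m=2$, the set $S=\{1,2,3,4,5,9,11,16,18\}$ satisfies all four conditions of Theorem \ref{L_m(s)}, yet $\mathcal{B}_3=\{16,18\}$ is not an interval. (Contiguity is only established much later, in Lemma \ref{initial}, and only for maximizers of $f$.) This matters most for Condition (3), the drop-by-two inequality $a_{i+m}\le a_i-2$, which you derive from ``condition (2) of Theorem \ref{L_m(s)} applied to both boundary beads of the run together with the requirement that the run remain contiguous.'' Without contiguity, applying condition (2) of the theorem to each bead $x$ of $\mathcal{B}_{i+m}$ only forces the beads $x-ms\pm1$ below, and when $\mathcal{B}_{i+m}$ is an arithmetic progression of difference $2$ these images overlap so heavily that you obtain only $a_{i+m}+1$ forced beads in $\mathcal{B}_i$, not $a_{i+m}+2$ (in the example, $\{16,18\}$ yields only $\{1,3,5\}$).

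The missing ingredient, which is exactly what the paper's proof supplies, is the $s$-core property: conditions (3)--(4) of Theorem \ref{L_m(s)} applied $m$ times show that $x-ms\in S$ for every $x\in\mathcal{B}_{i+m}$, so writing $\mathcal{B}_{i+m}=\{x_1<\cdots<x_k\}$ one gets the $k+2$ distinct elements $x_1-ms-1,\dots,x_k-ms-1,\,x_k-ms,\,x_k-ms+1$ of $\mathcal{B}_i$. Your treatment of Conditions (4) and (5) also leans on contiguity, but those are easily repaired without it: the map $x\mapsto x-s$ injects $\mathcal{B}_{i+1}$ into $\mathcal{B}_i$, which gives $a_i\ge a_{i+1}$ and $n_i\ge n_{i+1}$ directly, and when $a_i=a_{i+1}$ it is a bijection, forcing $n_i=n_{i+1}$. (A minor further slip: Condition (1) of Definition \ref{def:4.2} is the emptiness-propagation statement, not the containment $S\subseteq\mathcal{L}_m(s)$, which is a standing hypothesis; you do address the former correctly later.) So the plan is sound and most conditions survive, but as written the proof of Condition (3) does not go through.
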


\begin{proof}

\par Recall that $\mathcal{B}_i=S\cap [(i-1)s, is-1]$ for $1 \leq i \leq tm$. For $i\ge 2$, assume that $\mathcal{B}_i\neq \emptyset$. Set $\mathcal{B}_i=\{x_1,x_2,\cdots,x_k\}$. By Theorem \ref{L_m(s)}, $x_i-s\in \mathcal{B}_{i-1}$. Therefore, 
\[
a_{i-1}= |\mathcal{B}_{i-1}|\ge k=|\mathcal{B}_i|=a_i
\] and
\[
n_{i-1}=\max \mathcal{B}_{i-1}(\text{mod}~s)\ge \max (\mathcal{B}_i-s)(\text{mod}~s)=\max \mathcal{B}_i(\text{mod}~s)=n_i.
\]Therefore, Conditions (1), (2), (4) and (6) in Definition \ref{def:4.2} are satisfied. If $a_{i-1}=a_i$, then $\mathcal{B}_{i-1}=\{x_1-s,\cdots,x_k-s\}$, thus $n_{i-1}=n_i$. Therefore, Condition (5) in Definition \ref{def:4.2} is satisfied.
For $m+1\leq i\leq tm$, notice that $\{x_1-ms-1,x_2-ms-1,\cdots,x_k-ms-1,x_k-ms,x_k-ms+1\} \subseteq   \mathcal{B}_{i-m}$. Therefore,
\[
|\mathcal{B}_{i-m}| \ge k+2 = |\mathcal{B}_i|+2.
\] 
Thus Condition (3) in Definition \ref{def:4.2} is also satisfied.
\end{proof}

\begin{remark}
A generalized-$\beta$-set is not necessarily the $\beta$-set of an $(s,ms-1,ms+1)$-core partition. Let $s=6$ and $m=3$. See Figure \ref{fig:nobe} for example, $S=\{1,2,3,4,7,8,9,10,13,14,\\ 15,16,21,22,27,33\}$ is not the $\beta$-set of a $(6,17,19)$-core partition, since $22-17=5\not\in S$, which violates Condition (2) in Theorem \ref{L_m(s)}. Observe that $(a_1,a_2,a_3,a_4,a_5,a_6)=(4,4,4,2,1,1)$ and $(n_1,n_2,n_3,n_4,n_5,n_6)=(4,4,4,4,3,3)$. We verify that $S$ is indeed a generalized-$\beta$-set. Obviously (1) holds. Both sequences are non-increasing, thus (2)(4)(6) hold. Since $a_1 - a_4 = 2$ and $a_2 - a_5 = a_3 - a_6 = 3$, (3) holds. Since $n_1 = n_2 = n_3$ and $n_5 = n_6$, (5) holds.
\end{remark}

\begin{figure}[h!]
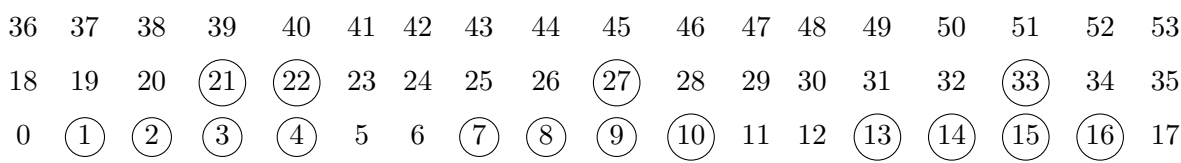

    \centering
    \[ \small
    \begin{array}{cccccccccccccccccc}
    36 & 37 & 38 & 39 & 40 & 41 & 42 & 43 & 44 & 45 & 46 & 47 & 48 & 49 & 50 & 51 & 52 & 53 \\
    18 & 19 & 20 & \encircle{21} & \encircle{22} & 23 & 24 & 25 & 26 & \encircle{27} & 28 & 29 & 30 & 31 & 32 & \encircle{33} & 34 & 35\\
    0 & \encircle{1} & \encircle{2} & \encircle{3} & \encircle{4} & 5 & 6 & \encircle{7} & \encircle{8} & \encircle{9} & \encircle{10} & 11 & 12 & \encircle{13} & \encircle{14} & \encircle{15} & \encircle{16} & 17\\
    
    \end{array}
    \]
    \caption{A generalized-$\beta$-set which is not a $\beta$-set.}
    \label{fig:nobe}
    \end{figure}


\par The following proposition gives an upper bound of $n_i$.

\begin{proposition} \label{upper bound}
Let $S\subseteq  \mathcal {L}_m(s)$ be a generalized-$\beta$-set. For $0 \leq i \leq t-1 $ and $1\leq k \leq m$, we have
\[
n_{im+k}\leq
\begin{cases}
s-i-2 \ \ if \ a_{im+k}=a_{(i+1)m};\\
s-i-1 \ \ if \ a_{im+k}>a_{(i+1)m}.
\end{cases}
\]
\end{proposition}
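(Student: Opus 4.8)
The plan is to reduce the statement to two ingredients: a purely geometric bound coming from the shape of $\mathcal{L}_m(s)$, and a propagation argument that uses the defining conditions of a generalized-$\beta$-set to sharpen that bound by one when $a_{im+k}=a_{(i+1)m}$.

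First I would record the exact residue profile of each pyramid. Unwinding the definition of an $ms$-pyramid together with the bases prescribed in Theorem~\ref{L_m(s)}, the elements of $P_k$ lying in row $i$ of the $ms$-abacus occupy exactly the columns whose residues mod $s$ lie in $[i+1,\,s-1-i]$ when $1\le k\le m-1$, and in $[i+1,\,s-2-i]$ when $k=m$ (the shift by one reflecting that $P_m$ has base ending at $ms-2$ instead of $ms-1$). Since $S\subseteq\mathcal{L}_m(s)$ gives $\mathcal{B}_{im+k}(S)\subseteq\mathcal{B}_{im+k}(\mathcal{L}_m(s))=P_k\cap(\text{row }i)$, taking maxima yields the raw bounds $n_{im+k}\le s-1-i$ for $k<m$ and $n_{im+m}\le s-2-i$. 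When $\mathcal{B}_{im+k}=\emptyset$ we have $n_{im+k}=0$, and one checks that $i\le t-1$ forces $i\le\lfloor(s-2)/2\rfloor$ (as $\mathcal{L}_m(s)$ has no nonempty row beyond that index), so $s-i-2\ge0$ and every inequality below holds trivially in that degenerate case.

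Next I would dispatch the case $a_{im+k}>a_{(i+1)m}$. Here $k=m$ is impossible, since $(i+1)m=im+m$ is the same block as $im+k$ when $k=m$; thus $k<m$ and the raw bound $n_{im+k}\le s-1-i$ is already exactly the claimed inequality. The substantive case is $a_{im+k}=a_{(i+1)m}$ with $a_{im+k}>0$. I would first invoke condition~(2) of Definition~\ref{def:4.2} at the indices $im+1,\dots,im+m-1$ (each not divisible by $m$ and strictly below $tm$, since $i\le t-1$) to conclude that the block sizes are non-increasing along row $i$, namely $a_{im+1}\ge a_{im+2}\ge\cdots\ge a_{im+m}$. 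Combined with the hypothesis $a_{im+k}=a_{im+m}$, this pins down $a_{im+k}=a_{im+k+1}=\cdots=a_{im+m}$, all equal and positive. Now condition~(5) applies to each consecutive pair among these indices, giving $n_{im+k}=n_{im+k+1}=\cdots=n_{im+m}$; transferring the raw bound for the last block, $n_{im+m}\le s-2-i$, then yields $n_{im+k}\le s-i-2$, as required (and the case $k=m$ is just the last block itself).

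The main point, and the only place where anything nonformal happens, is the asymmetry in Theorem~\ref{L_m(s)}: the last pyramid $P_m$ is one column narrower than $P_1,\dots,P_{m-1}$, which is precisely the source of the gap between $s-i-1$ and $s-i-2$. The real work is showing that this deficiency of the last block of a row propagates leftward to any earlier block of the \emph{same} size, and this is exactly what conditions~(2) and~(5) encode; stitching them together is the crux. Everything else—the residue computation inside the pyramids and the emptiness bookkeeping—is routine.
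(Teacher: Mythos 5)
Your proof is correct and follows essentially the same route as the paper's: the raw bounds $n_{im+k}\le s-1-i$ for $k<m$ and $n_{im+m}\le s-2-i$ coming from $S\subseteq\mathcal{L}_m(s)$, sharpened by chaining Conditions (2) and (5) of Definition~\ref{def:4.2} across the blocks $im+k,\dots,im+m$ when $a_{im+k}=a_{(i+1)m}$. The paper's two-line proof cites only Condition (5); your explicit invocation of Condition (2) to force $a_{im+k}=\cdots=a_{im+m}$ before applying Condition (5), and your handling of the empty-block case, merely fill in details the paper leaves implicit.
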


\begin{proof}
Since $S\subseteq\mathcal{L}_m(s)$, we have
\[
n_{im+k}\leq
\begin{cases}
s-i-2 \ \ \text{if} \ k=m;\\
s-i-1 \ \ \text{if} \ 1\leq k<m.
\end{cases}
\]Then by Condition (5) in Definition \ref{def:4.2}, we obtain the result.
\end{proof}




The next proposition derives several inequalities for $A_i$.

\begin{proposition} \label{<2m}
Let $S\subseteq  \mathcal {L}_m(s)$ be a generalized-$\beta$-set. Then,
\begin{enumerate}
    \item[(1)] $A_i-A_{i+1}\ge 2m$ for $1\leq i\leq t-2$;
    \item[(2)] If $A_{t-1}-A_t\leq 2m-1$, then there exists $1\leq p<m$, such that $a_{(t-1)m}=a_{(t-1)m+1}=\cdots=a_{(t-1)m+p}=1$ and $a_{(t-1)m+p+1}=\cdots=a_{tm}=0$.
\end{enumerate}

\end{proposition}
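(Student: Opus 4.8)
The plan is to prove both parts by carefully summing the defining inequalities of a generalized-$\beta$-set, tracking exactly where equality can hold.

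For part (1), recall that $A_i = \sum_{j=1}^m a_{(i-1)m+j}$, so comparing $A_i$ and $A_{i+1}$ amounts to comparing $a_{(i-1)m+j}$ with $a_{im+j}$ for each $j \in \{1,\dots,m\}$. The key input is Condition (3) of Definition \ref{def:4.2}: whenever $a_{(i-1)m+j+m} = a_{im+j} > 0$ we have $a_{im+j} \le a_{(i-1)m+j} - 2$, which gives a saving of at least $2$ in each of the $m$ coordinates. The plan is to argue that for $1 \le i \le t-2$, every one of these $m$ comparisons contributes at least $2$. First I would invoke the definition of $t = t(S)$: since $i+1 \le t-1 < t$, the row-band indexed by $i+1$ lies strictly below the top occupied band, so I need to show each $a_{im+j}$ in that band is positive (or handle the zero case directly). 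The cleanest route is: if $a_{im+j} > 0$ then Condition (3) gives $a_{(i-1)m+j} - a_{im+j} \ge 2$; if $a_{im+j} = 0$, I would use Condition (1) (the ``no gaps'' monotonicity $a_i = 0 \Rightarrow a_{i+1}=0$) together with the fact that $i+1 \le t-1$ to derive that the entire band $i+1$ being partly empty forces a contradiction with $t(S)$ being the top nonempty level, so in fact all $a_{im+j}$ with $1 \le j \le m$ are positive for $i \le t-2$. Summing the $m$ inequalities then yields $A_i - A_{i+1} \ge 2m$.

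For part (2), I would argue contrapositively in the same spirit but now at the top band. Here $A_{t-1} - A_t = \sum_{j=1}^m \bigl(a_{(t-2)m+j} - a_{(t-1)m+j}\bigr)$, and by the definition of $t$ the band $t$ (the $a_{(t-1)m+j}$'s) is the topmost nonempty band. Again each coordinate with $a_{(t-1)m+j} > 0$ contributes at least $2$ by Condition (3), while each coordinate with $a_{(t-1)m+j} = 0$ contributes $a_{(t-2)m+j} \ge 0$. So if the sum is at most $2m-1$, at least one coordinate must contribute strictly less than $2$, which can only happen at a coordinate where $a_{(t-1)m+j} = 0$ contributing $a_{(t-2)m+j} \in \{0,1\}$, or (the borderline) where the band-$t$ entry equals $1$. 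I would then combine this deficit with Condition (1) (monotone vanishing along the indices), Condition (2) (within-block non-increase $a_i \ge a_{i+1}$ when $m \nmid i$), and Condition (6) ($a_{(t-1)m} \ge a_{(t-1)m+1}$) to pin down the shape: the only way to lose enough is for the top band to consist of an initial run of $1$'s followed by $0$'s, i.e.\ $a_{(t-1)m} = \cdots = a_{(t-1)m+p} = 1$ and $a_{(t-1)m+p+1} = \cdots = a_{tm} = 0$ for some $1 \le p < m$. The upper bound $p < m$ comes from noting that if all of $a_{(t-1)m+1},\dots,a_{tm}$ equalled $1$ we could compute $A_{t-1} - A_t$ exactly and check it still fails the hypothesis, or more directly from Condition (3) applied across the top two bands forcing at least one further drop.

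The main obstacle I expect is the bookkeeping in part (2): disentangling which coordinates can contribute $0$ versus $1$ to the deficit, and showing that Conditions (1), (2), and (6) force these small contributions to occur in a contiguous tail of $1$'s rather than scattered. In particular, I must be careful that the within-block monotonicity (Condition (2), which holds only when $m \nmid i$) interacts correctly with the block boundary handled separately by Condition (6); the index $i = (t-1)m$ is exactly the boundary where Condition (2) does not apply, which is why Condition (6) is listed separately and is precisely the ingredient needed to propagate the non-increase across into the top block. Ensuring that all the case distinctions at this boundary are exhaustive, and that the claimed run of $1$'s is both nonempty ($p \ge 1$, which follows since the top band is nonempty by definition of $t$) and proper ($p < m$), will be the delicate part of the argument.
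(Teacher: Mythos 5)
Your proposal follows essentially the same route as the paper: part (1) is exactly the paper's argument (positivity of every $a_{im+j}$ with $i\leq t-2$ via the definition of $t$ and Condition (1), then summing Condition (3) coordinatewise over the block), and part (2) isolates the same deficit and closes it with Conditions (2), (3) and (6) just as the paper does by first forcing $a_{(t-1)m}=1$. One small imprecision: a coordinate with $a_{(t-1)m+j}=1$ still contributes at least $2$ to $A_{t-1}-A_{t}$ by Condition (3), so your ``borderline'' case is not actually a source of deficit, but this does not affect the correctness of the argument.
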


\begin{proof}
By the definition of $t$, we have $a_{(t-1)m+1}>0$. By Condition (1) in Definition \ref{def:4.2}, we have $a_i>0$ for $1\leq i\leq (t-1)m+1$. Then Conclusion (1) can be directly derived by Condition (3) in Definition \ref{def:4.2}. Next, we prove Conclusion (2), assuming that $A_{t-1}-A_t\leq 2m-1$. By Condition (6) in Definition \ref{def:4.2} we have $a_{(t-1)m}\ge a_{(t-1)m+1}>0$. If $a_{(t-1)m}\ge 2$, then $a_{(t-2)m+k}\ge a_{(t-1)m}\ge 2$ for $1\leq k\leq m$ by Condition (2) in Definition \ref{def:4.2}. Therefore, $a_{(t-2)m+k}-a_{(t-1)m+k}\ge 2$ for $1\leq k\leq m$. This contradicts with $A_{t-1}-A_t<2m$. Therefore, $a_{(t-1)m}=1$. Note that $0<a_{(t-1)m+1}\leq a_{(t-1)m}$ and $a_{tm}<a_{(t-1)m}$, we derive Conclusion (2) in this proposition.
\end{proof}

\begin{example}
(1) Let $m=3$, $s=5$, and $S=\mathcal{L}_3(5)$. Then we have $A_1=11$ and $A_2=5=11-2\times 3=A_1-2m$. 

(2) Let $m=3$, $s=6$, and $S=\mathcal{L}_3(6)$. Then we have $A_1=14$, $A_2=8=14-2\times 3=A_1-2m$ and $A_3=2=8-2\times 3=A_2-2m$.
\end{example}






\par Let $\mathbb{A}_m(s) $ be the set of all $\beta$-sets of $(s,ms-1,ms+1)$-core partitions and $\mathbb{B}_m(s) $ be the set of all generalized-$\beta$-sets $S\subseteq  \mathcal {L}_m(s)$. By Lemma \ref{if}, we know that
\begin{equation*} \max_{S'\in\mathbb{A}_m(s)}f(S')\leq\max_{S''\in\mathbb{B}_m(s)}f(S'').
\end{equation*}
We aim to prove Nath-Sellers' conjecture by showing that
\begin{equation*}
\max_{S'\in\mathbb{A}_m(s)}f(S')=\max_{S''\in\mathbb{B}_m(s)}f(S'')
\end{equation*}
and
\begin{equation*}
    {\arg}\max_{S'\in\mathbb{A}_m(s)}f(S')={\arg}\max_{S''\in\mathbb{B}_m(s)}f(S''),
\end{equation*}
which will be proved in the following sections while we are searching for all generalized-$\beta$-sets $S$ that maximize $f(S)$. Here and in the rest of the paper, for a function $g: D \to \mathbb{R}$,
\begin{equation*}
    {\arg}\max_{x \in D} g(x) := \{t \in D : g(x) = \max_{t \in D} g(x) \}.
\end{equation*}
\par We use generalized-$\beta$-sets instead of $\beta$-sets since the original definition of the $\beta$-set is too tight for the adjustments in Sections $5$, $6$ and $7$. We can easily make a {generalized-$\beta$-set} remain a {generalized-$\beta$-set} after each adjustment in our following proofs, while a $\beta$-set may not remain a $\beta$-set after the adjustments.

\section{Preliminaries for the proofs}


In the next three sections, we will introduce several adjustments for a generalized-$\beta$-set $S$ which doesn't maximize $f(S)$. After the adjustments, we will get a new set $S'$, such that $f(S)<f(S')$. We wish to prove that $S'$ is a generalized-$\beta$-set. The following lemma will be repeatedly used to verify Condition (3) in Definition \ref{def:4.2} for $S'$.

\begin{lemma} \label{One Line}
Let $x_1,x_2,\cdots,x_n$ and $y_1,y_2,\cdots,y_n$ be two non-increasing integer sequences such that $|x_n-x_1|\leq 1$ and $|y_n-y_1|\leq 1$. Assume that
\[
\sum_{i=1}^{n} x_i \leq \sum_{i=1}^{n} y_i - 2n,
\]
then $x_i\leq y_i-2$ for $1\leq i\leq n$.
\end{lemma}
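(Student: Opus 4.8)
The plan is to prove this by analyzing the two non-increasing sequences through the lens of their near-constancy. The hypotheses $|x_n - x_1| \le 1$ and $|y_n - y_1| \le 1$, combined with monotonicity, force each sequence to take at most two distinct consecutive integer values. So I would first set $x_{\min} := x_n$ and $x_{\max} := x_1$, noting that every $x_i \in \{x_{\min}, x_{\min}+1\}$ with $x_{\min} \le x_i \le x_{\max} \le x_{\min}+1$, and similarly every $y_i \in \{y_{\min}, y_{\min}+1\}$. The claim $x_i \le y_i - 2$ for all $i$ is equivalent to $x_{\max} \le y_{\min} - 2$, because $x_i \le x_{\max}$ and $y_{\min} \le y_i$; establishing the single inequality $x_{\max} \le y_{\min} - 2$ would immediately give the full conclusion for every index.

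Next I would extract the needed bound from the summation hypothesis. Since each $x_i \le x_{\min}+1$, we have $\sum_{i=1}^n x_i \le n(x_{\min}+1) = n x_{\min} + n$, and since each $y_i \ge y_{\min}$, we have $\sum_{i=1}^n y_i \ge n y_{\min}$. Feeding these into the assumption $\sum x_i \le \sum y_i - 2n$ is the wrong direction, so instead I would bound the other way: from $\sum x_i \ge n x_{\min}$ (as each $x_i \ge x_{\min}$) and $\sum y_i \le n(y_{\min}+1)$ (as each $y_i \le y_{\min}+1$), the hypothesis yields
\[
n x_{\min} \le \sum_{i=1}^n x_i \le \sum_{i=1}^n y_i - 2n \le n(y_{\min}+1) - 2n = n y_{\min} - n.
\]
Dividing by $n>0$ gives $x_{\min} \le y_{\min} - 1$, hence $x_{\max} \le x_{\min} + 1 \le y_{\min}$, which is not quite the required $x_{\max} \le y_{\min} - 2$. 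This signals that the crude one-sided bounds lose too much, and I would need to argue more carefully using both sequences' two-value structure simultaneously rather than discarding information about how many indices attain the larger value.

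The main obstacle, then, is tightening the estimate to recover the full gap of $2$. I would handle this by parametrizing: let $p$ be the number of indices with $x_i = x_{\min}+1$ (so $n-p$ indices equal $x_{\min}$) when $x_{\max} = x_{\min}+1$, and let $q$ be the number of indices with $y_i = y_{\min}$ (so $n-q$ equal $y_{\min}+1$). Then $\sum x_i = n x_{\min} + p$ and $\sum y_i = n y_{\min} + (n-q)$, and the hypothesis becomes $n x_{\min} + p \le n y_{\min} + n - q - 2n$, i.e. $n(x_{\min} - y_{\min}) \le -n - p - q \le -n$, giving $x_{\min} - y_{\min} \le -1 - (p+q)/n$. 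The key observation is that $x_{\min} - y_{\min}$ is an integer, so $x_{\min} - y_{\min} \le -1$ combined with the strict deficit forces $x_{\min} \le y_{\min} - 2$ whenever the leftover term $(p+q)/n$ would otherwise round up; I would verify the remaining boundary cases (when $x_{\max} = x_{\min}$ or $y_{\min}+1 = y_{\max}$ does not occur) directly, checking that in every configuration the integrality of the common difference upgrades the bound to $x_{\max} \le y_{\min} - 2$, which is precisely $x_i \le y_i - 2$ for all $i$.
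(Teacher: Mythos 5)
Your reduction in the first paragraph is where the argument breaks: the claim ``$x_i \le y_i - 2$ for all $i$ is equivalent to $x_{\max} \le y_{\min} - 2$'' is false. That single inequality is sufficient for the conclusion but not necessary, and --- more importantly --- it does not follow from the hypotheses. Take $n=2$, $x = (0,-1)$, $y = (2,1)$: both sequences are non-increasing with spread $1$, and $\sum x_i = -1 \le \sum y_i - 2n = -1$, so the lemma applies and its conclusion holds ($0 \le 2-2$ and $-1 \le 1-2$). Yet $x_{\max} = 0 > -1 = y_{\min}-2$. So the statement you set out to prove is simply not true under the given assumptions, and no amount of tightening the counting argument in your third paragraph can rescue it; indeed your own refined estimate only ever reaches $x_{\min} \le y_{\min} - 2$, hence $x_{\max} \le y_{\min}-1$, and the ``boundary cases'' you defer are exactly the configurations where the target fails. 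The information you discard is the alignment between the two sequences: because both are non-increasing, the indices where $x_i$ is large are the same (initial) indices where $y_i$ is large, and the lemma is only true because the comparison is made indexwise.

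The paper's proof keeps that alignment. It argues by contradiction: if $x_i \ge y_i - 1$ at some index $i$, then for $j > i$ one has $x_j \ge x_i - 1 \ge y_i - 2 \ge y_j - 2$ (using the spread bound on $x$ and monotonicity of $y$), and for $j < i$ one has $x_j \ge x_i \ge y_i - 1 \ge y_j - 2$ (using monotonicity of $x$ and the spread bound on $y$); summing these $n$ indexwise inequalities, with strict excess at index $i$, contradicts $\sum x_j \le \sum y_j - 2n$. If you want to salvage your two-value parametrization, you would need to compare $x_i$ and $y_i$ at each index rather than comparing $x_{\max}$ against $y_{\min}$, which essentially collapses back into the paper's argument.
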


\begin{proof}

\par We prove it by contradiction. Assume that there exists $1\leq i\leq n$, such that $x_i>y_i-2$, then $x_i\ge y_i-1$. For $j>i$, we have $x_j\ge x_i-1\ge y_i-2\ge y_j-2$. For $j<i$, we have $x_j\ge x_i\ge y_i-1\ge y_j-2$. Therefore, $\sum_{j=1}^{n} x_j>\sum_{j=1}^{n} y_j - 2n$. A contradiction.
\end{proof}



The following lemma shifts our focus from all generalized-$\beta$-sets to the ones with some specific structure.

\begin{lemma}\label{initial}
Let $S$ be a generalized-$\beta$-set that maximizes $f(S)$. Then 
\begin{align}\label{eq: 5.1}
   \mathcal{B}_i=S\cap[(i-1)s,is-1] 
\end{align} consists of consecutive integers for $1\leq i\leq tm$. Furthermore, for $0\leq i\leq t-1$ and $1\leq k\leq m$, we have 
    \begin{align}\label{eq: 5.2}
n_{im+k}=
\begin{cases}
s-i-2 \ \ \text{if} \ a_{im+k}=a_{(i+1)m};\\
s-i-1 \ \ \text{if} \ a_{im+k}>a_{(i+1)m}.
\end{cases}
\end{align}
\end{lemma}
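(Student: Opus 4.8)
The plan is to show that both conclusions are \emph{forced} on a maximizer by two kinds of local adjustment — first compacting each block into an interval, then lifting each block to the highest admissible residue — each of which weakly increases $f$ while keeping $S$ a generalized-$\beta$-set contained in $\mathcal{L}_m(s)$. The guiding observation is that the six conditions of Definition \ref{def:4.2} and the containment $S\subseteq \mathcal{L}_m(s)$ depend on $S$ only through the data $(a_i,n_i)$ (and through $t$, which is itself determined by which blocks are nonempty). Once every $\mathcal{B}_i$ is an interval we have $\mathcal{B}_i=\{(i-1)s+n_i-a_i+1,\ldots,(i-1)s+n_i\}$, so each block contributes $a_i\big((i-1)s+n_i\big)-\binom{a_i}{2}$ to $\sum_{x\in S}x$, and by Definition \ref{count}
$$ f(S)=C\big((a_i)_i\big)+\sum_i a_i\, n_i, $$
where $C\big((a_i)_i\big)$ collects the terms $a_i(i-1)s-\binom{a_i}{2}$ and $-\binom{\sum_i a_i}{2}$ and depends only on the cardinality profile $(a_i)_i$.

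For \eqref{eq: 5.1}, suppose some $\mathcal{B}_i$ is not an interval. Replace it by the block of $a_i$ consecutive integers with the same maximum $(i-1)s+n_i$, leaving every other block untouched, and call the result $S'$. This alters no $a_j$ and no $n_j$, so $S'$ obeys all conditions of Definition \ref{def:4.2} and has the same $t$; it also stays inside $\mathcal{L}_m(s)$, because in the row containing $\mathcal{B}_i$ the residues admissible for $\mathcal{L}_m(s)$ form a contiguous interval whose top is at least $n_i$, and $\mathcal{B}_i$ already supplied $a_i$ admissible residues not exceeding $n_i$, so the compacted block (the $a_i$ largest such residues, ending at $n_i$) remains admissible. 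A term-by-term comparison of the two sorted blocks gives $\sum_{x\in S'}x>\sum_{x\in S}x$, hence $f(S')>f(S)$, contradicting maximality. Thus every $\mathcal{B}_i$ is consecutive.

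For \eqref{eq: 5.2} we now assume all blocks are intervals and use $f(S)=C\big((a_i)_i\big)+\sum_i a_i n_i$ with the profile $(a_i)_i$ held fixed. Raise each nonempty $n_{im+k}$ to the ceiling $U_{im+k}$ of Proposition \ref{upper bound} (that is, $U_{im+k}=s-i-2$ if $a_{im+k}=a_{(i+1)m}$ and $U_{im+k}=s-i-1$ otherwise), keeping each block consecutive, and call the result $S'$. Then $S'$ is again a generalized-$\beta$-set: within a row the values $U_{im+k}$ equal $s-i-1$ on a prefix and $s-i-2$ on a suffix (the switch occurring exactly where $a$ reaches $a_{(i+1)m}$), so they are non-increasing and are equal whenever the corresponding $a$'s are equal, giving Conditions (4) and (5); Conditions (1)--(3) and (6) are untouched since $(a_i)_i$ is unchanged; and at a row start $m\mid i$ Condition (4) imposes nothing, so the lead block of each row rises without obstruction from the left. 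The lifted block also fits in $\mathcal{L}_m(s)$: by Proposition \ref{upper bound} and \eqref{eq: 5.1} the original interval satisfies $a_{im+k}\le n_{im+k}-i\le U_{im+k}-i$, so its new bottom residue $U_{im+k}-a_{im+k}+1\ge i+1$ is admissible. Since every $n_i$ weakly increases and every $a_i\ge 0$, we get $f(S')\ge f(S)$; maximality forces $a_i n_i=a_i U_i$ for all $i$, hence $n_{im+k}=U_{im+k}$ on every nonempty block, which is \eqref{eq: 5.2}.

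The main obstacle is the bookkeeping in the lifting step: one must check simultaneously that pushing all blocks to their residue ceilings preserves Conditions (4) and (5) and does not overflow $\mathcal{L}_m(s)$, the crucial facts being that Condition (4) is silent across row boundaries and that Proposition \ref{upper bound} already bounds $a_{im+k}$ below $U_{im+k}-i$. A secondary point concerns empty blocks in the last row: for these $a_{im+k}=0$ and $n_{im+k}=0$ by convention, so they contribute nothing to $\sum_i a_i n_i$ and play no role; accordingly \eqref{eq: 5.2} is read over the nonempty blocks, which by Condition (1) of Definition \ref{def:4.2} form a prefix of $\mathcal{B}_1,\ldots,\mathcal{B}_{tm}$.
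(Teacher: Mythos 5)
Your proof is correct and follows essentially the same route as the paper: both construct from $S$ a competitor whose blocks are intervals pushed up to the residue ceilings of Proposition \ref{upper bound}, verify via the $(a_i,n_i)$-data that the competitor is still a generalized-$\beta$-set inside $\mathcal{L}_m(s)$, and invoke maximality of $f(S)$ to force $S$ to equal it. The only difference is presentational — you split the adjustment into a compacting step and a lifting step where the paper does both at once — and your explicit remarks on the admissible-residue intervals of $\mathcal{L}_m(s)$ and on empty blocks in the last row are slightly more careful than the paper's version.
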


\begin{proof}
\par 
First, we construct a generalized-$\beta$-set $S'\subseteq  \mathcal {L}_m(s)$. For $0\leq i\leq t-1$ and $1\leq k\leq m$, if $a_{im+k}=a_{(i+1)m}$, let $\mathcal{B}'_{im+k}=\{(im+k)s-i-a_{im+k}-1,(im+k)s-i-a_{im+k},\cdots,(im+k)s-i-2\}$. Otherwise, let $\mathcal{B}'_{im+k}=\{(im+k)s-i-a_{im+k},(im+k)s-i-a_{im+k}+1,\cdots,(im+k)s-i-1\}$. Then $\mathcal{B}'_{im+k}$ consists of $a_{im+k}=|S\cap [(im+k-1)s, (im+k)s-1]|$ consecutive numbers, and $n_i'=\max~ \{x~ \text{mod}~s: x\in  \mathcal{B}_i(S') \} $ satisfies the equality in Proposition \ref{upper bound}.
\par Therefore, Conditions (1)(2)(3)(6) in Definition \ref{def:4.2} hold for $S'$ since $a_i'=a_i$ for $1\leq i\leq tm$. Conditions (4)(5) in Definition \ref{def:4.2} are also true since the equality in Proposition \ref{upper bound} is achieved. Thus $S'$ is also a generalized-$\beta$-set and $|S|=|S'|$.
\par For $1\leq i\leq tm$, $\sum_{x\in\mathcal{B}_i}x\leq\sum_{x\in\mathcal{B}'_i}x$. Therefore,
\begin{equation*}
    f(S)=\sum_{i=1}^{tm}\sum_{x\in\mathcal{B}_i}x-\frac{|S|(|S|-1)}{2} 
    \leq\sum_{i=1}^{tm}\sum_{x\in\mathcal{B}'_i}x-\frac{|S'|(|S'|-1)}{2}
    = f(S').
\end{equation*}
If $S \neq S'$, the equality above cannot be achieved, which means that $f(S)<f(S')$. However, at the beginning we assume that $S$ is a generalized-$\beta$-set that maximizes $f(S)$, which means that $f(S)\geq f(S')$,  a {contradiction}. Therefore, $S=S'$ and thus for $0\leq i\leq t-1$ and $1\leq k\leq m$, $\mathcal{B}_{im+k}$ consists of consecutive integers and
\[
n_{im+k}=
\begin{cases}
s-i-2 \ \ if \ a_{im+k}=a_{(i+1)m};\\
s-i-1 \ \ if \ a_{im+k}>a_{(i+1)m}.
\end{cases}
\]
\end{proof}

\begin{definition}\label{def:5.3}
Recall that $\mathbb{A}_m(s) $ is the set of all $\beta$-sets of $(s,ms-1,ms+1)$-core partitions and $\mathbb{B}_m(s) $ is the set of all generalized-$\beta$-sets $S\subseteq  \mathcal {L}_m(s)$. Let $\mathbb{C}_m(s)$ be the set of all generalized-$\beta$-sets $S\subseteq\mathcal{L}_m(s)$ such that $S$ satisfies Conditions \eqref{eq: 5.1} and \eqref{eq: 5.2} in Lemma \ref{initial}. 
Let $\mathbb{D}_m(s)$ be the set of all nonempty sets $S\subseteq\left[\lceil (s-1)/2 \rceil ms-1 \right]$ such that the nonempty set $\mathcal{B}_i$ consists of $a_i$ consecutive integers for $1\leq i\leq \lceil (s-1)/2 \rceil m$. 
\end{definition}

The following lemma is obvious and the proof is omitted.

\begin{lemma}
For the sets $\mathbb{C}_m(s)$ and $\mathbb{D}_m(s)$ defined in Definition \ref{def:5.3}, we have 
\begin{enumerate}
    \item[(1)] $\mathbb{C}_m(s)\subseteq\mathbb{D}_m(s)$;
    \item[(2)] $\mathbb{C}_m(s)\subseteq\mathbb{B}_m(s)$.
\end{enumerate}
    
\end{lemma}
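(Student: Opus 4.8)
The plan is to establish both inclusions directly from the defining conditions in Definition \ref{def:5.3}, since the content is entirely a matter of unwinding what membership in each family demands.

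Part (2) is immediate. By Definition \ref{def:5.3}, $\mathbb{C}_m(s)$ is precisely the collection of generalized-$\beta$-sets $S\subseteq\mathcal{L}_m(s)$ that in addition satisfy the structural conditions \eqref{eq: 5.1} and \eqref{eq: 5.2} of Lemma \ref{initial}, while $\mathbb{B}_m(s)$ is the collection of all generalized-$\beta$-sets $S\subseteq\mathcal{L}_m(s)$. Imposing the extra conditions can only shrink the family, so $\mathbb{C}_m(s)\subseteq\mathbb{B}_m(s)$ holds with nothing further to check.

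For part (1) I would fix an arbitrary $S\in\mathbb{C}_m(s)$ and verify the two requirements for $S\in\mathbb{D}_m(s)$. The set $S$ is nonempty because every generalized-$\beta$-set is nonempty by Definition \ref{def:4.2}. First I would confirm $S\subseteq[\lceil(s-1)/2\rceil ms-1]$: since $S\subseteq\mathcal{L}_m(s)$, it suffices to bound the largest row index occupied by $\mathcal{L}_m(s)$ in the $ms$-abacus. Each pyramid $P_k$ has a base of width at most $s-2$, so its number of occupied rows is $\lceil(s-1)/2\rceil$; hence the top row index is $\lceil(s-1)/2\rceil-1$ and every element of $\mathcal{L}_m(s)$ is strictly smaller than $\lceil(s-1)/2\rceil ms$. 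This is the one numerical point, and it reduces to a height count for the base interval, handled by the same parity split ($s=2t-1$ versus $s=2t-2$) already used in the proof of Lemma \ref{conjugate}. Second, I would check that each nonempty block $\mathcal{B}_i$ consists of $a_i$ consecutive integers for every $1\leq i\leq\lceil(s-1)/2\rceil m$: for $1\leq i\leq tm$ this is exactly condition \eqref{eq: 5.1}, and for $tm<i\leq\lceil(s-1)/2\rceil m$ the block is empty, since $t=t(S)$ gives $S\subseteq[0,tms-1]$ and therefore $\mathcal{B}_i=S\cap[(i-1)s,is-1]=\emptyset$ whenever $i>tm$, making the requirement vacuous. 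These two facts give $S\in\mathbb{D}_m(s)$.

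The only step that is not purely a matter of definitions is the row-count bound $\mathcal{L}_m(s)\subseteq[\lceil(s-1)/2\rceil ms-1]$, so I would treat that height computation as the (mild) main obstacle; everything else is direct bookkeeping against Definition \ref{def:5.3} and Lemma \ref{initial}.
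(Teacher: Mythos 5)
Your proof is correct; the paper itself states only that the lemma is obvious and omits the proof, and your argument supplies exactly the routine verifications that the authors leave implicit (the containment of $\mathcal{L}_m(s)$ in $[\lceil(s-1)/2\rceil ms-1]$ via the pyramid height count, the consecutive-integer property from condition \eqref{eq: 5.1}, and the vacuity of the condition for the empty blocks with $i>tm$). Nothing further is needed.
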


We use Figure \ref{fig:abcd} to visualize that $\mathbb{A}_m(s) \subseteq \mathbb{B}_m(s), \mathbb{C}_m(s) \subseteq \mathbb{B}_m(s), \mathbb{C}_m(s) \subseteq \mathbb{D}_m(s)$.

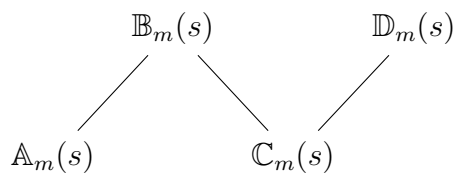
\begin{figure}[h!]
    \centering
    \begin{tikzpicture}[node distance=.2cm]
    \node(B) {$\mathbb{B}_m(s)$};
    \node(A) [below left=1cm and .2cm of B] {$\mathbb{A}_m(s)$};
    \node(C) [below right=1cm and .2cm of B] {$\mathbb{C}_m(s)$};
    \node(D) [above right=1cm and .2cm of C] {$\mathbb{D}_m(s)$};
    \draw (A) -- (B);
    \draw (B) -- (C);
    \draw (C) -- (D);
    \end{tikzpicture}
    \caption{Relations between $\mathbb{A}_m(s), \mathbb{B}_m(s), \mathbb{C}_m(s),$ and $ \mathbb{D}_m(s)$.}
    \label{fig:abcd}
\end{figure}

\begin{remark}
By Lemma \ref{initial} it is easy to see that 
\begin{equation*}
    {\arg}\max_{S'\in\mathbb{B}_m(s)}f(S')={\arg}\max_{S''\in\mathbb{C}_m(s)}f(S'').
\end{equation*}

\end{remark}


The following lemma gives a characterization of $\mathbb{D}_m(s)$.

\begin{lemma}
    There is a bijection $F: \mathbb{D}_m(s)\to V$, 
    where $V\subseteq\Z^{2\lceil (s-1)/2 \rceil m}$, and for $Z=\left(x_1,\ldots,x_{\lceil (s-1)/2 \rceil m};y_1,\ldots,y_{\lceil (s-1)/2 \rceil m}\right)\in\Z^{2\lceil (s-1)/2 \rceil m}$, $Z\in V$ if and only if $Z$ satisfies the following conditions:
    \begin{enumerate}
        \item[(1)] For $1\leq i\leq \lceil (s-1)/2 \rceil m$, $0\leq x_i\leq s-1$ and $0\leq y_i\leq s$;
        \item[(2)] For $1\leq i\leq \lceil (s-1)/2 \rceil m$, $x_i\ge y_i-1$.
    \end{enumerate}
\end{lemma}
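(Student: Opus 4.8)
The plan is to take for $F$ the map that records, interval by interval, the two statistics that already describe each block: the residue $n_i$ of the top of $\mathcal{B}_i$ and its cardinality $a_i$. Writing $N:=\lceil (s-1)/2\rceil m$, I would set
\[
F(S):=\bigl(n_1(S),\dots,n_N(S);\,a_1(S),\dots,a_N(S)\bigr),
\]
so that $(x_i,y_i)=(n_i,a_i)$. The reason this should be a bijection is exactly the defining feature of $\mathbb{D}_m(s)$: every piece $\mathcal{B}_i=S\cap[(i-1)s,is-1]$ is a run of consecutive integers inside an interval of length $s$, and such a run is completely determined by how high its top sits and how long it is.

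First I would check that $F(S)\in V$. Condition (1) is automatic, since $n_i$ is a residue modulo $s$ and $\mathcal{B}_i$ lies among $s$ consecutive integers, giving $0\le n_i\le s-1$ and $0\le a_i\le s$. For condition (2), when $\mathcal{B}_i\ne\emptyset$ its elements are the $a_i$ consecutive integers with top $(i-1)s+n_i$, hence with bottom $(i-1)s+n_i-a_i+1$; demanding that this bottom remain $\ge(i-1)s$ yields $n_i\ge a_i-1$, i.e.\ $x_i\ge y_i-1$, while if $\mathcal{B}_i=\emptyset$ then $n_i=a_i=0$ and (2) holds trivially. Injectivity is then immediate from a recovery rule built directly into the argument: from $(n_i,a_i)$ one reads $\mathcal{B}_i=\emptyset$ when $a_i=0$ and $\mathcal{B}_i=\{(i-1)s+n_i-a_i+1,\dots,(i-1)s+n_i\}$ when $a_i\ge1$, so $S=\bigcup_i\mathcal{B}_i$ is reconstructed and $F$ is one-to-one.

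For surjectivity I would run the recovery rule in reverse: given $Z\in V$, define each $\mathcal{B}_i$ by the displayed formula and set $S:=\bigcup_i\mathcal{B}_i$. Conditions (1) and (2) are precisely what force each nonempty $\mathcal{B}_i$ to be a genuine run of $y_i$ consecutive integers inside $[(i-1)s,is-1]$: its top $(i-1)s+x_i$ is $\le is-1$ by (1) and its bottom $(i-1)s+x_i-y_i+1$ is $\ge(i-1)s$ by (2), whence $S\in\mathbb{D}_m(s)$ and $F(S)=Z$. The step I expect to demand the most care is the boundary bookkeeping at the empty rows. When $y_i=0$ the reconstruction forgets $x_i$ and returns $\mathcal{B}_i=\emptyset$, whose recorded residue is $n_i=0$ by convention, so $F$ reaches only those $Z$ with $x_i=0$ whenever $y_i=0$ (and, since $S$ must be nonempty, with $Z\ne0$); a similar tightening occurs at the first interval, where the exclusion of the integer $0$ forces a nonempty $\mathcal{B}_1$ to have bottom $\ge1$. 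I would therefore pin down the image of $F$ as this normalised part of $V$, matching the convention $n_i=0$ for empty rows with the facet $y_i=0$. This refinement is harmless for everything that follows, because the size $f(S)$ re-expressed through $F$ is independent of $x_i$ as soon as $y_i=0$, so collapsing every empty row to its normal form leaves the maximisation of $f$ over $V$ unchanged.
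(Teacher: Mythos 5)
Your proposal is correct and uses exactly the same map $F(S)=\bigl(n_1,\dots,n_N;a_1,\dots,a_N\bigr)$, the same membership check, and the same reconstruction-based injectivity/surjectivity argument as the paper's proof. You are in fact more careful than the paper: its surjectivity step silently overlooks that a $Z\in V$ with $y_i=0$ but $x_i\neq 0$ (or one whose reconstructed block would contain $0$, or $Z=0$ itself) is not in the image of $F$ under the convention $n_i=0$ for empty blocks, and your normalisation of the codomain is the right way to repair this harmless imprecision.
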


\begin{proof}
Let $N=\lceil (s-1)/2 \rceil m$. For $S\in\mathbb{D}_m(s) $, let
\[
F(S)=\left(n_1(S),n_2(S),\cd,n_N(S);a_1(S),a_2(S),\cd,a_N(S)\right).
\]Then for $1\leq i\leq N$, by definition we have $0\leq n_i(S)\leq s-1, ~0\leq a_i(S)\leq s$ and $a_i(S)-1\leq n_i(S)$. Therefore, $f(S)\in V$ and $F$ are well-defined.
\par We first prove that $F$ is injective. Assume that $S,S'\in\mathbb{D}_m(s)$ such that $F(S)=F(S')$, then $(n_i,a_i)=(n_i',a_i')$ for $1\leq i\leq N$. Then $\mathcal{B}_i=\{(i-1)s+n_i-a_i+1,(i-1)s+n_i-a_i+2,\cdots,(i-1)s+n_i\}=\{(i-1)s+n_i'-a_i'+1,(i-1)s+n_i'-a_i'+2,\cdots,(i-1)s+n_i'\}=\mathcal{B}_i'$. Therefore, we have $S=S'$.
\par Next, we prove that $F$ is surjective. For any $Z=\left(x_1,\cdots,x_N;y_1,\cdots,y_N\right)\in V$, let $\mathcal{B}_i=\{(i-1)s+n_i-a_i+1,(i-1)s+n_i-a_i+2,\cdots,(i-1)s+n_i\}$. Then $\mathcal{B}_i\subseteq [(i-1)s,is-1]$ consists of consecutive integers. Therefore, we have $S=\cup_{i=1}^{N}\mathcal{B}_i\subseteq\mathbb{D}_m(s) $ and $F(S)=Z$.
\par From the discussion above, we obtain that $F$ is bijective.
\end{proof}

Next, we show how to calculate $f(S)$ for $S\in\mathbb{D}_m(s)$.

\begin{lemma}\label{lem:5.7}
Let $S\in\mathbb{D}_m(s) $. Then
\begin{equation*}
    \sum_{x\in\mathcal{B}_i}x=a_i(i-1)s+n_ia_i-\frac{a_i(a_i-1)}{2}
\end{equation*}and
\begin{equation*}
    f(S)=\sum_{i=1}^{tm}\left(a_i(i-1)s+n_ia_i-\frac{a_i(a_i-1)}{2}\right)-\frac{\sum_{i=1}^{tm}a_i\left(\sum_{i=1}^{tm}a_i-1\right)}{2}.
\end{equation*}
\end{lemma}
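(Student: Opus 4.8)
The statement to prove is Lemma~\ref{lem:5.7}, which gives an explicit formula for $\sum_{x\in\mathcal{B}_i}x$ and consequently for $f(S)$ when $S\in\mathbb{D}_m(s)$.

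Let me understand the setup. For $S\in\mathbb{D}_m(s)$, each nonempty $\mathcal{B}_i = S\cap[(i-1)s, is-1]$ consists of $a_i$ consecutive integers. The quantity $n_i$ is defined as $\max\{x \bmod s : x\in\mathcal{B}_i\}$, i.e. the residue of the largest element. So the largest element of $\mathcal{B}_i$ is $(i-1)s + n_i$, and since there are $a_i$ consecutive integers ending there, the set is exactly
$$\mathcal{B}_i = \{(i-1)s + n_i - a_i + 1, \ldots, (i-1)s + n_i\}.$$
This is precisely the description established in the bijection lemma just above. So the first formula is really just summing an arithmetic progression.

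The plan for the first equation is to compute $\sum_{x\in\mathcal{B}_i} x$ directly. I would write each element as $(i-1)s + (n_i - a_i + r)$ for $r = 1, 2, \ldots, a_i$. Summing the constant term $(i-1)s$ over the $a_i$ elements gives $a_i(i-1)s$. Summing the $n_i$ term gives $n_i a_i$. Summing $-a_i + r$ over $r=1,\ldots,a_i$ gives $-a_i^2 + \frac{a_i(a_i+1)}{2} = -\frac{a_i(a_i-1)}{2}$. Adding these gives exactly $a_i(i-1)s + n_i a_i - \frac{a_i(a_i-1)}{2}$, as claimed. The only subtlety is the empty case: if $\mathcal{B}_i=\emptyset$ then $a_i=0$ and $n_i=0$, and the formula correctly yields $0$, so the statement holds uniformly.

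For the second equation, I would recall that $f(S) = \sum_{x\in S} x - \frac{|S|(|S|-1)}{2}$ by Definition~\ref{count}. Since the sets $\mathcal{B}_i$ partition $S$, I have $\sum_{x\in S} x = \sum_{i=1}^{tm}\sum_{x\in\mathcal{B}_i} x$ and $|S| = \sum_{i=1}^{tm} a_i$ (the index range can be taken up to $tm$ since all $\mathcal{B}_i$ with $i>tm$ are empty by the definition of $t$). Substituting the first formula into $\sum_{x\in S}x$ and writing $|S|=\sum_{i=1}^{tm}a_i$ yields the displayed expression immediately. This lemma is essentially a bookkeeping computation with no real obstacle; the main point to be careful about is verifying that the index of summation can legitimately be truncated at $tm$ and that the empty-block convention ($a_i=n_i=0$) is consistent with the arithmetic-progression formula, so that the partition of $S$ into the blocks $\mathcal{B}_1,\ldots,\mathcal{B}_{tm}$ is exact.
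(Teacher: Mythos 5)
Your proposal is correct and matches the paper's own proof: both sum the arithmetic progression $\mathcal{B}_i=\{(i-1)s+n_i-a_i+1,\ldots,(i-1)s+n_i\}$ directly to get the first identity and then substitute into the definition of $f(S)$. Your extra remarks on the empty-block convention and the truncation of the sum at $tm$ are harmless additions the paper leaves implicit.
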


\begin{proof}
Since $f(S)=\sum_{i=1}^{tm}\sum_{x\in\mathcal{B}_i}x-|S|(|S|-1)/2$, we only need to prove the first equation. In fact, since $\mathcal{B}_i=\{(i-1)s+n_i-a_i+1,(i-1)s+n_i-a_i+2,\cd,(i-1)s+n_i\}$, we have
\begin{equation*}    \sum_{x\in\mathcal{B}_i}x=\sum_{j=0}^{a_i-1}\left((i-1)s+n_i-j\right)=a_i\left((i-1)s+n_i\right)-\frac{a_i(a_i-1)}{2}.
\end{equation*}
\end{proof}

Lemma \ref{lem:5.7} implies the following result.

\begin{corollary}\label{compare}
Let $S,S'\subseteq\mathbb{D}_m(s)$ and $t=t'$. Assume that there exist positive integers $i<j$, such that the following three conditions hold:
\begin{enumerate}
    \item[(1)] $(n_i',a_i')=(n_i-1,a_i-1)$ or $(n_i,a_i-1)$; \item[(2)] $(n_j',a_j')=(n_j+1,a_j+1)$ or $(n_i,a_i+1)$;
    \item[(3)] For all $1\leq k\leq tm$, $k\neq i,j$, we have $(n_k,a_k)=(n_k',a_k')$.
\end{enumerate}
Then $f(S)<f(S')$.
\end{corollary}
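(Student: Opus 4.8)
\textbf{Proof proposal for Corollary \ref{compare}.}
The plan is to compute $f(S')-f(S)$ directly using the closed-form expression from Lemma \ref{lem:5.7} and show it is strictly positive. Since the hypotheses say that $S$ and $S'$ differ only in rows $i$ and $j$ (Condition (3)), and since the cardinality changes cancel (row $i$ loses one element while row $j$ gains one, so $|S|=|S'|$), the term $-\tfrac{|S|(|S|-1)}{2}$ is unchanged. Hence $f(S')-f(S)$ reduces to the difference of the two affected summands,
\begin{equation*}
f(S')-f(S)=\Delta_i+\Delta_j,\qquad \Delta_k:=\left(a_k'(k-1)s+n_k'a_k'-\tfrac{a_k'(a_k'-1)}{2}\right)-\left(a_k(k-1)s+n_ka_k-\tfrac{a_k(a_k-1)}{2}\right).
\end{equation*}

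First I would handle row $j$ (the row that gains an element). Here $a_j'=a_j+1$, so the cardinality-dependent part contributes $(j-1)s$, and the remaining contribution depends on whether $n_j'=n_j+1$ or $n_j'=n_j$. In either case I would expand $\Delta_j$ and obtain a lower bound of the form $(j-1)s+(\text{something involving }n_j,a_j)$. Then I would do the analogous computation for row $i$ (the row that loses an element): here $a_i'=a_i-1$, the cardinality part contributes $-(i-1)s$, and I would get an upper bound on $-\Delta_i$ of the form $(i-1)s+(\text{something involving }n_i,a_i)$.

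The key observation is that, because $i<j$, the leading terms combine as $(j-1)s-(i-1)s=(j-i)s\ge s$, and this dominant gap of size at least $s$ will swamp the bounded lower-order corrections. Concretely, the lower-order terms are controlled because each $n_k$ lies in $[0,s-1]$ and each $a_k$ satisfies $a_k\le n_k+1\le s$ (from the defining constraints of $\mathbb{D}_m(s)$, equivalently Condition (2) of the set $V$), so the corrections from the $n_ka_k$ and $\tfrac{a_k(a_k-1)}{2}$ pieces are bounded in absolute value by a quantity strictly less than $s$. Thus after collecting terms I expect to arrive at $f(S')-f(S)\ge (j-i)s-(s-1)>0$, giving the strict inequality.

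The main obstacle will be the careful bookkeeping across the four cases arising from the two alternatives in Condition (1) and the two in Condition (2), and in particular verifying that the lower-order correction terms are genuinely bounded by less than $s$ rather than merely by $s$. I would organize this by writing $\Delta_j$ and $-\Delta_i$ as explicit polynomials in $n_i,a_i,n_j,a_j$, using $a_i\le n_i+1$ to bound the subtracted quantity in row $i$ and $a_j\le s$ together with $n_j\ge 0$ to bound the added quantity in row $j$; the inequality $a_k\le s$ is exactly what prevents the quadratic term $\tfrac{a_k(a_k\pm1)}{2}$ from overwhelming the linear gap. Once these bounds are in place the conclusion $f(S)<f(S')$ follows immediately, and the strictness is automatic from the $(j-i)s$ term being at least $s$ while the total correction is strictly smaller.
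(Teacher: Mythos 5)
Your proposal is correct and rests on the same two observations as the paper's proof: $|S|=|S'|$ kills the $\tfrac{|S|(|S|-1)}{2}$ term, and the change is confined to rows $i<j$, so the gain in block $j$ outweighs the loss in block $i$. The paper gets there without expanding Lemma \ref{lem:5.7}: in every case the hypothesis means $\mathcal{B}_i'=\mathcal{B}_i\setminus\{x_0\}$ and $\mathcal{B}_j'=\mathcal{B}_j\cup\{y_0\}$ for single elements $x_0\leq is-1<(j-1)s\leq y_0$, so $f(S')-f(S)=y_0-x_0>0$ exactly, which sidesteps the four-case bookkeeping and the need to bound the lower-order corrections by less than $s$ (your bounds $c_j\geq 0$ from $n_j'\geq a_j'-1$ and $c_i\leq s-1$ from $n_i\leq s-1$ do check out, so your route closes as well).
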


\begin{proof}
If $(n_i',a_i')=(n_i-1,a_i-1)$  and $(n_j',a_j')=(n_j+1,a_j+1)$, then there exist some $x_0\in\mathcal{B}_i$ and $y_0\in\left(\mathcal{L}_m(s)\cap[(j-1)s,js-1]\right)\backslash\mathcal{B}_j $, such that $\mathcal{B}_i'=\mathcal{B}_i\backslash\{x_0\}$ and $\mathcal{B}_j'=\mathcal{B}_j\cup\{y_0\}$. Thus, 
\begin{equation*}
    f(S')-f(S)=\left(\sum_{x\in\mathcal{B}_i'} x-\sum_{x\in\mathcal{B}_i} x\right)+\left(\sum_{y\in\mathcal{B}_j'} y-\sum_{y\in\mathcal{B}_j} y\right)=y_0-x_0>0.
\end{equation*}
For other cases of $(n_i',a_i')$ and $(n_j',a_j')$, the proof is similar.
\end{proof}

\section{Adjustments in one row}

\par 
The aim of this section is to show that if $S$ maximizes $f(S)$, then $a_{(i-1)m+1}-a_{im}\leq 2$ for $1\leq i\leq t$, just as Example 1.2 shows.
The following lemma gives some properties of a generalized-$\beta$-set $S$ that maximizes $f(S)$.






\begin{lemma} \label{row}
Assume that a generalized-$\beta$-set $S$ maximizes $f(S)$. Then we have the following conclusions:
\begin{enumerate}
    \item[(1)] If $A_{t-1}-A_t\ge 2m$, then
\[
a_{(i-1)m+1}-a_{im}\leq 1
\]
for $1\leq i\leq t$.
\item[(2)] If $A_{t-1}-A_t\leq 2m-1$, by Proposition \ref{<2m}, there exists $1\leq p<m$, such that $a_{(t-1)m}=a_{(t-1)m+1}=\cdots=a_{(t-1)m+p}=1$ and $a_{(t-1)m+p+1}=\cdots=a_{tm}=0$. Then
\[
a_{(i-1)m+1}-a_{(i-1)m+p}\leq 1
\]
and
\[
a_{(i-1)m+p+1}-a_{im}\leq 1
\]
for $1\leq i\leq t$.
\end{enumerate}
\end{lemma}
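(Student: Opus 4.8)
The plan is to prove both parts by contradiction, using the exchange argument already packaged in Corollary~\ref{compare}. Suppose $S$ maximizes $f(S)$ but violates one of the claimed inequalities; I will produce a new set $S'\in\mathbb{D}_m(s)$ (in fact a generalized-$\beta$-set, so that comparison is legitimate) with $f(S')>f(S)$, contradicting maximality. By Lemma~\ref{initial} I may assume $S\in\mathbb{C}_m(s)$, so each $\mathcal{B}_i$ is a block of consecutive integers whose top residue $n_i$ is pinned down by the dichotomy in \eqref{eq: 5.2}. The whole argument therefore reduces to bookkeeping on the integer vectors $(a_i)$ and $(n_i)$, with $f$ computed via Lemma~\ref{lem:5.7}.

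\textbf{Part (1).} Assume $A_{t-1}-A_t\ge 2m$ and suppose for contradiction that $a_{(i-1)m+1}-a_{im}\ge 2$ for some $1\le i\le t$. Since Condition~(2) of Definition~\ref{def:4.2} forces $a_{(i-1)m+1}\ge a_{(i-1)m+2}\ge\cdots\ge a_{im}$ within a single row, a drop of at least $2$ across the row means there is an index $(i-1)m+1\le \ell<im$ where $a_\ell\ge a_{\ell+1}+1$ while simultaneously some other index in the same row is ``high.'' The idea is to move one bead from the tallest column down into a lower one within the row-structure: decrease the pair $(n_\ell,a_\ell)$ by one step and increase a later pair $(n_j,a_j)$ by one step, exactly matching the hypotheses of Corollary~\ref{compare} with $i<j$. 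The gain $y_0-x_0>0$ comes from the fact that I am adding a bead at a larger abacus value than the one I remove. The delicate point is to choose the two indices so that the resulting $S'$ still lies in $\mathbb{C}_m(s)$: I must check that after the swap the sequences $(a_i)$ and $(n_i)$ still satisfy Conditions (1)--(6) of Definition~\ref{def:4.2}, in particular the ``$-2$'' spacing Condition~(3) between rows $i$ and $i+m$. This is precisely where Lemma~\ref{One Line} enters: the hypothesis $A_{t-1}-A_t\ge 2m$ (and more generally $A_i-A_{i+1}\ge 2m$ from Proposition~\ref{<2m}(1)) gives the column-sum gap $\ge 2n$ needed to conclude $a_k\le a_{k+m}'-2$ termwise, so Condition~(3) survives the perturbation.

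\textbf{Part (2).} Here $A_{t-1}-A_t\le 2m-1$, so Proposition~\ref{<2m}(2) already hands me the flat tail: $a_{(t-1)m}=\cdots=a_{(t-1)m+p}=1$ and $a_{(t-1)m+p+1}=\cdots=a_{tm}=0$. Because the last row is split into a block of $1$'s followed by a block of $0$'s, the single bound $a_{(i-1)m+1}-a_{im}\le 1$ is no longer the right statement; instead the row splits at position $p$, and I prove the two finer inequalities $a_{(i-1)m+1}-a_{(i-1)m+p}\le 1$ and $a_{(i-1)m+p+1}-a_{im}\le 1$ separately. Each is handled by the same exchange-plus-Corollary~\ref{compare} mechanism as in Part~(1), now applied within the sub-block $\{(i-1)m+1,\dots,(i-1)m+p\}$ and within $\{(i-1)m+p+1,\dots,im\}$ respectively; the split point $p$ is the location forced by the tail, so the two sub-blocks never interact and the monotonicity from Condition~(2) keeps each sub-block well-ordered.

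\textbf{Main obstacle.} I expect the routine part to be computing $f(S')-f(S)$ and confirming it is positive; the genuine difficulty is verifying that every candidate $S'$ I build by beadmoving remains a \emph{generalized}-$\beta$-set, i.e. that no perturbation breaks Condition~(3) (the inter-row $-2$ gap) or the pinned residue relation \eqref{eq: 5.2}. Managing this is exactly the role of Lemma~\ref{One Line}, and choosing the swap so that both the within-row monotonicity (Condition~(2)) and the residue conditions (4)--(5) are preserved will require care about whether the moved bead sits at residue $s-i-1$ or $s-i-2$; the dichotomy in \eqref{eq: 5.2} means a change in $a$ may or may not force a compensating change in $n$, and I must track which case occurs so that the ``$(n_i-1,a_i-1)$ or $(n_i,a_i-1)$'' alternatives of Corollary~\ref{compare} are correctly invoked.
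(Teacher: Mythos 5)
Your overall strategy (argue by contradiction, perturb $S$ to a set with larger $f$, use Corollary~\ref{compare} for the gain and Lemma~\ref{One Line} for Condition~(3)) matches the paper's, but there is a genuine gap at the heart of the argument: you assume that a single bead move of the type covered by Corollary~\ref{compare} always suffices and always lands back in $\mathbb{B}_m(s)$ (or at least $\mathbb{C}_m(s)$). This fails. Because the residues are pinned by \eqref{eq: 5.2}, adding one bead to column $(i+1)m$ when $a_{(i+1)m-1}=a_{(i+1)m}+1$ forces, via Condition~(5) of Definition~\ref{def:4.2} and the dichotomy in \eqref{eq: 5.2}, a simultaneous downward shift of $n_{(i+1)m-r}$ for an entire run of $w$ columns. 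This is a multi-column adjustment not of the form in Corollary~\ref{compare}: the net change is not ``$y_0-x_0>0$'' but rather a sum of the form $ws-w(a_{(i+1)m}+1)$, whose positivity requires the separate bound $a_{(i+1)m}\le s-1$. Your proposal never confronts this case, and it is the main technical content of the proof.

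The second missing ingredient is that even after such an adjustment the new set $S'$ need not be a generalized-$\beta$-set at all (the paper explicitly flags this), so a one-step exchange does not yet yield a contradiction with the maximality of $S$ over $\mathbb{B}_m(s)$. The actual argument iterates the adjustment, uses the boundedness of $f$ on $\mathbb{D}_m(s)$ to guarantee termination at a final set $S''$ satisfying the row inequalities, and only \emph{then} verifies that $S''\in\mathbb{B}_m(s)$ --- and this is precisely where Lemma~\ref{One Line} is usable, because the adjustments preserve the partial column sums $U_i,V_i$ and the terminal row profile has spread at most $1$, which are the hypotheses of that lemma. Invoking Lemma~\ref{One Line} on an intermediate set, as your sketch suggests, is not legitimate since its flatness hypothesis $|x_n-x_1|\le 1$ is exactly what you are trying to establish. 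To repair the proposal you would need to (i) spell out the case analysis on whether the receiving run of columns has $a_{(i+1)m-1}$ equal to $a_{(i+1)m}$ or exceeding it by $1$ or by $\ge 2$, with the corresponding multi-column shifts and explicit positivity estimates, and (ii) add the iteration-and-termination argument with the final verification of Conditions~(1)--(6) for $S''$.
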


\begin{remark}
Note that the inequality $a_{i+m}\leq a_{i}-2$ may be violated when $a_{i+m}=0$. This needs to be discussed in the following proof.
\end{remark}

\begin{proof}

\par By Lemma \ref{initial}, $S\in\mathbb{C}_m(s)$.  
{We prove Conclusion (2) here. The proof of Conclusion (1) is similar and omitted. Assume that Conclusion (2) is not true}. {Then either there exists some~${0\leq i\leq t-2}$, such that ${a_{im+p+1}-a_{(i+1)m}\ge 2}$; or there exists some~${0\leq j\leq t-2}$, such that ${a_{jm+1}-a_{jm+p}\ge 2}$}. These two cases are similar. Without loss of generality, we consider the first case, i.e., there exists some~$0\leq i\leq t-2$, such that $a_{im+p+1}-a_{(i+1)m}\ge 2$.

Assume that $a_{im+p+1}=a_{im+p+2}=\cdots=a_{im+p+u}>a_{im+p+u+1}$ and $a_{(i+1)m}=a_{(i+1)m-1} \\ =\cdots=a_{(i+1)m-v}<a_{(i+1)m-v-1}$ for some positive integers $u$ and $v$. Then $u+v<m-p$. We will do adjustments to $S\cap [(im+p)s,(i+1)ms-1]$, which can be divided into the following three cases.


    \begin{enumerate}
    
    \item[(1)] {${v=0}$ and ${a_{(i+1)m-1}\ge a_{(i+1)m}+2}$}. Let $S'\in\mathbb{D}_m(s)$ satisfy the following conditions:
\begin{enumerate}
    \item $t'=t$;
    \item For $1\leq k\leq tm$, $\left(n_k',a_k'\right)=\left(n_k,a_k\right)$ if $k\neq (i+1)m$ and $k\neq (i+1)m-1$;
    \item If ${a_{(i+1)m-1} = a_{(i+1)m}+2}$, then \[
    \left(n_{(i+1)m-1}',a_{(i+1)m-1}'\right)=\left(n_{(i+1)m-1}-1,a_{(i+1)m-1}  -1\right);
    \]
    If ${a_{(i+1)m-1}\ge a_{(i+1)m}+3}$, then
    \[
    \left(n_{(i+1)m-1}',a_{(i+1)m-1}'\right)=\left(n_{(i+1)m-1},a_{(i+1)m-1}  -1\right);
    \]
    \item $\left(n_{(i+1)m}',a_{(i+1)m}'\right)=\left(n_{(i+1)m},a_{(i+1)m}+1\right)$.
\end{enumerate}It's easy to check that $S'$ is well-defined and $|S|=|S'|$. Then by Corollary \ref{compare}, we have $f(S')>f(S)$.
    
    \item[(2)] {${v=0}$ and ${a_{(i+1)m-1}=a_{(i+1)m}+1}$}. Assume that $a_{(i+1)m-1}=\cdots=a_{(i+1)m-w}<a_{(i+1)m-w-1}$. Then $w + u < m - p$ since $a_{im+p+1}-a_{(i+1)m}\ge 2$. Let $S'\in\mathbb{D}_m(s)$ satisfy the following conditions:
\begin{enumerate}
    \item $t'=t$;
    \item For $1\leq k\leq tm$, $(n_k',a_k')=(n_k,a_k)$ if $k\not\in\{im+p+u,(i+1)m-w,(i+1)m-w+1,\cd,(i+1)m\}$;
    \item If $a_{im+p+u} = a_{(i+1)m}+2$, then $\left(n_{im+p+u}',a_{im+p+u}'\right)=\left(n_{im+p+u}-1,a_{im+p+u}-1\right)$; If $a_{im+p+u} \ge a_{(i+1)m}+3$, then $\left(n_{im+p+u}',a_{im+p+u}'\right)=\left(n_{im+p+u},a_{im+p+u}-1\right)$;
    \item For $1\leq r\leq w$, $\left(n_{(i+1)m-r}',a_{(i+1)m-r}'\right)=\left(n_{(i+1)m-r}-1,a_{(i+1)m-r}\right)$;
    \item $\left(n_{(i+1)m}',a_{(i+1)m}'\right)=\left(n_{(i+1)m},a_{(i+1)m}+1\right)$.
\end{enumerate}It's easy to check that $S'$ is well-defined and $|S|=|S'|$. Let  $D(i)$ denote $\sum_{x\in\mathcal{B}_i'}x-\sum_{x\in\mathcal{B}_i}x$, then
    
    \begin{align*}
    f(S')-f(S)
     & = D_{im+p+u} + D_{(i+1)m} + \sum_{r=1}^w D_{(i+1)m-r} \\
     &\ge ws - w (a_{(i+1)m}+1) \\
    & =  w(s-1-a_{(i+1)m}) \\
    & > 0.
    \end{align*}
    Thus $f(S')>f(S)$. {Notice that here $S'$ may not be a generalized-$\beta$-set}.
    \item[(3)]  {${v>0}$}. The construction is similar and omitted. 
        
    \end{enumerate}

\par The adjustments when there exists some~${0\leq j\leq t-2}$, such that ${a_{jm+1}-a_{jm+p}\ge 2}$ are similar. 
The process above can be repeated in $S'$ if there exists some $0\leq i\leq t-2$, such that $a_{im+p+1}'-a_{(i+1)m}'\ge 2$ or there exists some $0\leq j\leq t-2$, such that $a_{jm+1}'-a_{jm+p}'\ge 2$. Notice that for any $T\in\mathbb{D}_m(s)$, $f(T)\leq \lceil (s-1)/2 \rceil^2 m^2 s^2$. Also
\[
f(S') \ge f(S) + 1,
\]
thus the adjustments will stop after finite steps, when we get a final set $S''$, such that $a_{(i-1)m+1}''-a_{(i-1)m+p}''\leq 1$ and $a_{(i-1)m+p+1}''-a_{im}''\leq 1$ for $1\leq i<t$. For $S$, set
\[
U_i=\sum_{j=1}^{p} a_{(i-1)m+j}
\]
and
\[
V_i=\sum_{j=p+1}^{m} a_{(i-1)m+j}
\]
for $1\leq i\leq t$. The analog notations $U_i''$ and $V_i''$ are for $S''$. By the discussion above, for $1\leq i\leq t$, we have $U_i=U_i''$ and $V_i=V_i''$. Since $S$ is a generalized-$\beta$-set, we obtain
\[
U_i-U_{i+1}\ge2p
\]for $1\leq i\leq t-1$ and 
\[
V_i-V_{i+1}\ge2(m-p)
\]for $1\leq i\leq t-2$.
From Lemma \ref{One Line}, $S''$ satisfies Condition (3) in Definition \ref{def:4.2}. It is easy to check that $S''$ satisfies Conditions (1)(2)(6) in Definition \ref{def:4.2}. Condition (2) in Lemma \ref{initial} hold for $S'$ in every case of adjustment. So 
$S''$ satisfies Conditions (4)(5) in Definition \ref{def:4.2}. Thus $S''$ is also a generalized-$\beta$-set and ${f(S)<f(S'')}$, which contradict the assumption that $S$ is a generalized-$\beta$-set that maximizes $f(S)$.
\end{proof}

With this lemma, we can control the shape of a certain row of a generalized-$\beta$-set $S$ that maximizes $f(S)$.

\begin{definition}
    Let $\mathbb{E}_m(s)$ be the set of all $S\in\mathbb{C}_m(s)$ such that $S$ satisfies Conclusions (1) and (2) in Lemma \ref{row}.
\end{definition}

By Lemma \ref{row}, we obtain the following corollary.

\begin{corollary}
We have $\mathbb{E}_m(s)\subseteq\mathbb{C}_m(s)\subseteq\mathbb{B}_m(s)$ and
\begin{equation*}
    {\arg}\max_{S'\in\mathbb{B}_m(s)}f(S')={\arg}\max_{S''\in\mathbb{C}_m(s)}f(S'')={\arg}\max_{S'''\in\mathbb{E}_m(s)}f(S''').
\end{equation*}
\end{corollary}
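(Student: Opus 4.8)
The plan is to reduce everything to the two technical lemmas already proved, together with a short abstract argument about maximizers over nested sets; no new combinatorics is needed. First I would dispose of the inclusions, which are immediate: $\mathbb{E}_m(s)\subseteq\mathbb{C}_m(s)$ is exactly the definition of $\mathbb{E}_m(s)$, since the latter is defined as the collection of those $S\in\mathbb{C}_m(s)$ that additionally satisfy Conclusions (1) and (2) of Lemma \ref{row}, while $\mathbb{C}_m(s)\subseteq\mathbb{B}_m(s)$ is part (2) of the preceding lemma. Hence the only real content is the chain of equalities between the sets of maximizers.

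The key observation I would isolate is that every maximizer of $f$ over the largest set $\mathbb{B}_m(s)$ in fact already lies in the smallest set $\mathbb{E}_m(s)$. Indeed, if $S\in\mathbb{B}_m(s)$ maximizes $f$, then Lemma \ref{initial} forces $S$ to satisfy Conditions \eqref{eq: 5.1} and \eqref{eq: 5.2}, so that $S\in\mathbb{C}_m(s)$; and since $S$ is still a generalized-$\beta$-set maximizing $f$, Lemma \ref{row} applies to it and forces Conclusions (1) and (2), so in fact $S\in\mathbb{E}_m(s)$. Combined with the inclusions above, this produces the sandwich
\[
{\arg}\max_{S\in\mathbb{B}_m(s)}f(S)\subseteq\mathbb{E}_m(s)\subseteq\mathbb{C}_m(s)\subseteq\mathbb{B}_m(s).
\]

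With this in hand the equalities follow by a routine maximization argument, which I would spell out once. Choosing any $S^*\in{\arg}\max_{S\in\mathbb{B}_m(s)}f(S)$, the sandwich gives $S^*\in\mathbb{E}_m(s)$, so $\max_{\mathbb{E}_m(s)}f\ge f(S^*)=\max_{\mathbb{B}_m(s)}f$; the reverse inequality is clear from $\mathbb{E}_m(s)\subseteq\mathbb{B}_m(s)$, and the same reasoning applies to $\mathbb{C}_m(s)$, so the three maxima coincide. For the sets of maximizers themselves, any element of ${\arg}\max_{S\in\mathbb{B}_m(s)}f(S)$ lies in $\mathbb{E}_m(s)$ and attains the common maximal value, hence lies in ${\arg}\max_{S\in\mathbb{E}_m(s)}f(S)$ and in ${\arg}\max_{S\in\mathbb{C}_m(s)}f(S)$; conversely any maximizer of $f$ over $\mathbb{E}_m(s)$ or over $\mathbb{C}_m(s)$ attains that same value and belongs to $\mathbb{B}_m(s)$, hence maximizes $f$ over $\mathbb{B}_m(s)$. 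This yields the equality of all three sets of maximizers.

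I do not anticipate a genuine obstacle, since all the difficulty has been absorbed into Lemmas \ref{initial} and \ref{row}. The single point that needs care is the legitimacy of applying Lemma \ref{row} to a maximizer taken over the full set $\mathbb{B}_m(s)$ rather than only over $\mathbb{C}_m(s)$ or $\mathbb{E}_m(s)$: this is valid because the hypothesis of Lemma \ref{row} asks only that $S$ be a generalized-$\beta$-set maximizing $f$, which is precisely membership in ${\arg}\max_{S\in\mathbb{B}_m(s)}f(S)$, and the proof of that lemma itself opens by invoking Lemma \ref{initial} for exactly such an $S$.
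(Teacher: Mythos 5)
Your proposal is correct and follows essentially the same route as the paper, which derives this corollary directly from Lemma \ref{row} (every maximizer over $\mathbb{B}_m(s)$ already lies in $\mathbb{E}_m(s)$) together with the inclusions $\mathbb{E}_m(s)\subseteq\mathbb{C}_m(s)\subseteq\mathbb{B}_m(s)$; the paper leaves the routine sandwich argument implicit, which you have merely spelled out.
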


Next, we give an example of the adjustments in Lemma \ref{row}.

\begin{example}
\par Let $m=3$ and $s=6$. Figure \ref{RowBefore} shows the generalized-$\beta$-set $S$ before the adjustment. Figure \ref{RowAfter} shows the generalized-$\beta$-set $S'$ after the  adjustment. 

Notice that $S'=S\setminus\{11\}\cup\{13\}$. We color $S \setminus S'$ blue and $S' \setminus S$ red. The adjustment is on the first row and follows case (1) in the proof, where $i=v=0$ and $a_{(i+1)m-1}=a_{(i+1)m}+2$. We can see that $(n_2,a_2)=(5,5),(n_3,a_3)=(4,3),(n_2',a_2')=(4,4),(n_3',a_3')=(4,4)$. Indeed, $(n_2',a_2')=(n_2-1,a_2-1)$ and $(n_3',a_3')=(n_3,a_3+1)$. We can verify that $f(S')-f(S)=68-66=13-11>0.$

    \begin{figure}[h!]
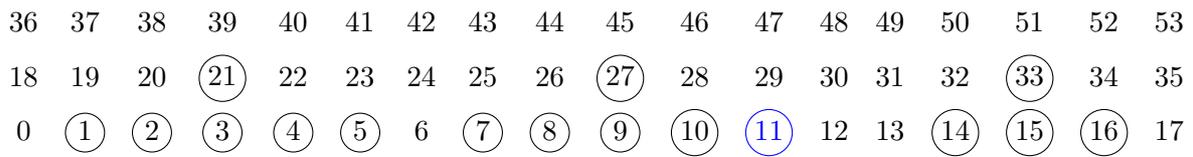

    \centering
    \[ \small
    \begin{array}{cccccccccccccccccc}
    36 & 37 & 38 & 39 & 40 & 41 & 42 & 43 & 44 & 45 & 46 & 47 & 48 & 49 & 50 & 51 & 52 & 53 \\
    18 & 19 & 20 & \encircle{21} & 22 & 23 & 24 & 25 & 26 & \encircle{27} & 28 & 29 & 30 & 31 & 32 & \encircle{33} & 34 & 35\\
    0 & \encircle{1} & \encircle{2} & \encircle{3} & \encircle{4} & \encircle{5} & 6 & \encircle{7} & \encircle{8} & \encircle{9} & \encircle{10} & \enblue{11} & 12 & 13 & \encircle{14} & \encircle{15} & \encircle{16} & 17\\
    
    \end{array}
    \]
    \caption{Before adjustments.}
    \label{RowBefore}
    \end{figure}

    \begin{figure}[h!]
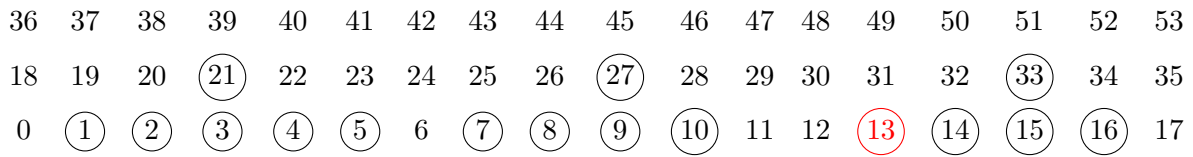

    \centering
    \[ \small
    \begin{array}{cccccccccccccccccc}
    36 & 37 & 38 & 39 & 40 & 41 & 42 & 43 & 44 & 45 & 46 & 47 & 48 & 49 & 50 & 51 & 52 & 53 \\
    18 & 19 & 20 & \encircle{21} & 22 & 23 & 24 & 25 & 26 & \encircle{27} & 28 & 29 & 30 & 31 & 32 & \encircle{33} & 34 & 35\\
    0 & \encircle{1} & \encircle{2} & \encircle{3} & \encircle{4} & \encircle{5} & 6 & \encircle{7} & \encircle{8} & \encircle{9} & \encircle{10} & 11 & 12 & \enred{13} & \encircle{14} & \encircle{15} & \encircle{16} & 17\\
    
    \end{array}
    \]
    \caption{After adjustments.}
    \label{RowAfter}
    \end{figure}

\end{example}

\section{Adjustments in different rows}

\par In this section, we aim to prove that if a generalized-$\beta$-set $S\subseteq\mathcal{L}_m(s)$ maximizes $f(S)$, then for nonzero $a_i$ and $a_{i+m}$,  we have $a_{i}=a_{i+m}+2$, 
thus the equality of Condition (3) in Definition \ref{def:4.2} holds.

\begin{lemma}\label{Normal}
Let $S$ be a generalized-$\beta$-set that maximizes $f(S)$. 
Then we have the following conclusions:
\begin{enumerate}

    \item[(1)] If $A_{t-1}-A_t\ge 2m$, then
\[
a_{i}=a_{i+m}+2
\]for $1\leq i\leq (t-1)m$.

\item[(2)] If $A_{t-1}-A_t\leq 2m-1$, by Proposition \ref{<2m}, assume that $a_{(t-1)m+1}=\cd=a_{(t-1)m+p}=1>a_{(t-1)m+p+1}$. Then
\[
a_{i}=a_{i+m}+2
\]for $1\leq i\leq (t-2)m+p$ and $a_{(t-2)m+p}=3>a_{(t-2)m+p+1}$.

\end{enumerate}

\end{lemma}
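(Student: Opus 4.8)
The plan is to mimic the proof strategy of Lemma \ref{row}: assume $S$ maximizes $f(S)$, suppose the desired equality fails somewhere, and then construct a new set $S'\in\mathbb{D}_m(s)$ with $f(S')>f(S)$ that remains a generalized-$\beta$-set, yielding a contradiction. By Lemma \ref{initial} and Lemma \ref{row}, we may assume $S\in\mathbb{E}_m(s)$, so within each block of $m$ consecutive rows the values $a_{(i-1)m+1},\ldots,a_{im}$ drop by at most $1$ across consecutive entries (in the appropriate grouping), and the $n$-values are pinned to the extremal choices in \eqref{eq: 5.2}. Condition (3) in Definition \ref{def:4.2} already guarantees $a_{i}\ge a_{i+m}+2$ whenever $a_{i+m}>0$; so the content of the lemma is to upgrade this inequality to equality, i.e.\ to rule out $a_i\ge a_{i+m}+3$.

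For Conclusion (1), I would suppose for contradiction that $a_i\ge a_{i+m}+3$ for some $1\leq i\leq (t-1)m$, and I would look at the ``column'' of rows $i, i+m, i+2m,\ldots$ obtained by fixing the residue class modulo $m$. The idea is to move a single bead from the row where the gap is too large down into a lower row of the same residue class (thereby increasing $f$, since by Corollary \ref{compare} shifting a bead from a smaller position $x_0$ to a larger available position $y_0$ increases $f$), or to redistribute within a block. Concretely, if $a_i-a_{i+m}\ge 3$, I would decrease $a_i$ by one and increase $a_{i+m}$ by one (adjusting the corresponding $n$-values as dictated by \eqref{eq: 5.2}, exactly as in the three cases of Lemma \ref{row}), producing $S'$ with $|S'|=|S|$ and $f(S')>f(S)$ by Corollary \ref{compare}. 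The delicate point is that this local move must not destroy the monotonicity Conditions (2) and (6) in adjacent rows, so as in Lemma \ref{row} I expect to need an auxiliary adjustment spread across several rows of the block, governed by Lemma \ref{One Line} to re-verify Condition (3).

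For Conclusion (2), the argument is the same in spirit but must handle the ``boundary'' block carefully: here $A_{t-1}-A_t\leq 2m-1$ forces, via Proposition \ref{<2m}, the tail pattern $a_{(t-1)m+1}=\cdots=a_{(t-1)m+p}=1>a_{(t-1)m+p+1}=\cdots=a_{tm}=0$. I would run the same bead-moving adjustment to force $a_i=a_{i+m}+2$ for $1\leq i\leq (t-2)m+p$, and then separately establish the extra assertion $a_{(t-2)m+p}=3>a_{(t-2)m+p+1}$: since $a_{(t-1)m+p}=1$ and we have just shown $a_{(t-2)m+p}=a_{(t-1)m+p}+2=3$, while the strict inequality $a_{(t-2)m+p}>a_{(t-2)m+p+1}$ follows from Condition (2) of Definition \ref{def:4.2} together with the fact that $a_{(t-1)m+p+1}=0$ forces $a_{(t-2)m+p+1}\leq 2<3$ once equality \eqref{eq} is applied one column over—here I must be careful with the edge case flagged in the Remark after Lemma \ref{row}, namely that $a_{i+m}\le a_i-2$ can fail precisely when $a_{i+m}=0$, so the column containing the zero tail requires separate handling.

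The main obstacle I anticipate is not the inequality $f(S')>f(S)$ itself, which follows cleanly from Corollary \ref{compare}, but rather verifying that each adjusted set $S'$ still lies in $\mathbb{B}_m(s)$—in particular re-establishing Conditions (3), (4), (5) of Definition \ref{def:4.2} after a bead is moved between rows of the same residue class. As in Lemma \ref{row}, a naive single-bead move can break the non-increasing pattern of the $a_j$ within a block, so the real work is to design the compensating multi-row adjustment (the analogues of cases (1), (2), (3) in the proof of Lemma \ref{row}) and to invoke Lemma \ref{One Line} with the block sums $U_i,V_i$ to confirm Condition (3) survives. The boundary analysis for Conclusion (2), where $a_{i+m}=0$ invalidates the naive gap bound, is where I expect the argument to require the most care.
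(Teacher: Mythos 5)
There is a genuine gap at the heart of your argument. Your plan for the critical case is: if $a_i\ge a_{i+m}+3$, move one bead from row $i$ to row $i+m$ (decrease $a_i$ by $1$, increase $a_{i+m}$ by $1$) and invoke Corollary \ref{compare}. But when $a_i-a_{i+m}=3$ exactly, the adjusted set has $a_i'-a_{i+m}'=1$, which violates Condition (3) of Definition \ref{def:4.2}, so $S'$ is not a generalized-$\beta$-set and no contradiction follows. Worse, no intra-block redistribution can repair this: if the block sums satisfy $A_j-A_{j+1}=2m+1$ (one residue with gap $3$, the rest with gap $2$), then after transferring any single bead between the two blocks the sums differ by $2m-1<2m$, and Proposition \ref{<2m}(1) shows such a set cannot be a generalized-$\beta$-set at all. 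So the ``compensating multi-row adjustment governed by Lemma \ref{One Line}'' that you anticipate simply does not exist for this case. This is precisely why the paper splits the proof into two steps: Step 1 uses local adjustments (of the kind you describe) only to rule out $A_j-A_{j+1}\ge 2m+2$ and to rule out \emph{two} indices with $A_j-A_{j+1}=2m+1$; Step 2 then handles the single remaining index $l$ with $A_l-A_{l+1}=2m+1$ by a completely different, global argument. It builds a one-parameter family $S_0,\dots,S_t$ obtained by sliding the breakpoint of the ``extra bead column'' (the residue class $q$ where the gap is $3$) through all rows, verifies $S=S_l$ with $0<l<t$, and computes the second difference $f(S_{n+2})-2f(S_{n+1})+f(S_n)=ms-2>0$ (or $m(s+2)-2>0$ in the other subcase), so that $f$ is strictly convex along the family and is maximized only at an endpoint --- contradicting $S=S_l$. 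This convexity-along-a-column idea is the essential ingredient missing from your proposal.

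A secondary, smaller gap concerns the tail claims in Conclusion (2). You correctly flag that Condition (3) gives no information when $a_{(t-1)m+p+1}=0$, but your resolution (``$a_{(t-2)m+p+1}\le 2$ once equality is applied one column over'') is circular: that bound is exactly what must be proved. The paper establishes $a_{(t-2)m+p+1}\le 2$ by yet another adjustment (adjoining the element $((t-1)m+p+1)s-t$ to $S$ and checking the result is still a generalized-$\beta$-set of larger $f$), and rules out $a_{(t-2)m+1}\ge 4$ by a further column adjustment with an explicit estimate of $f(S')-f(S)$. These steps need to be supplied; they do not follow from the equalities $a_i=a_{i+m}+2$ already obtained.
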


\begin{proof}
\par By Lemma \ref{row}, we know that $S\in\mathbb{E}_m(s) $.
\par \tb{Step 1.} First, we prove the following weaker version of Lemma \ref{Normal}:

\begin{enumerate}
    \item[(1)'] If $A_{t-1}-A_t\ge 2m$, then $2m \leq A_i-A_{i+1} \leq 2m+1$ for $1\leq i\leq t-1$. The equation $ A_i-A_{i+1} = 2m+1$ only holds by at most one $i$.
    \item[(2)'] If $A_{t-1}-A_t<2m$, then $2m \leq A_i-A_{i+1} \leq 2m+1$ for $1\leq i\leq t-2$. The equation $ A_i-A_{i+1} = 2m+1$ only holds by at most one $i$.
\end{enumerate}

\par First, we deal with the case $A_{t-1}-A_t\ge 2m$.  {We prove Conclusion (1)' by contradiction. Assume that Conclusion (1)' is not true}. There are two possible cases.

\begin{enumerate}

    \item[(1)] {There exists ${1\leq i\leq (t-1)m}$ such that ${A_{i}-A_{i+1} \ge 2m+2}$}. By Lemma \ref{row}, $0\leq a_{(i-1)m+1}-a_{im}\leq 1$ and $0\leq a_{im+1}-a_{(i+1)m}\leq 1$. Let $S'\in\mathbb{D}_m(s)$ satisfy $(n_k',a_k')=(n_k,a_k)$ for $k<(i-1)m+1$ and $k> (i+1)m$, and furthermore satisfy the following conditions:
    \begin{enumerate}
        \item If $a_{(i-1)m+1}=a_{im}$, then let $\left(n_{(i-1)m+l}',a_{(i-1)m+l}'\right)=\left(n_{(i-1)m+l}+1,a_{(i-1)m+l}\right)$ for $1\leq l\leq m-1$ and $\left(n_{im}',a_{im}'\right)=\left(n_{im},a_{im}-1\right)$;
        \item If $a_{(i-1)m+1}=a_{im}+1$, assume that $a_{im}=a_{im-1}=\cdots=a_{im-u+1}<a_{im-u}$. Let $\left(n_{im-l}',a_{im-l}'\right)=\left(n_{im-l},a_{im-l}\right)$ for $0\leq l\leq m-1$ and $l\neq u$  and $\left(n_{im-u}',a_{im-u}'\right) \\ =\left(n_{im-u}-1,a_{im-u}-1\right)$;
        \item If $a_{(i+1)m-1}=a_{(i+1)m}+1$, let $\left(n_{im+l}',a_{im+l}'\right)=\left(n_{im+l}-1,a_{im+l}\right)$ for $1\leq l\leq m-1$ and $\left(n_{(i+1)m}',a_{(i+1)m}'\right)=\left(n_{(i+1)m},a_{(i+1)m}+1\right)$;
        \item If $a_{(i+1)m-1}=a_{(i+1)m}$ and $a_{im+1}=a_{(i+1)m}+1$, then we can assume that $a_{(i+1)m}=a_{(i+1)m-1}=\cdots=a_{(i+1)m-v}<a_{(i+1)m-v-1}$ for positive integer $v$. Let \[\left(n_{(i+1)m-v}',a_{(i+1)m-v}'\right)= 
 \left(n_{(i+1)m-v}+1,a_{(i+1)m-v}+1\right)\]and $\left(n_{(i+1)m-l}',a_{(i+1)m-l}'\right)=\left(n_{(i+1)m-l},a_{(i+1)m-l}\right)$ for $0\leq l\leq m-1$ and $l\neq v$;
        \item If $a_{im+1}=a_{(i+1)m}$, let \[
\left(n_{im+1}',a_{im+1}'\right)=\left(n_{im+1}+1,a_{im+1}+1\right)
        \] and $\left(n_{im+l}',a_{im+l}'\right)$
        $=\left(n_{im+l},a_{im+l}\right)$ for $2\leq l\leq m$.
    \end{enumerate}Adjustments in the $i$-th row involve Cases (a) and (b). Adjustments in the $i+1$-th row involve Cases (c), (d) and (e). Obviously $S'\in\mathbb{E}_m(s) $. If (c) doesn't hold, then $f(S)<f(S')$.  
    {If (c) holds, there are two possibilities: (b) and (c) hold simultaneously or (a) and (c) hold simultaneously.} Recall that $D(i)$ denote $\sum_{x\in\mathcal{B}_i'}x-\sum_{x\in\mathcal{B}_i}x$. 
        
        \par  {Assume that (b) and (c) hold simultaneously}. Notice that $a_{(i+1)m} \leq s-2-i$, thus
        \begin{eqnarray*}
        && f(S')-f(S) \\
        & = & D\left((i+1)m\right)+D\left(im-u\right)+\sum_{j=1}^{m-1} D\left(im+j\right)  \\
        & \ge & ms - (m-1)\left(a_{(i+1)m}+1\right)\\
        & \ge & ms-(m-1)(s-1-i) \\
        & > & ms-ms=0.
        \end{eqnarray*}
        {Therefore, ${f(S')>f(S)}$. A contradiction}.
        
         \par {Assume that (a) and (c) hold simultaneously}. Similarly,
         
         \begin{eqnarray*}
         && f(S')-f(S) \\
         & = & D(im)+D((i+1)m)+\sum_{k=1}^{m-1}D((i-1)m+k)+\sum_{k=1}^{m-1}D(im+k) \\
         & \ge & (m-1)s+(m-1)a_{im}-(m-1)\left(a_{(i+1)m}+1\right) \\ 
         & \ge & (m-1)s+(m-1)\left(a_{(i+1)m}+2\right)-(m-1)\left(a_{(i+1)m}+1\right) \\
         & > & 0.
         \end{eqnarray*}
          {Therefore, ${f(S')>f(S)}$. A contradiction}.
    
    \item[(2)] {There exists ${i<j}$, such that ${A_i-A_{i+1} \ge 2m+1}$ and ${A_j-A_{j+1} \ge 2m+1}$}. Then we construct $S'\in\mathbb{D}_m(s) $ similar to Case (1). We first do the adjustments to the $i$-th row of $S$ similar to the adjustments to the $i$-th row in Case (1), and then do the adjustments to the $(j+1)$-th row of $S$ similar to the adjustments to the $i+1$-th row in Case (1). Then we obtain $S'$ after the adjustment. It is obvious that $S'\in\mathbb{E}_m(s)$. {Similar to the discussion in Case (1), ${f(S')>f(S)}$. A contradiction}.
    
\end{enumerate}

\par Next, we deal with the case $A_{t-1}-A_t<2m$.    In this case, by Proposition \ref{<2m}, there exists $1\leq p<m$, such that $a_{(t-1)m}=a_{(t-1)m+1}=\cdots=a_{(t-1)m+p}=1$ and $a_{(t-1)m+p+1}=\cdots=a_{tm}=0$. 
{We will prove Conclusion (2)' by contradiction. Assume that Conclusion (2)' isn't true}. Then either there exists $1\leq i\leq t-2$, such that $A_{i}-A_{i+1}\ge 2m+2$; or there exist $1\leq i<j\leq t-2$, such that $A_i-A_{i+1}=2m+1$ and $A_j-A_{j+1}=2m+1$. We will deduce a contradiction for each case.

Define
\[
\mathcal{L}_i=\bigcup_{j=1}^p\mathcal{B}_{(i-1)m+j}
\]
and
\[
\mathcal{R}_i=\bigcup_{j=p+1}^{m} \mathcal{B}_{(i-1)m+j}.
\]for $1\leq i\leq t$. Then their cardinal numbers are
\[
L_i=|\mathcal{L}_i|=\sum_{j=1}^{p} a_{(i-1)m+j},
\]
\[
R_i=|\mathcal{R}_i|=\sum_{j=p+1}^{m} a_{(i-1)m+j}.
\]

\par {Firstly, we deal with the case ${A_i-A_{i+1} \ge 2m+2}$}. We have the following three cases to consider.

\begin{enumerate}
    \item[(1)] $L_i-L_{i+1}\ge 2p+1$ and $R_i-R_{i+1}\ge 2(m-p)+1$. 
    
    
    \par Consider $S'\in\mathbb{D}_m(s)$ satisfying the following conditions:
    \begin{enumerate}
    \item $\left(n_k',a_k'\right)=(n_k,a_k)$ unless $(i-1)m+1\leq k\leq (i+1)m$;
        \item If $a_{(i-1)m+p+1}=a_{im}$, assume that $a_{(i-1)m+p+1}=a_{(i-1)m+p}=\cd=a_{(i-1)m+u}<a_{(i-1)m+u-1} $. Then $\left(n_k',a_k'\right)=(n_k+1,a_k)$ for $(i-1)m+u\leq k\leq im-1$ and $\left(n_{im}',a_{im}'\right)=(n_{im},a_{im}-1)$. Let $\left(n_k',a_k'\right)=(n_k,a_k)$ for $(i-1)m+1 \leq k\leq (i-1)m+u-1$.
        \item If $a_{(i-1)m+p+1}=a_{im}+1$, assume that $a_{im}=a_{im-1}=\cdots=a_{im-v-1}<a_{im-v}$. For $(i-1)m+1\leq k\leq im$, $\left(n_k',a_k'\right)=(n_k,a_k)$ if $k\neq im-v$ and $\left(n_{im-v}',a_{im-v}'\right)=\left(n_{im-v}-1,a_{im-v}-1\right)$;
        \item If $a_{im+1}=a_{(i+1)m}$, then $\left(n_{im+1}',a_{im+1}'\right)=(n_k+1,a_k+1)$ and $\left(n_k',a_k'\right)=(n_k,a_k)$ for $im+2\leq k\leq (i+1)m$;
        \item If $a_{im+1}=a_{im+p}>a_{(i+1)m}$, then $\left(n_{im+1}',a_{im+1}'\right)=(n_k,a_k+1)$ and $\left(n_k',a_k'\right)=(n_k,a_k)$ for $im+2\leq k\leq (i+1)m$;
        \item If $a_{im+1}>a_{im+p}=a_{(i+1)m}$, assume that $a_{im+1}=a_{im+2}=\cdots=a_{im+u-1}>a_{im+u}$. For $im+1\leq k\leq (i+1)m$, $\left(n_k',a_k'\right)=(n_k,a_k)$ if $k\neq im+u$ and $\left(n_{im+u}',a_{im+u}'\right)=\left(n_{im+u}+1,a_{im+u}+1\right)$;
        \item If $a_{im+1}>a_{im+p}>a_{(i+1)m}$, assume that $a_{im+1}=a_{im+2}=\cdots=a_{im+u-1}>a_{im+u}$. For $im+1\leq k\leq (i+1)m$, $\left(n_k',a_k'\right)=(n_k,a_k)$ if $k\neq im+u$ and $\left(n_{im+u}',a_{im+u}'\right)=\left(n_{im+u},a_{im+u}+1\right)$.
    \end{enumerate}
    Cases (b) and (c) are adjustments in $\mathcal{R}_i$. Cases (d),(e),(f) and (g) are adjustments in $\mathcal{L}_{i+1}$. Then $L_i'-L_{i+1}'\ge 2p$ and $R_i'-R_{i+1}'\ge 2(m-p)$. By Lemma \ref{One Line}, $a_{im+k}'\leq a_{(i-1)m+k}'-2$ for $1\leq k\leq m$. Condition (3) in Definition \ref{def:4.2} won't be violated after the adjustments. We can verify that $S'\in\mathbb{E}_m(s) $. Similar to the discussion for the case $A_{t-1}-A_t\ge 2m$, we obtain $|S|=|S'|$ and $f(S)<f(S')$. {Thus we reach a contradiction}.
    
    \item[(2)] {${L_i-L_{i+1}=2p}$ and ${R_i-R_{i+1}\ge 2(m-p)+2}$}. 
    
    
    \par Consider $S'\in\mathbb{D}_m(s)$ satisfying the following conditions:
    \begin{enumerate}
        \item $(n_k',a_k')=(n_k,a_k)$ unless $(i-1)m+1\leq k\leq (i+1)m$;
        \item If $a_{(i-1)m+p+1}=a_{im}$, assume that $a_{(i-1)m+u-1}>a_{(i-1)m+u}=\cd=a_{im-1}=a_{im} $. then $(n_k',a_k')=(n_k,a_k)$ for $(i-1)m+1\leq k\leq (i-1)m+u-1$, $(n_k',a_k')=(n_k+1,a_k)$ for $(i-1)m+u\leq k\leq im-1$ and $\left(n_{im}',a_{im}'\right)=\left(n_{im},a_{im}-1\right)$;
        \item If $a_{(i-1)m+p+1}=a_{im}+1$, assume that $a_{im}=a_{im-1}=\cdots=a_{im-v-1}<a_{im-v}$. For $(i-1)m+1\leq k\leq im$, $(n_k',a_k')=(n_k,a_k)$ if $k\neq im-v$ and $\left(n_{im-v}',a_{im-v}'\right)=\left(n_{im-v}-1,a_{im-v}-1\right)$;
        \item If $a_{im+p+1}=a_{(i+1)m}$, for $im+1\leq k\leq (i+1)m$, let $\left(n_k',a_k'\right)=(n_k,a_k)$ if $k\neq im+p+1$ and $\left(n_{im+p+1}',a_{im+p+1}'\right)=\left(n_{im+p+1}+1,a_{im+p+1}+1\right)$;
        \item If $a_{(i+1)m-1}=a_{(i+1)m}$ and $a_{im+p+1}=a_{(i+1)m}+1$, assume that $a_{(i+1)m}=a_{(i+1)m-1}=\cdots=a_{(i+1)m-v}<a_{(i+1)m-v-1}$. For $im+1\leq k\leq (i+1)m$, $(n_k',a_k')=(n_k,a_k)$ if $k\neq im-v$ and
        \[
        \left(n_{(i+1)m-v}',a_{(i+1)m-v}'\right)=\left(n_{(i+1)m-v}+1,a_{(i+1)m-v}+1\right);
        \]
        \item If $a_{im+p+1}=a_{(i+1)m-1}=a_{(i+1)m}+1$, assume that $a_{(i+1)m-1}=a_{(i+1)m-2}=\cdots=a_{(i+1)m-w}<a_{(i+1)m-w-1}$. Let $\left(n_{(i+1)m}',a_{(i+1)m}'\right)=\left(n_{(i+1)m},a_{(i+1)m}+1\right)$ and $\left(n_{k}',a_{k}'\right)=\left(n_{k}-1,a_{k}\right)$ for $(i+1)m-w \leq k \leq (i+1)m-1$.
    \end{enumerate}
    Case (b) and (c) are adjustments in $\mathcal{R}_i$. Case (d), (e) and (f) are adjustments in $\mathcal{R}_{i+1}$. We claim that Case (d) is valid. Otherwise, $S'$ violates Condition (2) in Definition \ref{def:4.2}, then
    \[
    a_{im+p}=a_{im+p+1}=\cdots=a_{(i+1)m}.
    \]Notice that $L_i-L_{i+1}=2p$ and $R_i-R_{i+1}\ge 2(m-p)+2$, we have $a_{(i-1)m+p}<a_{(i-1)m+p+1}$. A contradiction. Therefore, $S'$ satisfies Condition (2) in Definition \ref{def:4.2}. We can easily prove that $S'\in\mathbb{E}_m(s) $. When (f) doesn't happen, obviously $f(S)<f(S')$. When (f) happens, similar to the discussions in Case (1) when $A_{t-1}-A_t \ge 2m$, we can prove that $f(S)<f(S')$. {A contradiction}.
    
    \item[(3)] {${L_i-L_{i+1} \ge 2p+2}$ and ${R_i-R_{i+1}=2(m-p)}$}. It is similar to Case (2).

\end{enumerate}

\par Therefore, we obtain $2m\leq A_i-A_{i+1}\leq 2m+1$ for $1\leq i\leq t-2$. {Now we deal with the case that there exists ${1\leq i<j\leq t-2}$, such that ${A_i-A_{i+1}=2m+1}$ and ${A_j-A_{j+1}=2m+1}$}. There are four cases to be considered.

\begin{enumerate}
    \item[(1)] {${R_i-R_{i+1}=2(m-p)+1}$ and ${L_j-L_{j+1}=2p+1}$}. This case is similar to Case (1) when $A_i-A_{i+1}\ge 2m+2$. {We can still reach a contradiction}. 
    
    
    \item[(2)] $L_i-L_{i+1}=L_j-L_{j+1}=2p$ and $R_i-R_{i+1}=R_j-R_{j+1}=2(m-p)+1$. This case is similar to Case (2) when $A_i-A_{i+1}\ge 2m+2$. {We can still reach a contradiction}. 
    
    
    \item[(3)] $L_i-L_{i+1}=L_j-L_{j+1}=2p+1$ and $R_i-R_{i+1}=R_j- R_{j+1}=2(m-p)$. This case is similar to Case (3) when $A_i-A_{i+1}\ge 2m+2$. {We can still reach a contradiction}. 
    
    
    \item[(4)] {$L_i-L_{i+1}=2p+1$, $R_i-R_{i+1}=2(m-p)$, $L_j-L_{j+1}=2p$, $R_j-R_{j+1}=2(m-p)+1$}. 

    \par Assume that $a_{(i-1)m+1}=a_{(i-1)m+p+1}$. Since $L_i-L_{i+1}=2p+1$ and $R_i-R_{i+1}=2(m-p)$, $a_{im+p}=a_{(i-1)m+p}-3=a_{(i-1)m+p+1}-3=a_{im+p+1}-1<a_{im+p+1} $. A contradiction. Thus $a_{(i-1)m+1}>a_{(i-1)m+p+1}$. Similarly, $a_{jm+p}>a_{(j+1)m}$.

    \par Consider $S'\in\mathbb{D}_m(s)$ satisfying the following conditions:
    \begin{enumerate}
        \item $(n_k',a_k')=(n_k,a_k)$ unless $(i-1)m+1\leq k\leq im$ or $jm+1\leq k\leq (j+1)m$ ;
        \item If $a_{(i-1)m+1}=a_{(i-1)m+p}=a_{im}+1$, then $(n_k',a_k')=(n_k,a_k)$ for $(i-1)m+1\leq k\leq (i-1)m+p-1$ or $(i-1)m+p+1\leq k\leq im$ and $\left(n_{(i-1)m+p}',a_{(i-1)m+p}'\right)=\left(n_{(i-1)m+p}-1,a_{(i-1)m+p}-1\right)$;
        \item If $a_{(i-1)m+1}=a_{(i-1)m+p}>a_{im}+1$, then $(n_k',a_k')=(n_k,a_k)$ for $(i-1)m+1\leq k\leq (i-1)m+p-1$ or $(i-1)m+p+1\leq k\leq im$ and $\left(n_{(i-1)m+p}',a_{(i-1)m+p}'\right)=\left(n_{(i-1)m+p},a_{(i-1)m+p}-1\right)$;
        \item If $a_{(i-1)m+1}=a_{(i-1)m+p}+1=a_{im}+1$, assume that $a_{(i-1)m+1}=a_{(i-1)m+2}=\cdots=a_{(i-1)m+u}>a_{(i-1)m+u+1}$. For $(i-1)m+1\leq k\leq im$, $(n_k',a_k')=(n_k,a_k)$ if $k\neq (i-1)m+u$ and $\left(n_{(i-1)m+u}',a_{(i-1)m+u}'\right)=\left(n_{(i-1)m+u}-1,a_{(i-1)m+u}-1\right)$;
        \item If $a_{(i-1)m+1}=a_{(i-1)m+p}+1>a_{im}+1$, assume that $a_{(i-1)m+1}=a_{(i-1)m+2}=\cdots=a_{(i-1)m+u}>a_{(i-1)m+u+1}$. For $(i-1)m+1\leq k\leq im$, $(n_k',a_k')=(n_k,a_k)$ if $k\neq (i-1)m+u$ and $\left(n_{(i-1)m+u}',a_{(i-1)m+u}'\right)=\left(n_{(i-1)m+u},a_{(i-1)m+u}-1\right)$;
        \item If $a_{jm+p+1}=a_{(j+1)m}$, for $jm+1\leq k\leq (j+1)m$, let $\left(n_k',a_k'\right)=(n_k,a_k)$ if $k\neq jm+p+1$ and $\left(n_{jm+p+1}',a_{jm+p+1}'\right)=\left(n_{jm+p+1}+1,a_{jm+p+1}+1\right)$;
        \item If $a_{(j+1)m-1}=a_{(j+1)m}$ and $a_{jm+p+1}=a_{(j+1)m}+1$, assume that $a_{(j+1)m}=a_{(j+1)m-1}=\cdots=a_{(j+1)m-v}<a_{(j+1)m-v-1}$. For $jm+1\leq k\leq (j+1)m$, $(n_k',a_k')=(n_k,a_k)$ if $k\neq jm-v$ and
        \[
        \left(n_{(j+1)m-v}',a_{(j+1)m-v}'\right)=\left(n_{(j+1)m-v}+1,a_{(j+1)m-v}+1\right);
        \]
        \item If $a_{jm+p+1}=a_{(j+1)m-1}=a_{(j+1)m}+1$, assume that $a_{(j+1)m-1}=a_{(j+1)m-2}=\cdots=a_{(j+1)m-w}<a_{(j+1)m-w-1}$. Let $\left(n_{(j+1)m}',a_{(j+1)m}'\right)=\left(n_{(j+1)m},a_{(j+1)m}+1\right)$ and $\left(n_{k}',a_{k}'\right)=\left(n_{k}-1,a_{k}\right)$ for $(j+1)m-w \leq k \leq (j+1)m-1$.

    \end{enumerate}

    Cases (b), (c), (d) and (e) are adjustments in $\mathcal{L}_i$. Cases (f), (g), (h) are adjustments in $\mathcal{R}_{j+1}$. By the discussion before, Cases (b) and (f) are valid. Similarly we can prove that $S'\in\mathbb{E}_m(s) $ and $f(S)<f(S')$. { A contradiction}.
    
    
\end{enumerate}

\par \tb{Step 2.} 
Now we provide the proofs of Conclusions (1)(2). First, we prove Conclusion (1) by contradiction. Assume that
$
{a_{i}=a_{i+m}+2}
$ 
for ${1\leq i\leq (t-1)m}$ doesn't hold. Then by Conclusion (1)', there exists a unique ${1\leq l\leq t-1}$, such that ${A_l-A_{l+1}=2m+1}$. Thus there exists a unique $u$, such that $a_u-a_{u+m}=3$ and $a_i-a_{i+m}=2$ for $i\neq u$. Assume that $q\leq m$ is the largest positive integer such that $a_1=\cdots=a_q$. By Condition (2) in Definition \ref{def:4.2} and Conclusion (2) in Lemma \ref{row}, we obtain
\[
    u\equiv q \ (\text{mod $m$}).
\]
Therefore, $u=(l-1)m+q$. By Lemma \ref{row}, $a_{(i-1)m+1}-a_{im}\leq 1$ for $1\leq i\leq t$, so we have two cases: $a_1=a_m+1$ or $a_1=a_m$.

\par Firstly, consider the case ${a_1=a_m+1}$. For $0\leq w\leq t$, let $S_w\in\mathbb{D}_m(s)$ satisfy the following conditions. Here we use $(n_i^w,a_i^w)$ to denote $\left(n_i\left(S_w\right),a_i\left(S_w\right)\right)$.
\begin{enumerate}
    \item[(1)] $\left(n_i^w,a_i^w\right)=(n_i,a_i)$ 
    for $i\neq q,2q,\cdots,(t-1)m+q$; 
    \item[(2)] If $0\leq k\leq l-1$, then $(n_i^w,a_i^w)=(n_i-1,a_i-1)$ for $i=km+q,(k+1)m+q,\cdots,(l-1)m+q$;
    \item[(3)] If $l\leq k\leq t$, then $(n_i^w,a_i^w)=(n_i+1,a_i+1)$ for $i=lm+q,(l+1)m+q,\cdots,(k-1)m+q$.
\end{enumerate}Then $S=S_l$. It's easy to check that $S_0,\cd,S_t\in\mathbb{E}_m(s) $ and $|S_i|-|S_{i-1} |=1$ for $1\leq i\leq t$. Since\begin{eqnarray*}
    f(S_{n+1})-f(S_n) &=& \sum_{x\in S_{n+1}} x -\frac{| S_{n+1}|\left(| S_{n+1}|-1\right)}{2}-\sum_{x\in S_{n}} x + \frac{| S_{n}|(| S_{n}|-1)}{2} \\
    &=& \sum_{x\in S_{n+1}} x-\sum_{x\in S_n}x - \frac{| S_{n}|(| S_{n}|+1)}{2}+\frac{| S_{n}|(| S_{n}|-1)}{2} \\
    &=& \sum_{x\in S_{n+1}} x-\sum_{x\in S_n}x - | S_n|,
\end{eqnarray*}
we have
\begin{eqnarray*}
        && f(S_{n+2})-2f(S_{n+1})+f(S_n) \\ &=&\sum_{x\in S_{n+2}-S_{n+1}} x -\sum_{x\in S_{n+1}-S_n} x-(| S_{n+1}| - | S_n|)\\
        &=& ms-1-1\\
        &=& ms-2>0.
        \end{eqnarray*}
        Thus $f(S_{n+2})-f(S_{n+1})>f(S_{n+1})-f(S_{n})$, and $f(S)=f(S_l)<\max\{f(S_{l-1}),f(S_{l+1})\}$. A {contradiction}.
        
\par Secondly, consider the case ${a_1=a_m}$. For $0\leq w\leq t$, let $S_w\in\mathbb{D}_m(s)$ satisfy the following conditions. 
Here we still use $(n_i^w,a_i^w)$ to denote $\left(n_i\left(S_w\right),a_i\left(S_w\right)\right)$.
\begin{enumerate}
    \item[(1)] $(n_i^w,a_i^w)=(n_i,a_i)$ except when $i=q,2q,\cdots,(t-1)m+q$;
    \item[(2)] Assume that $0\leq k\leq l-1$. For $km+1\leq i\leq lm$, $(n_i^w,a_i^w)=(n_i,a_i-1)$ if $i\equiv q\mod m$ and $(n_i^w,a_i^w)=(n_i+1,a_i)$ if $i\not\equiv q\mod m$. Otherwise, $(n_i^w,a_i^w)=(n_i,a_i)$.
    \item[(3)] Assume that $l\leq k\leq t$. For $lm+1\leq i\leq km$, $(n_i^w,a_i^w)=(n_i,a_i+1)$ if $i\equiv q\mod m$ and $(n_i^w,a_i^w)=(n_i-1,a_i)$ if $i\not\equiv q\mod m$. Otherwise, $(n_i^w,a_i^w)=(n_i,a_i)$. 
\end{enumerate}

Then $S=S_l$. It's easy to check that $S_0,\cd,S_t\in\mathbb{E}_m(s) $ and $|S_i|-|S_{i-1} |=1$ for $1\leq i\leq t$. Similarly,
    \begin{eqnarray*}
        && f(S_{n+2})-2f(S_{n+1})+f(S_n) \\ &=& \sum_{x\in S_{n+2}} x - \sum_{x\in S_{n+1}} x - \left(\sum_{x\in S_{n+1}} x - \sum_{x\in S_{n}} x\right)-\left|S_{n+1}|+|S_n\right| \\
        &=& ms+1+2(m-1)-1=m(s+2)-2>0.
        \end{eqnarray*}
        This also contradicts the assumption of $S$.

\par Now we turn to the case where $A_{t-1}-A_t<2m$. 
{We prove Conclusion (2) by contradiction. First, assume that the claim
$
{a_{i}=a_{i+m}+2}
$ 
for ${1\leq i\leq (t-2)m}$ is not true.} Similarly, there exist unique ${1\leq l\leq t-1}$ and ${1\leq k\leq m}$, such that ${a_{(l-1)m+k}-a_{lm+k}=3}$ and $a_i-a_{i+m}=2$ for $i\neq (l-1)m+k$. We can make adjustments similar to the former case to the first $p$ columns of $S$ and the last $m-p$ columns of $S$ separately. Still, we can reach a {contradiction}. Thus $a_i-a_{i+m}=2$ for $1\leq i\leq (t-2)m$. 
\par Next, we prove that $a_{(t-2)m+1}=\cd=a_{(t-2)m+p}=3>a_{(t-2)m+p+1}$ by contradiction. We know that $a_{(t-2)m+p}\ge a_{(t-1)m+p}+2=3 $. {Assume that $a_{(t-2)m+p+1}\ge 3$, then $p<m-1$ since $a_{(t-1)m}=1$.} Let $S'=S\cup\{((t-1)m+p+1)s-t\}$, then $S'$ is still a generalized-$\beta$-set and $f(S)<f(S')$. A contradiction. Thus $a_{(t-2)m+p+1}\leq 2$. {Assume that $a_{(t-2)m+1}\ge 4$. Set $a_{(t-2)m+1}=a_{(t-2)m+2}=\cdots=a_{(t-2)m+u}>a_{(t-2)m+u+1}$ for some $1\leq u\leq p$ and $a_{(t-1)m}=a_{(t-1)m-1}=\cdots=a_{(t-1)m-v}<a_{(t-1)m-v-1}$ for some $0 \leq v \leq m-p-1$.} {Since $A_{t-1}-A_t < 2m$, we have $p < m-1$ and $v \ge 1$.} Let $S'\in\mathbb{D}_m(s)$ satisfy the following conditions:
\begin{enumerate}
    \item[(1)] $(n_i',a_i')=(n_i,a_i)$ if $i \equiv 1, 2, \ldots, u-1, u+1, \ldots, p\ (\text{mod}\ m)$;
    \item[(2)] $\left(n_{jm+u}',a_{jm+u}'\right)=\left(n_{jm+u},a_{jm+u}-1\right)$ for $0 \leq j \leq t - 2$;
    \item [(3)] Let 
    \[
    \left(n_{jm-v}',a_{jm-v}'\right)=\left(n_{jm-v}+1,a_{jm-v}+1\right)
    \]
    for $1 \leq j \leq t-1$ and $\left(n_{(j-1)m+i}',a_{(j-1)m+i}'\right) =\left(n_{(j-1)m+i},a_{(j-1)m+i}\right)$ if $1 \leq j \leq t-1$, $p+1 \leq i \leq m$ and $i \neq m-v$.
\end{enumerate}
Since $a_i-a_{i+m}=2$ for $1\leq i\leq (t-2)m$, it's easy to verify that $S'\in\mathbb{E}_m(s) $. If (4) doesn't happen, obviously $f(S)<f(S')$, a contradiction. Otherwise, $v=0$ and $a_{im}=a_{im-1}-1$ for $1 \leq i \leq t-1$. Recall that $D(i)$ denote $\sum_{x\in\mathcal{B}_i'}x-\sum_{x\in\mathcal{B}_i}x$. Then
\begin{align*}
    f(S') - f(S) &= \sum_{x \in S'} x - \sum_{x \in S} x \\
    &= \sum_{i=0}^{t-2} \left( D(im+u) + D ( (i+1)m )  \right) + \sum_{i=0}^{t-2} \sum_{j=p+1}^{m-1} D(im+j) \\
    & \ge (t-1)(m-u)s - (t-1) (m-p-1) s >0.
\end{align*}So $f(S')>f(S)$. A contradiction. Thus for $(t-2)m+1\leq i\leq (t-2)m+p$, we have $a_i-a_{i+m}=2$.
\end{proof}

\begin{definition}
    Let $\mathbb{F}_m(s)$ be the set of all $S\in\mathbb{E}_m(s)$ such that $S$ satisfies Conclusions (1) and (2) in Lemma \ref{Normal}.
\end{definition}

By Lemma \ref{Normal}, we obtain the following corollary.

\begin{corollary} \label{cor: 7.3}
We have $\mathbb{F}_m(s)\subseteq\mathbb{E}_m(s)\subseteq\mathbb{C}_m(s)\subseteq\mathbb{B}_m(s)$ and
\begin{equation*}
    {\arg}\max_{S'\in\mathbb{B}_m(s)}f(S')={\arg}\max_{S''\in\mathbb{C}_m(s)}f(S'')={\arg}\max_{S'''\in\mathbb{E}_m(s)}f(S''')={\arg}\max_{S''''\in\mathbb{F}_m(s)}f(S'''').
\end{equation*}
\end{corollary}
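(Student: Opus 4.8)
The plan is to reduce the corollary to two essentially disjoint pieces: the set inclusions, which are immediate, and the chain of $\arg\max$ identities, of which only the final link is genuinely new. For the inclusions, $\mathbb{F}_m(s)\subseteq\mathbb{E}_m(s)$ and $\mathbb{E}_m(s)\subseteq\mathbb{C}_m(s)$ hold directly by construction, since each of these sets is carved out of the previous one merely by imposing the extra conditions in its defining definition (Conclusions (1),(2) of Lemma \ref{Normal} and of Lemma \ref{row}, respectively). The remaining inclusion $\mathbb{C}_m(s)\subseteq\mathbb{B}_m(s)$ is exactly Part (2) of the lemma relating $\mathbb{C}_m(s)$ and $\mathbb{D}_m(s)$. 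So no work is needed for the inclusions.

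For the $\arg\max$ identities, I would first invoke the fact that $\arg\max_{\mathbb{B}_m(s)}f=\arg\max_{\mathbb{C}_m(s)}f=\arg\max_{\mathbb{E}_m(s)}f$ has already been established, namely in the corollary following Lemma \ref{row}. Thus everything reduces to proving the single new identity $\arg\max_{\mathbb{E}_m(s)}f=\arg\max_{\mathbb{F}_m(s)}f$. Here I would apply the general principle that for finite sets $\mathbb{F}\subseteq\mathbb{E}$, if $\arg\max_{\mathbb{E}}f\subseteq\mathbb{F}$, then $\arg\max_{\mathbb{E}}f=\arg\max_{\mathbb{F}}f$: indeed a maximizer over $\mathbb{E}$ lying in $\mathbb{F}$ witnesses that $\max_{\mathbb{F}}f=\max_{\mathbb{E}}f$, while $\mathbb{F}\subseteq\mathbb{E}$ prevents $\max_{\mathbb{F}}f$ from exceeding $\max_{\mathbb{E}}f$, whence both $\arg\max$ sets coincide with $\{S\in\mathbb{F}:f(S)=\max_{\mathbb{E}}f\}$.

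The one substantive input is the hypothesis $\arg\max_{\mathbb{E}_m(s)}f\subseteq\mathbb{F}_m(s)$, i.e.\ that every maximizer over $\mathbb{E}_m(s)$ already satisfies the $\mathbb{F}_m(s)$-defining conditions. This is precisely the content of Lemma \ref{Normal}: a generalized-$\beta$-set maximizing $f$ satisfies Conclusions (1) and (2) of that lemma. To apply Lemma \ref{Normal} I must check that a maximizer over $\mathbb{E}_m(s)$ is genuinely a maximizer over the full family $\mathbb{B}_m(s)$ of generalized-$\beta$-sets — this is exactly the equality $\arg\max_{\mathbb{B}_m(s)}f=\arg\max_{\mathbb{E}_m(s)}f$ quoted above — so that Lemma \ref{Normal} is legitimately applicable; since such a maximizer simultaneously lies in $\mathbb{E}_m(s)$, it belongs to $\mathbb{F}_m(s)$. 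Chaining the four equalities then gives the corollary.

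There is essentially no new obstacle in this step: all of the analytic difficulty has been absorbed into Lemma \ref{Normal} (and, before it, Lemma \ref{row} and Lemma \ref{initial}), and the corollary is a bookkeeping consequence. The only point demanding a little care is ensuring that the successive restrictions of the domain from $\mathbb{B}_m(s)$ down through $\mathbb{C}_m(s)$, $\mathbb{E}_m(s)$, to $\mathbb{F}_m(s)$ never discard a maximizer, so that Lemma \ref{Normal} may always be read as a statement about maximizers of the \emph{original} problem; this is guaranteed by invoking the previously proved $\arg\max$ equalities before passing to each smaller family.
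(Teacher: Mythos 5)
Your proposal is correct and matches the paper's (largely implicit) argument: the paper states this corollary as an immediate consequence of Lemma \ref{Normal}, relying on exactly the chain you describe — the inclusions hold by definition, the first two $\arg\max$ equalities were already established after Lemma \ref{row}, and Lemma \ref{Normal} shows every maximizer lies in $\mathbb{F}_m(s)$, so restricting the domain loses nothing. Your write-up simply makes explicit the bookkeeping the paper leaves to the reader.
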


Next, we give an example illustrating the adjustments in the proof of Lemma \ref{Normal}.

\begin{example}
Let $s=6$ and $m=3$. Figure \ref{Before} shows the generalized-$\beta$-set $S$ before the adjustments. Figure \ref{Step1} shows the generalized-$\beta$-set $S'$ after the first adjustment. Figure \ref{Step21} shows the same generalized-$\beta$-set $S'$ with Figure \ref{Step1}. Figure \ref{Step2} shows the generalized-$\beta$-set $S''$ after the second adjustment. The differences in the colors of some circles between Figure \ref{Step1} and \ref{Step21} are to highlight the changes of different adjustments.  Here $t=2, A_1=13, A_2=3$. Thus $A_{t-1}-A_t\ge 2m$. 

Notice that $S'=S\cup\{22\}\setminus\{5\}$ is a generalized-$\beta$-set. We color $S\setminus S'$ blue and $S'\setminus S$ red. The first adjustment is in rows $1$ and $2$ and involves (1)(b) and (1)(e) in the case $A_{t-1}-A_t\ge 2m$ of Step 1 of the proof in Pages 15 and 16. We can find that $(n_1,a_1)=(5,5),(n_4,a_4)=(1,3),(n_1',a_1')=(4,4),(n_4',a_4')=(2,4)$. Indeed, we have $(n_1',a_1')=(n_1-1,a_1-1)$ and $(n_4',a_4')=(n_4+1,a_4+1)$. 
Obviously $f(S')-f(S)=22-5>0.$ Here $A_1'=12,A_2'=4$. So $A'_{t-1}-A'_t\ge 2m$. 

Notice that $S''=S'\cup\{5,11,28\}\setminus\{1,7,13\}$ is a generalized-$\beta$-set. We color $S'\setminus S''$ blue and $S''\setminus S'$ red. The second adjustment is in rows $1$ and $2$ and involves (1)(a) and (1)(d) in the case $A'_{t-1}-A'_t\ge 2m$ of Step 1 of the proof in Lemma \ref{Normal}. We can find that $(n_1'',a_1'')=(n'_1+1,a'_1), (n_2'',a_2'')=(n'_2+1,a'_2), (n_3'',a_3'')=(n'_3,a'_3-1)$ and $(n_5'',a_5'')=(n'_5+1,a'_5+1)$. Obviously $f(S'')-f(S')=(5-1)+(11-7)+(28-13)>0$.

    \begin{figure}[h!]
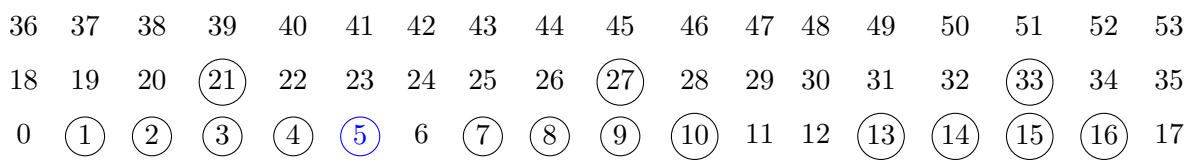

    \centering
    \[\small
    \begin{array}{cccccccccccccccccc}
    36 & 37 & 38 & 39 & 40 & 41 & 42 & 43 & 44 & 45 & 46 & 47 & 48 & 49 & 50 & 51 & 52 & 53 \\
    18 & 19 & 20 & \encircle{21} & 22 & 23 & 24 & 25 & 26 & \encircle{27} & 28 & 29 & 30 & 31 & 32 & \encircle{33} & 34 & 35\\
    0 & \encircle{1} & \encircle{2} & \encircle{3} & \encircle{4} & \enblue{5} & 6 & \encircle{7} & \encircle{8} & \encircle{9} & \encircle{10} & 11 & 12 & \encircle{13} & \encircle{14} & \encircle{15} & \encircle{16} & 17\\
    
    \end{array}
    \]
    \caption{Before adjustments.}
    \label{Before}
    \end{figure}

    \begin{figure}[h!]
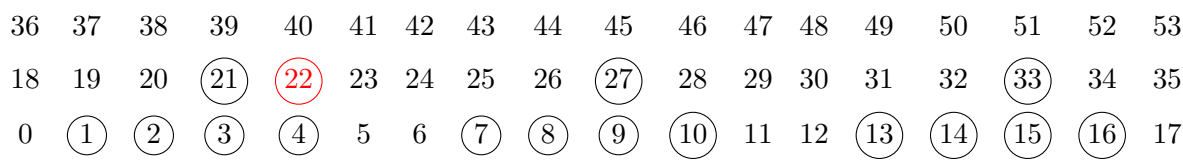

    \centering
    \[ \small
    \begin{array}{cccccccccccccccccc}
    36 & 37 & 38 & 39 & 40 & 41 & 42 & 43 & 44 & 45 & 46 & 47 & 48 & 49 & 50 & 51 & 52 & 53 \\
    18 & 19 & 20 & \encircle{21} & \enred{22} & 23 & 24 & 25 & 26 & \encircle{27} & 28 & 29 & 30 & 31 & 32 & \encircle{33} & 34 & 35\\
    0 & \encircle{1} & \encircle{2} & \encircle{3} & \encircle{4} & 5 & 6 & \encircle{7} & \encircle{8} & \encircle{9} & \encircle{10} & 11 & 12 & \encircle{13} & \encircle{14} & \encircle{15} & \encircle{16} & 17\\
    
    \end{array}
    \]
    \caption{After the first adjustment.}
    \label{Step1}
    \end{figure}

    \begin{figure}[h!]
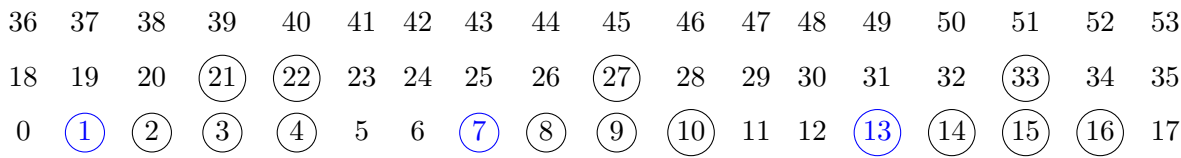

    \centering
    \[ \small
    \begin{array}{cccccccccccccccccc}
    36 & 37 & 38 & 39 & 40 & 41 & 42 & 43 & 44 & 45 & 46 & 47 & 48 & 49 & 50 & 51 & 52 & 53 \\
    18 & 19 & 20 & \encircle{21} & \encircle{22} & 23 & 24 & 25 & 26 & \encircle{27} & 28 & 29 & 30 & 31 & 32 & \encircle{33} & 34 & 35\\
    0 & \enblue{1} & \encircle{2} & \encircle{3} & \encircle{4} & 5 & 6 & \enblue{7} & \encircle{8} & \encircle{9} & \encircle{10} & 11 & 12 & \enblue{13} & \encircle{14} & \encircle{15} & \encircle{16} & 17\\
    
    \end{array}
    \]
    \caption{Before the second adjustment.}
    \label{Step21}
    \end{figure}

    \begin{figure}[h!]
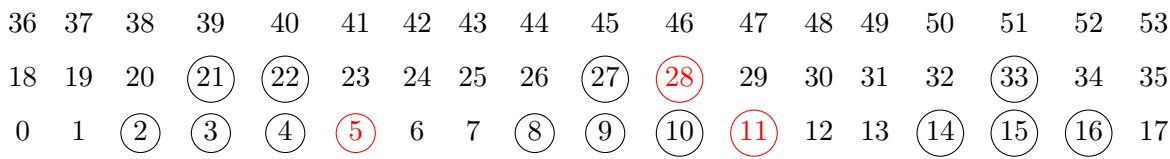

    \centering
    \[ \small
    \begin{array}{cccccccccccccccccc}
    36 & 37 & 38 & 39 & 40 & 41 & 42 & 43 & 44 & 45 & 46 & 47 & 48 & 49 & 50 & 51 & 52 & 53 \\
    18 & 19 & 20 & \encircle{21} & \encircle{22} & 23 & 24 & 25 & 26 & \encircle{27} & \enred{28} & 29 & 30 & 31 & 32 & \encircle{33} & 34 & 35\\
    0 & 1 & \encircle{2} & \encircle{3} & \encircle{4} & \enred{5} & 6 & 7 & \encircle{8} & \encircle{9} & \encircle{10} & \enred{11} & 12 & 13 & \encircle{14} & \encircle{15} & \encircle{16} & 17\\
    
    \end{array}
    \]
    \caption{After the second adjustment.}
    \label{Step2}
    \end{figure}

\end{example}

\section{Adjustments in columns}

\par In this section, first we prove two useful results Lemma \ref{Max1} and Lemma \ref{Max2}. These two lemmas deal with the cases $A_{t-1}-A_t\ge 2m$ and $A_{t-1}-A_t>2m$ respectively.


\begin{lemma}\label{Max1}
Let $S$ be a generalized-$\beta$-set that maximizes $f(S)$. Assume that $A_{t-1}-A_t\ge 2m$, then
\begin{enumerate}
\item[(1)] $a_1=\cdots=a_m$ or $a_1=\cdots=a_{m-1}=a_{m}+1$;
\item[(2)] $a_{(t-1)m+1}\in\{1,2\}$.
\end{enumerate}
\end{lemma}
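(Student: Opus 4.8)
The starting point is Corollary~\ref{cor: 7.3}: since the maximizers of $f$ over $\mathbb{B}_m(s)$ coincide with those over $\mathbb{F}_m(s)$, I may assume $S\in\mathbb{F}_m(s)$. Thus each $\mathcal{B}_i$ is a run of consecutive integers, each $n_i$ is pinned to the $a_i$ through the equalities of Lemma~\ref{initial}, the row bound $a_{(i-1)m+1}-a_{im}\le 1$ (Conclusion~(1) of Lemma~\ref{row}) holds, and the column identity $a_i=a_{i+m}+2$ holds for $1\le i\le (t-1)m$. I would prove Conclusion~(2) first and then feed it into Conclusion~(1).

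For Conclusion~(2), the bound $a_{(t-1)m+1}\ge 1$ is immediate: block $t-1$ is nonempty by the definition of $t$, and Condition~(2) of Definition~\ref{def:4.2} makes $a_{(t-1)m+1}$ the largest entry of that block. For the upper bound I argue by contradiction, assuming $a_{(t-1)m+1}\ge 3$. Because the pyramid $P_1$ loses exactly two cells per row, a width of at least $3$ in its $(t-1)$-st row leaves a free cell in the next row; adding the single bead $y$ sitting there produces $S'=S\cup\{y\}$. One checks the six conditions of Definition~\ref{def:4.2} for $S'$; the only delicate one is Condition~(6), which holds because Conclusion~(1) of Lemma~\ref{row} gives $a_{tm}\ge a_{(t-1)m+1}-1\ge 2$ while the new bottom row has $a_{tm+1}=1$. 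Finally $y\ge tms$ whereas $|S|=\sum_{i=1}^{tm}a_i\le tm(s-1)<tms$, so by $f(S\cup\{y\})-f(S)=y-|S|>0$ we get $f(S')>f(S)$, contradicting maximality. Hence $a_{(t-1)m+1}\le 2$.

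For Conclusion~(1) I write the first row as $a_1=\dots=a_q=A>a_{q+1}=\dots=a_m=A-1$ (legitimate since the row is nonincreasing of spread at most $1$); the two permitted shapes are exactly $q=m$ and $q=m-1$, so it suffices to exclude $1\le q\le m-2$ (which is vacuous for $m=2$). Assuming such a $q$ occurs, I introduce the family $S_w$, $0\le w\le W$, whose first row is $w$ copies of $A$ followed by $m-w$ copies of $A-1$, extended downward by $a_{im+k}=a_k-2i$; Conclusion~(2) guarantees $S=S_q$, and each $S_w\in\mathbb{F}_m(s)$. The asymmetric cap coming from $\mathcal{L}_m(s)$, namely $a_k\le s-1$ for $k<m$ but $a_m\le s-2$, yields $W=m$ if $A\le s-2$ and $W=m-1$ if $A=s-1$; either way $1\le q<W$, so $S_q$ is strictly interior. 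Passing from $S_w$ to $S_{w+1}$ raises one column from $A-1$ to $A$, adding $\Delta=\lceil A/2\rceil$ beads, and the beads inserted at step $w+1$ are exactly those inserted at step $w$ shifted up by $s$; with $|S_{w+1}|-|S_w|=\Delta$ constant the second difference telescopes to
\[
f(S_{w+2})-2f(S_{w+1})+f(S_w)=\Delta s-\Delta^{2}=\Delta(s-\Delta)>0,
\]
since $\Delta=\lceil A/2\rceil\le\lceil (s-1)/2\rceil<s$. Thus $w\mapsto f(S_w)$ is strictly convex, its maximum over $\{0,\dots,W\}$ is attained at $w=0$ or $w=W$, and the interior value $f(S_q)$ is strictly smaller; as every $S_w$ is a genuine generalized-$\beta$-set, this contradicts the maximality of $S$. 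Therefore $q\in\{m-1,m\}$, which is Conclusion~(1).

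The step I expect to be the main obstacle is the bookkeeping underlying the convexity computation in Conclusion~(1): one must verify that each $S_w$ truly satisfies all of Definition~\ref{def:4.2} once the $n_i$ are forced by Lemma~\ref{initial}, and—more importantly—that the beads inserted at consecutive steps are honest $+s$ translates of one another, so that $\Delta$ is constant and the telescoping identity is exact. The two places where this could fail are the final step that raises column $m$ (which, being the minimal column, carries $n_m=s-2$ rather than $s-1$) and the possible drop of the block count at $w=0$; both must be checked directly. The corresponding verification for Conclusion~(2), that the single appended bead lies in $\mathcal{L}_m(s)$ and preserves all six conditions, is comparatively routine.
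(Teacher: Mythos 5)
Your proposal is correct and follows essentially the same route as the paper: Conclusion (2) is obtained by appending a single bead in a new row of $P_1$ (the paper uses $(tm+1)s-t-1$ or $(tm+1)s-t-2$) and checking that the result remains a generalized-$\beta$-set with larger $f$, and Conclusion (1) is obtained by the same discrete convexity argument over the interpolating family $S_w$, whose second difference the paper computes as $t(s-t)>0$ — identical to your $\Delta(s-\Delta)$ since Conclusion (2) forces $\lceil A/2\rceil=t$. The bookkeeping issues you flag are real but harmless, exactly as you suspect: since $q\le m-2$, only the single second difference at $w=q-1,q,q+1$ with $q+1\le m-1$ is needed, so the column-$m$ step never arises (the paper simply restricts the family to $0\le n\le m-1$).
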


\begin{proof}
\par By Lemma \ref{Normal}, we obtain that $S\in\mathbb{F}_m(s)$ and $a_i-a_{i+m}=2$ for $1\leq i\leq (t-1)m$. {We will prove Conclusion (1) by contradiction. Assume that Conclusion (1) is not true}. Then there exists $1\leq k\leq m-2$ and $a\ge 0$, such that $a_1=\cdots=a_k=a+1$ and $a_{k+1}=\cdots=a_m=a$.

    


\par For $0\leq n\leq m-1$, let $S_n\in\mathbb{C}_m(s)$ denote the $t$-row generalized-$\beta$-set such that $a_1=\cdots=a_n=a+1$, $a_{n+1}=\cdots=a_m=a$ and $a_i-a_{i+m}=2$ for all $i$. Then $S=S_k$ and we have

\begin{eqnarray*}
f(S_{n+1})-f(S_n) &=& \sum_{x\in S_{n+1}} x -\frac{|S_{n+1}|(|S_{n+1}|-1)}{2}-\sum_{x\in S_{n}} x + \frac{|S_{n}|(|S_{n}|-1)}{2} \\
&=& \sum_{x\in S_{n+1}} x - \sum_{x\in S_{n}} x-\sum_{i=|S_n|}^{|S_{n+1}|-1} i.
\end{eqnarray*}Therefore,
        
\begin{eqnarray*}
&& f(S_{n+2})-2f(S_{n+1})+f(S_n) \\ &=& \left(\sum_{x\in S_{n+2}} x - \sum_{x\in S_{n+1}} x\right) - \left(\sum_{x\in S_{n+1}} x - \sum_{x\in S_{n}} x\right)-\left(\sum_{i=|S_{n+1}|}^{|S_{n+2}|-1} i-\sum_{i=|S_n|}^{|S_{n+1}|-1} i\right) \\
&=& ts-t^2=t(s-t)>0.
\end{eqnarray*}
Thus we obtain $f(S)=f(S_k)<\max\{f(S_{k-1}),f(S_{k+1})\}$, a {contradiction}.

\par Next, {we prove Conclusion (2) by contradiction. Assume that Conclusion (2) is not true}. Then  $a_{(t-1)m+1}\ge 3$, thus $a_{tm}\ge a_{(t-1)m+1}-1\ge 2$. Let $b_i=|\mathcal {L}_m(s)\cap [(i-1)s, is-1]|$ for $i\ge 1$. Then $b_{tm+1}=b_{(t-1)m+1}-2\ge a_{(t-1)m+1}-2\ge 1$. Let $R=S\cup\{(tm+1)s-t-1\}$ and $T=S\cup\{(tm+1)s-t-2\}$. Then $R$ or $T$ is a generalized-$\beta$-set while $f(S)<f(R)$ and $f(S)<f(T)$. A {contradiction}.
\end{proof}

\begin{example}
We give an example illustrating the proof of Lemma \ref{Max1}. Let $s=5$ and $m=3$, consider $S_0$ in Figure \ref{fig:s0}, $S_1$ in Figure \ref{fig:s1} and $S_2$ in Figure \ref{fig:s2}. We color $S_0\setminus S_1$ blue and $S_2\setminus S_1$ red. Indeed $(a_1(S_0),a_2(S_0),a_3(S_0))=(3,3,3), (a_1(S_1),a_2(S_1),a_3(S_1))=(4,3,3), (a_1(S_2),a_2(S_2),a_3(S_2))=(4,4,3)$. It's easy to verify that $S_0, S_1, S_2 \in \mathbb{F}_m(s)$ with $f(S_0)=63$, $f(S_1)=60$ and $f(S_2)=63$. Therefore, $f(S_1) < \max\{f(S_0), f(S_2)\}$.
\begin{figure}[h!]
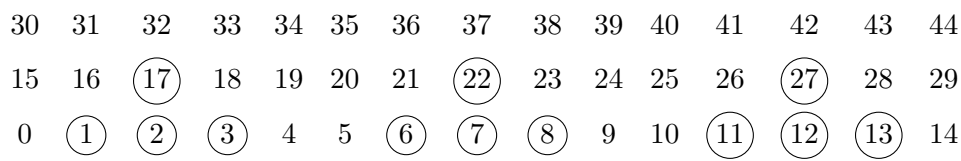

    \centering
    \[ \small
\begin{array}{ccccccccccccccc}
30 & 31 & 32 & 33 & 34 & 35 & 36 & 37 & 38 & 39 & 40 & 41 & 42 & 43 & 44\\
15 & 16 & \encircle{17} & 18 & 19 & 20 & 21 & \encircle{22} & 23 & 24 & 25 & 26 & \encircle{27} & 28 & 29\\
0 & \encircle{1} & \encircle{2} & \encircle{3} & 4 & 5 & \encircle{6} & \encircle{7} & \encircle{8} & 9 & 10 & \encircle{11} & \encircle{12} & \encircle{13} & 14\\
\end{array} 
\]
    \caption{$S_0$ with $(a_1(S_0),a_2(S_0),a_3(S_0))=(3,3,3)$.}
    \label{fig:s0}
\end{figure}

\begin{figure}[h!]
    \centering
    \[ \small
\begin{array}{ccccccccccccccc}
30 & 31 & 32 & 33 & 34 & 35 & 36 & 37 & 38 & 39 & 40 & 41 & 42 & 43 & 44\\
15 & 16 & \encircle{17} & \enblue{18} & 19 & 20 & 21 & \encircle{22} & 23 & 24 & 25 & 26 & \encircle{27} & 28 & 29\\
0 & \encircle{1} & \encircle{2} & \encircle{3} & \enblue{4} & 5 & \encircle{6} & \encircle{7} & \encircle{8} & 9 & 10 & \encircle{11} & \encircle{12} & \encircle{13} & 14\\
\end{array} 
\]
    \caption{$S_1$ with $(a_1(S_1),a_2(S_1),a_3(S_1))=(4,3,3)$.}
    \label{fig:s1}
\end{figure}

\begin{figure}[h!]
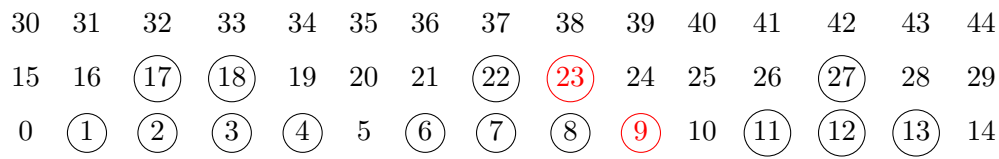

    \centering
    \[  \small
\begin{array}{ccccccccccccccc}
30 & 31 & 32 & 33 & 34 & 35 & 36 & 37 & 38 & 39 & 40 & 41 & 42 & 43 & 44\\
15 & 16 & \encircle{17} & \encircle{18} & 19 & 20 & 21 & \encircle{22} & \enred{23} & 24 & 25 & 26 & \encircle{27} & 28 & 29\\
0 & \encircle{1} & \encircle{2} & \encircle{3} & \encircle{4} & 5 & \encircle{6} & \encircle{7} & \encircle{8} & \enred{9} & 10 & \encircle{11} & \encircle{12} & \encircle{13} & 14\\
\end{array} 
\]
    \caption{$S_2$ with $(a_1(S_2),a_2(S_2),a_3(S_2))=(4,4,3)$.}
    \label{fig:s2}
\end{figure}

\end{example}

\begin{lemma}\label{Max2}
Let $S$ be a generalized-$\beta$-set that maximizes $f(S)$. Assume that $A_{t-1}-A_t<2m$, then we have
\begin{enumerate}
    \item[(1)] $s$ is even and $t=\frac{s}{2}$;
    \item[(2)] $a_{(t-1)m}=\cdots=a_{tm-1}=1$ and $a_{tm}=0$.
\end{enumerate}
\end{lemma}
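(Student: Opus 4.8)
The plan is to pin down the shape of $S$ using Lemma \ref{Normal} and then eliminate every configuration but the asserted one by the improving-move technique already used in Lemmas \ref{Max1} and \ref{Normal}. Throughout I view the blocks $\mathcal B_1,\dots,\mathcal B_{tm}$ as the $m$ columns of an $ms$-abacus and let $b_i=|\mathcal L_m(s)\cap[(i-1)s,is-1]|$ denote the pyramid capacity of block $i$.

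First I would record the structure. Since $A_{t-1}-A_t<2m$, Proposition \ref{<2m} produces the integer $p$, and Lemma \ref{Normal}(2) gives $a_i=a_{i+m}+2$ for $1\le i\le (t-2)m+p$ together with $a_{(t-2)m+p}=3>a_{(t-2)m+p+1}$. Hence columns $1,\dots,p$ form a height-$t$ pyramid with $a_{(i-1)m+k}=2(t-i)+1$, so the entry in block-row $t$ is $1$ and the entry in block-row $1$ is $2t-1$, while columns $p+1,\dots,m$ reach only height $t-1$, with bottom entries $c_k:=a_{(t-2)m+k}\in\{1,2\}$ and $c_m=1$. Because $S\subseteq\mathcal L_m(s)$, the base entry $a_1=2t-1$ cannot exceed the width $b_1=s-1$ of the base block of $P_1$ (Theorem \ref{L_m(s)}), so $2t\le s$.

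For Conclusion (1) it then suffices to rule out $2t\le s-1$, since $2t=s$ already forces $s$ even and $t=s/2$. If $2t\le s-2$ the pyramids still have a free base row, and I would enlarge the whole configuration to height $t+1$ (base entry $2t+1\le s-1$); a discrete second-difference computation exactly as in Lemma \ref{Max1} shows $f$ is strictly convex along this height family, so enlarging strictly increases $f$, contradicting maximality. In the remaining boundary case $2t=s-1$, where one cannot grow taller, I would instead pass to the larger configuration that fills the pyramids $P_1,\dots,P_{m-1}$ out to base width $s-1$; this shifted set is again contained in $\mathcal L_m(s)$, and a direct computation of $\sum_x x$ and $|S|$ shows $f$ strictly increases. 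Either way maximality is contradicted, hence $2t=s$. For Conclusion (2) I would show $p=m-1$. Suppose $p\le m-2$, so some short column $p+1\ (\le m-1)$ exists; by $2t\le s$ its apex block $(t-1)m+p+1$ still has room in $\mathcal L_m(s)$. I would promote this column to the full height-$t$ pyramid pattern $2t-1,2t-3,\dots,1$. Condition (3) of Definition \ref{def:4.2} is preserved because the promoted column then carries the entry $3$ in block-row $t-1$ and $1$ in block-row $t$, while Lemma \ref{One Line} controls the other columns, so the new set lies in $\mathbb B_m(s)$; since all inserted beads sit high in the abacus, $f$ strictly increases, a contradiction. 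Therefore $p=m-1$, i.e. $a_{(t-1)m}=\dots=a_{tm-1}=1$ and $a_{tm}=0$.

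The main obstacle in each step is not the inequality $f(S)<f(S')$ itself but verifying that every adjusted set remains a generalized-$\beta$-set, the delicate point being Condition (3) of Definition \ref{def:4.2}; this is exactly what Lemma \ref{One Line} was isolated to control. The genuinely subtle case is the parity boundary $2t=s-1$ of Conclusion (1): there the tall columns sit one step below the apex with no room to grow taller, so the only available improving move is the global filling to base width $s-1$, which crosses out of the $A_{t-1}-A_t<2m$ regime, and one must check carefully that it produces a legitimate generalized-$\beta$-set with strictly larger $f$.
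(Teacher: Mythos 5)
Your structural setup (the pyramid shape of the columns forced by Lemma \ref{Normal}, the bound $2t\le s$ from $a_1=2t-1\le s-1$, and the promotion of a short column to a full height-$t$ pyramid to force $p=m-1$) is sound and matches the paper's proof of Conclusion (2), which carries out the same promotion in two stages (first ruling out bottom entries equal to $2$, then ruling out $p<m-1$). The genuine gap is in your argument for Conclusion (1). First, convexity is the wrong tool: second differences show that an \emph{interior} member of a one-parameter family cannot be maximal (as in Lemma \ref{Max1}), but here you need the \emph{first} difference $f(S_{t+1})-f(S_t)>0$ for a mixed-height family, which you assert but never compute; this estimate is not free, since the family depends on $p$ and on the bottom entries $c_k\in\{1,2\}$ of the short columns. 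Second, and more seriously, the boundary case $2t=s-1$ is exactly where the work lies, and you dispose of it with an unverified ``direct computation'' against the filled configuration. Since $f$ is not monotone under inclusion (adding beads increases the penalty term $|S|(|S|-1)/2$), the containment $S\subseteq\mathcal{L}_m(s)$ gives nothing by itself, and the comparison $f(\mathcal{L}_m(s))>f(S)$ over your whole structured family is precisely the kind of computation that occupies several displays in Theorem \ref{maximal}. Moreover, the move as you describe it --- filling only $P_1,\dots,P_{m-1}$ to base width $s-1$ while leaving column $m$ untouched --- produces a set with $a'_{(t-1)m+1}=2$ but $a'_{(t-1)m}=1$, violating Condition (6) of Definition \ref{def:4.2}; you would have to fill $P_m$ as well.

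The paper sidesteps all of this by proving Conclusion (2) first and then making a single local adjustment: with $p=m-1$ in hand, it adds one bead to column $m$ in each of block-rows $1,\dots,t-1$ and obtains the exact identity $f(S')-f(S)=\tfrac{mt(t-1)(s-2t)}{2}\ge 0$, with equality only when $s=2t$. Maximality then forces $s=2t$ in one stroke, with no case split on the parity of $s$ or on how far $2t$ sits below $s$. You should replace your two-case argument for Conclusion (1) by this adjustment (or supply an equally explicit first-difference computation for each of your cases); as written, the case $2t=s-1$ is not proved.
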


\begin{proof}
By Corollary \ref{cor: 7.3}, we obtain that $S\in\mathbb{F}_m(s)$. Now we prove Conclusion (2). Since $A_{t-1}-A_t<2m$, by Proposition \ref{<2m} we set $a_{(t-1)m}=\cdots=a_{(t-1)m+p}=1>a_{(t-1)m+p+1}$, where $p\leq m-1$.
First, we obtain that ${a_{(t-2)m+1}=\cdots=a_{(t-2)m+p}=3>a_{(t-2)m+p+1}}$ by Lemma \ref{Normal}. Now we claim that ${a_{(t-2)m+p+1}=1}$. 

Otherwise set ${a_{(t-2)m+p+1}=\cdots=a_{(t-2)m+p+q}=2}$, ${a_{(t-2)m+p+q+1}=\cdots=a_{(t-1)m}=1}$. Then $s \ge 2t$ and $q \leq m-p-1$. Let $S'\in\mathbb{E}_m(s)$ be a $t$-row set satisfying the following conditions: 
\begin{enumerate}
\item[(1)] $(n_i',a_i')=(n_i,a_i)$ when $i\not\equiv p+1,p+2,\cdots,p+q\mod{m}$;
\item[(2)] $(n_i',a_i')=(n_i,a_i+1)$ when $i\equiv p+1,p+2,\cdots,p+q\mod{m}$ and $i\leq (t-1)m$;
\item[(3)] $\mathcal{B}_{(t-1)m+p+k}'=\{((t-1)m+p+k)s-t\}$ for $1\leq k\leq q$.
\end{enumerate}
Then obviously $S'\in\mathbb{F}_m(s)$. Set the first $k+1$ elements of $\mathcal{B}_{p+1}'$,$\cdots$,$\mathcal{B}_{p+q}'$ as $l_1,\cdots,l_{q(k+1)}$. Therefore,
\[
|S|=m(t-1)^2+(2p+q)(t-1)+p
\]
and
\begin{eqnarray*}
f(S')-f(S) &=& qms\frac{t(t-1)}{2} + \sum_{i=1}^{qt} l_i-qt|S|-\sum_{i=1}^{qt} (i-1) \\
&=& qt\left(\frac{(t-1)ms}{2}-m(t-1)^2-(2p+q)(t-1)-p\right) + \sum_{i=1}^{qt} (l_i-i+1) \\
& \ge & qt(mt(t-1)-m(t-1)^2-(m+p-1)(t-1)-p)+qt(ps+1) \\ 
&=& q(p+1)t^2>0.
\end{eqnarray*}
A {contradiction}. Thus we can assume that $a_{(t-2)m+j}\in\{1,3\}$ for $1\leq j\leq m$.



\par {Next, we prove ${p=m-1}$ by contradiction}. {Otherwise, we assume that ${p<m-1}$}. Since $a_{(t-1)m+1}=\cdots=a_{(t-1)m+p}=1$ for $1\leq p<m-1$, $s\ge 2t$. Let $S'\in\mathbb{E}_m(s)$ be a $t$-row set satisfying the following conditions: for $1\leq i\leq tm$,
\begin{enumerate}
\item[(1)] $(n_i',a_i')=(n_i,a_i)$ when $i\not\equiv p+1,p+2,\cdots,m-1\mod{m}$;
\item[(2)] $(n_i',a_i')=(n_i+1,a_i+2)$ when $i\equiv p+1,p+2,\cdots,m-1\mod{m}$ and $i\leq (t-1)m$;
\item[(3)] $\mathcal{B}_{(t-1)m+k}'=\{((t-1)m+k)s-t\}$ for $p+1\leq k\leq m-1$.
\end{enumerate}
Obviously $S'\in\mathbb{F}_m(s)$. Set the elements of $\mathcal{B}_{p+1}', \mathcal{B}_{p+2}',\cd,\mathcal{B}_{m-1}'$ as $l_1,\cdots,l_{(m-p-1)(2t-1)}$. 
Therefore, $|S|=m(t-1)^2+2pt-p$, and 
\begin{eqnarray*} 
&  & f\left(S'\right)-f(S) \\&=& (m-p-1)(t-1)^2ms+\sum_{i=1}^{(m-p-1)(2t-1)} l_i -(m-p-1)(2t-1)|S|\\&~~-&\sum_{i=1}^{(m-p-1)(2t-1)} (i-1) \\
&=& (m-p-1)((t-1)^2ms-(2t-1)(m(t-1)^2+2pt-p))\\&~~+&\sum_{i=1}^{(m-p-1)(2t-1)} (l_i-i+1) \\
& \ge & (m-p-1)(2mt(t-1)^2-(2t-1)(m(t-1)^2+2pt-p)) \\&~~+&(m-p-1)(2t-1)(ps+1) \\
& \ge & (m-p-1)(m(t-1)^2-p(2t-1)^2+(2t-1)(2pt+1)) \\
&=& (m-p-1)(m(t-1)^2+(p+1)(2t-1))>0. 
\end{eqnarray*}
A {contradiction}. Thus Conclusion (2) is proved.



\par {
Now we prove Conclusion (1)}. By Lemma \ref{Normal}, $a_{(t-2)m+1}=3$. Let $S'\in\mathbb{E}_m(s)$ be a $t$-row set satisfying the following conditions: for $1\leq i\leq tm$,
\begin{enumerate}
\item[(1)] $(n_i',a_i')=(n_i,a_i+1)$ when $1\leq i\leq (t-1)m$ and $m|i$;
\item[(2)]  $(n_i',a_i')=(n_i,a_i)$ otherwise.
\end{enumerate}
Obviously $S'\in\mathbb{F}_m(s)$.
Since $|S|=m(t-1)^2+2(m-1)(t-1)+(m-1)$, we have
\begin{eqnarray*}
&~&f(S')-f(S) = \frac{(t-1)(t-2)}{2}ms+\sum_{i=1}^{t-1} (ms-2t+i)-(t-1)|S|-\sum_{i=1}^{t-1} (i-1) \\
&=& (t-1)\left(\frac{t-2}{2}ms-m(t-1)^2-2(m-1)(t-1)-(m-1)+ms-2t+1\right) \\
&=&\frac{mt(t-1)(s-2t)}{2}\ge 0.
\end{eqnarray*}
The equality may only be achieved for $s=2t$. Then $s$ is even and $t=\frac{s}{2}$. {Thus Conclusion (1) is proved.}
\end{proof}

There are three kinds of adjustments in Lemma \ref{Max2}. We give an example to help visualize those adjustments.

\begin{example} \label{ex:formax2}

Let $s=6$ and $m=3$. It's easy to verify that $S_{11},S_{12},S_{21},S_{22},S_{31},S_{32}\in\mathbb{F}_m(s)$ in Figure \ref{fig:max2-1-1} to \ref{fig:max2-3-2}. 

Figure \ref{fig:max2-1-1} and Figure \ref{fig:max2-1-2} display the first kind of adjustment. Notice that $S_{12}=S_{11}\cup\{7,26,45\}$. We color $S_{12} \setminus S_{11}$ red. Indeed $p=q=1$, $(n_2(S_{12}), a_2(S_{12}))=(n_2(S_{11}), a_2(S_{11})+1), (n_5(S_{12}), a_5(S_{12}))=(n_5(S_{11}), a_5(S_{11})+1)$ and $\mathcal{B}_8(S_{32})=\{45\}$. We have $f(S_{11})=117<135=f(S_{12})$.

Figure \ref{fig:max2-2-1} and Figure \ref{fig:max2-2-2} display the second kind of adjustment. Notice that $S_{22}=S_{21}\cup\{7,11,26,28,45\}$. We color $S_{22} \setminus S_{21}$ red. Indeed $p=1$, $(n_2(S_{22}), a_2(S_{22}))=(n_2(S_{21})+1, a_2(S_{21})+2), (n_5(S_{22}), a_5(S_{22}))=(n_5(S_{21})+1, a_5(S_{21})+2)$ and $\mathcal{B}_8(S_{22})=\{45\}$. We have $f(S_{21})=113<135=f(S_{22})$.

Figure \ref{fig:max2-3-1} and Figure \ref{fig:max2-3-2} display the third kind of adjustment. Notice that $S_{32}=S_{31}\cup\{15\}$. We color $S_{32} \setminus S_{31}$ red. Indeed $t=2$ and $(n_3(S_{32}), a_3(S_{32}))=(n_3(S_{31}), a_3(S_{31})+1)$. We have $f(S_{31})=72<78=f(S_{32})$.

\begin{figure}[h!]
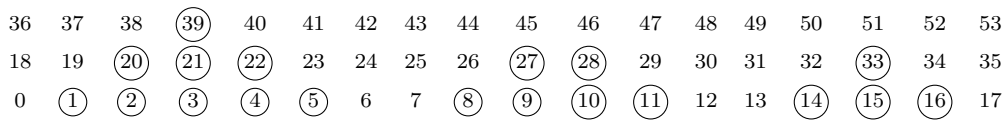

    \centering
    \[ \scriptsize
    \begin{array}{cccccccccccccccccc}
    36 & 37 & 38 & \encircle{39} & 40 & 41 & 42 & 43 & 44 & 45 & 46 & 47 & 48 & 49 & 50 & 51 & 52 & 53 \\
    18 & 19 & \encircle{20} & \encircle{21} & \encircle{22} & 23 & 24 & 25 & 26 & \encircle{27} & \encircle{28} & 29 & 30 & 31 & 32 & \encircle{33} & 34 & 35\\
    0 & \encircle{1} & \encircle{2} & \encircle{3} & \encircle{4} & \encircle{5} & 6 & 7 & \encircle{8} & \encircle{9} & \encircle{10} & \encircle{11} & 12 & 13 & \encircle{14} & \encircle{15} & \encircle{16} & 17\\
    
    \end{array}
    \]
    \caption{$S_{11}$: before the first kind of adjustment.}
    \label{fig:max2-1-1}
\end{figure}

\begin{figure}[h!]
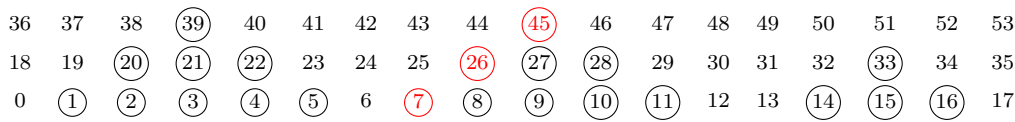

    \centering
    \[ \scriptsize
    \begin{array}{cccccccccccccccccc}
    36 & 37 & 38 & \encircle{39} & 40 & 41 & 42 & 43 & 44 & \enred{45} & 46 & 47 & 48 & 49 & 50 & 51 & 52 & 53 \\
    18 & 19 & \encircle{20} & \encircle{21} & \encircle{22} & 23 & 24 & 25 & \enred{26} & \encircle{27} & \encircle{28} & 29 & 30 & 31 & 32 & \encircle{33} & 34 & 35\\
    0 & \encircle{1} & \encircle{2} & \encircle{3} & \encircle{4} & \encircle{5} & 6 & \enred{7} & \encircle{8} & \encircle{9} & \encircle{10} & \encircle{11} & 12 & 13 & \encircle{14} & \encircle{15} & \encircle{16} & 17\\
    
    \end{array}
    \]
    \caption{$S_{12}$: after the first kind of adjustment.}
    \label{fig:max2-1-2}
\end{figure}

\begin{figure}[h!]
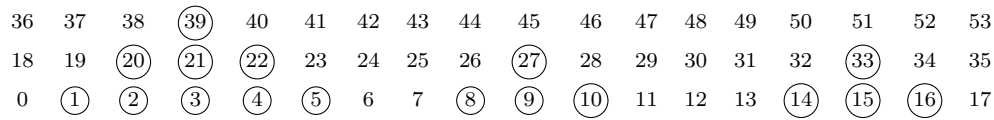

    \centering
    \[ \scriptsize
    \begin{array}{cccccccccccccccccc}
    36 & 37 & 38 & \encircle{39} & 40 & 41 & 42 & 43 & 44 & 45 & 46 & 47 & 48 & 49 & 50 & 51 & 52 & 53 \\
    18 & 19 & \encircle{20} & \encircle{21} & \encircle{22} & 23 & 24 & 25 & 26 & \encircle{27} & 28 & 29 & 30 & 31 & 32 & \encircle{33} & 34 & 35\\
    0 & \encircle{1} & \encircle{2} & \encircle{3} & \encircle{4} & \encircle{5} & 6 & 7 & \encircle{8} & \encircle{9} & \encircle{10} & 11 & 12 & 13 & \encircle{14} & \encircle{15} & \encircle{16} & 17\\
    
    \end{array}
    \]
    \caption{$S_{21}$: before the second kind of adjustment.}
    \label{fig:max2-2-1}
\end{figure}

\begin{figure}[h!]
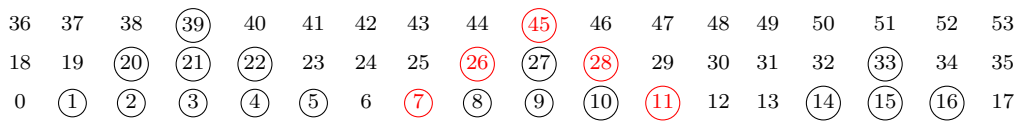

    \centering
    \[ \scriptsize
    \begin{array}{cccccccccccccccccc}
    36 & 37 & 38 & \encircle{39} & 40 & 41 & 42 & 43 & 44 & \enred{45} & 46 & 47 & 48 & 49 & 50 & 51 & 52 & 53 \\
    18 & 19 & \encircle{20} & \encircle{21} & \encircle{22} & 23 & 24 & 25 & \enred{26} & \encircle{27} & \enred{28} & 29 & 30 & 31 & 32 & \encircle{33} & 34 & 35\\
    0 & \encircle{1} & \encircle{2} & \encircle{3} & \encircle{4} & \encircle{5} & 6 & \enred{7} & \encircle{8} & \encircle{9} & \encircle{10} & \enred{11} & 12 & 13 & \encircle{14} & \encircle{15} & \encircle{16} & 17\\
    
    \end{array}
    \]
    \caption{$S_{22}$: after the second kind of adjustment.}
    \label{fig:max2-2-2}
\end{figure}

\begin{figure}[h!]
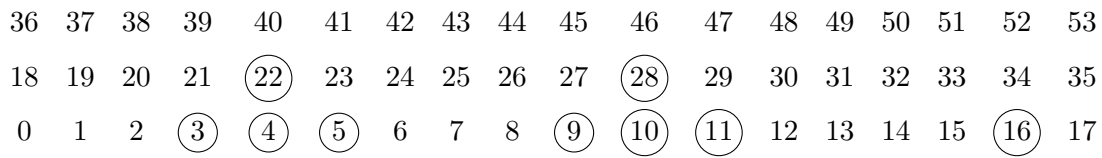

    \centering
    \[  \small
    \begin{array}{cccccccccccccccccc}
    36 & 37 & 38 & 39 & 40 & 41 & 42 & 43 & 44 & 45 & 46 & 47 & 48 & 49 & 50 & 51 & 52 & 53 \\
    18 & 19 & 20 & 21 & \encircle{22} & 23 & 24 & 25 & 26 & 27 & \encircle{28} & 29 & 30 & 31 & 32 & 33 & 34 & 35\\
    0 & 1 & 2 & \encircle{3} & \encircle{4} & \encircle{5} & 6 & 7 & 8 & \encircle{9} & \encircle{10} & \encircle{11} & 12 & 13 & 14 & 15 & \encircle{16} & 17\\
    
    \end{array}
    \]
    \caption{$S_{31}$: before the third kind of adjustment.}
    \label{fig:max2-3-1}
\end{figure}

\begin{figure}[h!]
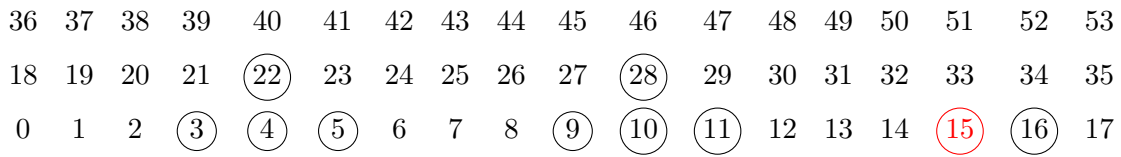

    \centering
    \[  \small
    \begin{array}{cccccccccccccccccc}
    36 & 37 & 38 & 39 & 40 & 41 & 42 & 43 & 44 & 45 & 46 & 47 & 48 & 49 & 50 & 51 & 52 & 53 \\
    18 & 19 & 20 & 21 & \encircle{22} & 23 & 24 & 25 & 26 & 27 & \encircle{28} & 29 & 30 & 31 & 32 & 33 & 34 & 35\\
    0 & 1 & 2 & \encircle{3} & \encircle{4} & \encircle{5} & 6 & 7 & 8 & \encircle{9} & \encircle{10} & \encircle{11} & 12 & 13 & 14 & \enred{15} & \encircle{16} & 17\\
    
    \end{array}
    \]
    \caption{$S_{32}$: after the third kind of adjustment.}
    \label{fig:max2-3-2}
\end{figure}

\end{example}

\par Lemmas \ref{Max1} and \ref{Max2} discuss about the necessary condition for the generalized-$\beta$-set $S$ that maximizes $f(S)$. The following theorem gives a detailed description of $S$. 

\begin{theorem}\label{maximal}
Let $S$ be a generalized-$\beta$-set that maximizes $f(S)$.  If $s$ is odd, then one of the following two cases holds:
\begin{enumerate}
    \item[(1)] $t=\frac{s-1}{2}$ and $a_{(t-1)m+1}=\cdots=a_{tm}=1$;
    \item[(2)] $t=\frac{s-1}{2}$, $a_{(t-1)m+1}=\cdots=a_{tm-1}=2$ and $a_{tm}=1$.
\end{enumerate}If $s$ is even, then one of the following two cases holds:
\begin{enumerate}
    \item[(1)] $t=\frac{s}{2}$ and $a_{(t-1)m}=\cdots=a_{tm-1}=1$;
    \item[(2)] $t=\frac{s}{2}$, $a_{(t-1)m+1}=\cdots=a_{tm-1}=1$ and $a_{(t-1)m}=2$.
\end{enumerate}
\end{theorem}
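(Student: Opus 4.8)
The plan is to merge the two structural lemmas Lemma~\ref{Max1} and Lemma~\ref{Max2} according to the size of $A_{t-1}-A_t$ and the parity of $s$, and then to pin down the height $t$ and the bottom super-row of the maximizer using the cell counts coming from $S\subseteq\mathcal{L}_m(s)$. First I would dispose of the regime $A_{t-1}-A_t<2m$. By Lemma~\ref{Max2}(1) this regime can occur only when $s$ is even, and in that case Lemma~\ref{Max2} already delivers $t=s/2$ together with $a_{(t-1)m}=\cd=a_{tm-1}=1$ and $a_{tm}=0$, which is exactly case~(1) for even $s$. In particular, when $s$ is odd we may assume $A_{t-1}-A_t\ge 2m$ from the start, since Lemma~\ref{Max2} is unavailable.

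In the remaining regime $A_{t-1}-A_t\ge 2m$ I would invoke Lemma~\ref{Normal}(1) and Lemma~\ref{Max1} (recall $S\in\mathbb{F}_m(s)$ by Corollary~\ref{cor: 7.3}). The relation $a_i=a_{i+m}+2$ for $1\le i\le (t-1)m$ telescopes to $a_1=a_{(t-1)m+1}+2(t-1)$, and since $a_{(t-1)m+1}\in\{1,2\}$ by Lemma~\ref{Max1}(2) we get $a_1\in\{2t-1,2t\}$. Because $\mathcal{L}_m(s)\cap[0,s-1]$ is the top row $\{1,\dots,s-1\}$ of the pyramid $P_1$, we have $a_1\le s-1$; this forces $t\le (s-1)/2$ when $s$ is odd and $t\le s/2$ when $s$ is even, with the subcase $a_{(t-1)m+1}=2$ yielding the stronger bound $t\le s/2-1$ in the even case.

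For the matching lower bound I would reuse the convexity device already exploited in the proofs of Lemma~\ref{Max1} and Lemma~\ref{Max2}. Having fixed the tie-break value $a_{(t-1)m+1}\in\{1,2\}$, the candidate is essentially determined by $t$ through the staircase relation and the fill-maximal bottom row, so I would form the one-parameter family $S_0,S_1,\dots$ of members of $\mathbb{F}_m(s)$ obtained by growing the pyramid height, and compute the second difference of $f$. The expected outcome is $f(S_{n+2})-2f(S_{n+1})+f(S_n)>0$, so $f$ is strictly convex along the family and its maximum over the feasible range is attained at the largest admissible height. Combined with the upper bound, this forces $t=(s-1)/2$ for odd $s$ and $t=s/2$ for even $s$, and at the same time eliminates the even subcase $a_{(t-1)m+1}=2$, which is confined to $t\le s/2-1$ and is therefore strictly dominated.

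With $t$ fixed I would read off the bottom super-row from the pyramid cell counts of Theorem~\ref{L_m(s)}: for $1\le k\le m-1$ the block $\mathcal{B}_{(t-1)m+k}$ lies in row $t-1$ of $P_k$, which has $s+1-2t$ cells, while $\mathcal{B}_{tm}$ lies in row $t-1$ of $P_m$, which has $s-2t$ cells. Substituting $t=(s-1)/2$ gives $a_{(t-1)m+k}\le 2$ and $a_{tm}\le 1$, and substituting $t=s/2$ gives $a_{(t-1)m+k}\le 1$ and $a_{tm}\le 0$. Feeding these bounds into the non-increasing Condition~(2) of Definition~\ref{def:4.2}, the inequality $a_{(t-1)m+1}-a_{tm}\le 1$ from Lemma~\ref{row}, the positivity of $a_{(t-1)m+1}$, and maximality (which forces every admissible cell to be occupied), the bottom super-row is pinned down: for odd $s$ it is $1,\dots,1$ when $a_{(t-1)m+1}=1$ (case~(1)) and $2,\dots,2,1$ when $a_{(t-1)m+1}=2$ (case~(2)); for even $s$ in the regime $A_{t-1}-A_t\ge 2m$ it is $1,\dots,1,0$ with $a_{(t-1)m}=a_{tm}+2=2$, which is case~(2). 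The main obstacle I anticipate is the lower bound on $t$: one must set up the growing family so that every member genuinely lies in $\mathbb{F}_m(s)$ and verify the strict convexity of $f$ cleanly, all while tracking the two tie-break types and the anomalous even subcase; by contrast the cell-count bookkeeping in the final step is routine once the pyramid structure of $\mathcal{L}_m(s)$ is in hand.
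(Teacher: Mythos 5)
Your skeleton matches the paper's: split on whether $A_{t-1}-A_t<2m$ (handled entirely by Lemma~\ref{Max2}, which indeed yields case~(1) for even $s$ and is vacuous for odd $s$), and in the regime $A_{t-1}-A_t\ge 2m$ use Lemma~\ref{Normal}(1) and Lemma~\ref{Max1} to reduce the maximizer to a one-parameter family indexed by the height $t$. However, Lemma~\ref{Max1} leaves \emph{four} families alive, not two: for each tie-break value $a_{(t-1)m+1}\in\{1,2\}$, conclusion~(1) of that lemma still permits either $a_1=\cdots=a_m$ or $a_1=\cdots=a_{m-1}=a_m+1$, which propagate down to bottom super-rows $(1,\dots,1)$, $(1,\dots,1,0)$, $(2,\dots,2)$, $(2,\dots,2,1)$ (the paper's $S_k,P_k,T_k,Q_k$). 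Your two devices for collapsing these to the stated cases both fail. First, ``maximality forces every admissible cell to be occupied'' is not available: starting from the $(1,\dots,1,0)$ configuration you cannot occupy the single remaining admissible cell of $\mathcal{B}_{tm}$ without violating Condition~(3) of Definition~\ref{def:4.2}, since that would require $a_{tm}\le a_{(t-1)m}-2=0$; the genuine alternative is the $(2,\dots,2,1)$ configuration, which differs by an entire column of $tm$ cells, and whether that switch increases $f$ is exactly the computation $f(Q_k)-f(P_k)>0$ in the paper. Second, ``the maximum within each family sits at the largest admissible height, which equals $(s-1)/2$'' is false for the all-twos family: there $a_{im+k}=a_{(i+1)m}$ forces $n_{im+k}=s-i-2$ by \eqref{eq: 5.2}, so $T_t\subseteq\mathcal{L}_m(s)$ only up to $t=(s-3)/2$ for odd $s$ (your bound $a_1\le s-1$ does not see this), and eliminating $T_{(s-3)/2}$ requires the cross-family inequality $f(S_{k+1})>f(T_k)$. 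The analogous issue occurs for even $s$, where only the $(1,\dots,1,0)$ family reaches $t=s/2$ and the other three stop at $s/2-1$, so the comparisons $f(P_{k+1})>f(T_k)$ and $f(P_{s/2})>f(Q_{s/2-1})$ are indispensable. These cross-family estimates, together with the exact tie $f(Q_{(s-1)/2})=f(S_{(s-1)/2})$ explaining why two cases survive for odd $s$, constitute most of the paper's proof and are absent from your plan.

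A smaller point: a positive second difference along a family only places the maximum at an endpoint of the admissible range, not at the right endpoint; you would still have to discard the left endpoint. The paper avoids this by showing the first differences $f(S_{k+1})-f(S_k)$ are themselves positive, i.e.\ $f$ is increasing along each family.
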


\begin{proof}

\par {First we discuss about the case where $s$ is odd}. By Corollary \ref{cor: 7.3}, we have $S\in\mathbb{F}_m(s)$. Define $S_k\in\mathbb{F}_m(s) $ as the corresponding generalized-$\beta$-set where $t=k$ and $a_{(t-1)m+1}=\cdots=a_{tm}=1$, $1\leq t\leq \frac{s-1}{2}$; $T_k\in\mathbb{F}_m(s)$ as the corresponding generalized-$\beta$-set where $t=k$ and $a_{(t-1)m+1}=\cdots=a_{tm}=2$, $1\leq t\leq \frac{s-3}{2}$;
$P_k\in\mathbb{F}_m(s)$ as the corresponding generalized-$\beta$-set where $t=k$, $a_{(t-1)m+1}=\cdots=a_{tm-1}=1$ and $a_{tm}=0$, $1\leq t\leq \frac{s-1}{2}$; and $Q_k\in\mathbb{F}_m(s)$ as the corresponding generalized-$\beta$-set where $t=k$, $a_{(t-1)m+1}=\cdots=a_{tm-1}=2$ and $a_{tm}=1$, $1\leq t\leq \frac{s-1}{2}$. By the discussion in Lemma \ref{Max1}, we only need to compare $f(S_k),f(T_k),f(P_k)$ and $f(Q_k)$.

\par Given that $|S_k|=mk^2$, for $1\leq k\leq \frac{s-3}{2}$, we have
\begin{eqnarray*}
f(S_{k+1})-f(S_k) &=& \sum_{x\in S_{k+1}}x -\sum_{x\in S_{k}} x -\sum_{i=|S_k|}^{|S_{k+1}|-1}i \\
&=& (ms-1)mk^2+\sum_{i=1}^{m} \sum_{j=2}^{2k+2}(is-j) - m(2k+1)mk^2-\sum_{i=0}^{m(2k+1)-1}i \\
&\ge& mk^2(m(s-2k-1)-1)+\sum_{i=0}^{m(2k+1)-1}(s-2k-2+i)-\sum_{i=0}^{m(2k+1)-1}i \\
&>& 0.
\end{eqnarray*}
Therefore, $\left\{S_{\frac{s-1}{2}}\right\}=\arg\max_{1\leq k\leq (s-1)/2} f(S_k)$. With similar calculations, we have
\[
\left\{T_{\frac{s-3}{2}}\right\}=\arg\max_{1\leq k\leq (s-3)/2} f(T_k);
\]
\[
\left\{P_{\frac{s-1}{2}}\right\}=\arg\max_{1\leq k\leq (s-1)/2} f(P_k);
\]
\[
\left\{ Q_{\frac{s-1}{2}} \right\}=\arg\max_{1\leq k\leq (s-1)/2} f(Q_k).
\]

\par Now we compare $f\left(S_{\frac{s-1}{2}}\right),f\left(T_{\frac{s-3}{2}}\right),f\left(P_{\frac{s-1}{2}}\right),f\left(Q_{\frac{s-1}{2}}\right)$. 

\par For $1\leq k\leq \frac{s-3}{2}$, set the first $k+1$ elements of each one of $\mathcal{B}_1(S_{k+1})$,$\cdots$,$\mathcal{B}_m(S_{k+1})$ as $l_1,\cdots,l_{m(k+1)}$. Then we have 
\begin{eqnarray*}
f(S_{k+1})-f(T_k) &=& \sum_{x\in S_{k+1}}x-\sum_{x\in T_k}x-\sum_{i=|T_k|}^{|S_{k+1}|-1}i \\
&=& ms\cdot \frac{1}{2}mk(k+1)+\sum_{i=1}^{m(k+1)}l_i-m(k+1)|T_k|-\sum_{i=1}^{m(k+1)}(i-1) \\
&=& m^2k(k+1)\left(\left(\frac{s}{2}-k-1\right)+\sum_{i=1}^{m(k+1)}\left(l_i-i+1\right)\right) \\
&>& 0.
\end{eqnarray*}
Let $k=\frac{s-3}{2}$. Then  $f\left(S_{\frac{s-1}{2}}\right)>f\left(T_{\frac{s-3}{2}}\right)$.

\par For $1\leq k\leq \frac{s-1}{2}$, set the first $k$ elements of each one of $\mathcal{B}_1(Q_k)$,$\cdots$,$\mathcal{B}_m(Q_k)$ as $l_1,\cdots,l_{mk}$. Given that $|P_k|=mk^2-k$ and $|Q_k|=mk^2+(m-1)k$, we have
\begin{eqnarray*}
&~&f(Q_k)-f(P_k) \\&=& \sum_{x\in Q_k}x-\sum_{x\in P_k}x-\sum_{i=|P_k|}^{|Q_k|-1}i \\
&=& ms\frac{m}{2}k(k-1)+\sum_{i=1}^{mk}l_i-mk(mk^2-k)-\sum_{i=0}^{mk-1}i \\
&\ge& \frac{m^2 s}{2}k(k-1)+m\frac{k(k+1)}{2}+\frac{m(m-1)}{2}ks-mk(mk^2-k)-\frac{mk(mk-1)}{2} \\
&=& \frac{mk}{2}((mk-1)s-2mk^2+(3-m)k+2) \\
&\ge& \frac{mk}{2}((mk-1)(2k+1)-2mk^2+(3-m)k+2) \\
&=& \frac{mk(k+1)}{2}>0.
\end{eqnarray*}
Let $k=\frac{s-1}{2}$. Then $f\left(Q_{\frac{s-1}{2}}\right)>f\left(P_{\frac{s-1}{2}}\right)$.

\par For $1\leq k\leq \frac{s-1}{2}$, since $|S_k|=mk^2$ and $|Q_k|=mk^2+(m-1)k$, we have
\begin{eqnarray*}
&~&f(Q_k)-f(S_k) \\&=& \sum_{x\in Q_k}x-\sum_{x\in S_k}x-\sum_{i=|S_k|}^{|Q_k|-1}i \\
&=& ms(m-1)\frac{k(k-1)}{2}+(m-1)\left(ks-\frac{k(k+1)}{2}\right)+\frac{(m-1)(m-2)}{2}ks \\
&& -(m-1)kmk^2-\frac{(m-1)k((m-1)k-1)}{2} \\
&=& \frac{m(m-1)k^2}{2}(s-2k-1) \\
&\ge& 0.
\end{eqnarray*}
Let $k=\frac{s-1}{2}$. Then $f\left(Q_{\frac{s-1}{2}}\right)=f\left(S_{\frac{s-1}{2}}\right)$.
From the discussion above, we know that $S_{\frac{s-1}{2}}$ and $Q_{\frac{s-1}{2}}$ are the only two generalized-$\beta$-sets that maximize $f(S)$.

\par Next, {we discuss about the case where $s$ is even}. We first consider the case where $A_{t-1}-A_t\ge 2m$. Then by Lemma \ref{Normal}, $a_i-a_{i+m}=2$ for $1\leq i\leq (t-1)m$. Just like the former case, for $1\leq k\leq \frac{s}{2}-1$, we can define $S_k,T_k,Q_k$; for $1\leq k\leq \frac{s}{2}$, we can define $P_k$. With similar discussions, we only need to compare $f\left(S_{\frac{s}{2}-1}\right),f\left(T_{\frac{s}{2}-1}\right),f\left(P_{\frac{s}{2}}\right),f\left(Q_{\frac{s}{2}-1}\right)$.

\par Since $f(Q_k)-f(S_k)=\frac{m(m-1)k^2}{2}(s-2k-1)$, we have $f\left(Q_{\frac{s}{2}-1}\right)>f\left(S_{\frac{s}{2}-1}\right)$.
For $1\leq k\leq \frac{s}{2}-1$, since $|P_{k+1}|=mk^2+(2m-1)k+(m-1)$ and $|T_k|=mk(k+1)$, we have
\begin{align*}
f(P_{k+1})-f(T_k) &= \sum_{x\in P_{k+1}}x-\sum_{x\in T_k}x-\sum_{i=|T_k|}^{|P_{k+1}|-1}i \\
&= (m-1)\frac{k(k+1)}{2}ms+\sum_{i=1}^{m-1}\sum_{j=1}^{k+1}(is-j)-(m-1)(k+1)mk(k+1) \\
&\ \ \ -\sum_{i=0}^{(m-1)(k+1)-1}i\\
&\ge \frac{mk}{2}(m-1)(k+1)(s-2k-2)+\sum_{i=0}^{(m-1)(k+1)-1}(s-k-1+i) \\
&\ \ \ -\sum_{i=0}^{(m-1)(k+1)-1}i\\
&\ge (m-1)(k+1)\frac{s}{2}>0.
\end{align*}
Thus we have $f\left(P_{\frac{s}{2}}\right)>f\left(T_{\frac{s}{2}-1}\right)$. Similarly, we can prove that $f\left(P_{\frac{s}{2}}\right)>f\left(Q_{\frac{s}{2}-1}\right)$.

\par Therefore, $S=P_{\frac{s}{2}}$ is the only generalized-$\beta$-set that maximizes $f(S)$ when $A_{t-1}-A_t\ge 2m$. Let $P_{\frac{s}{2}}'$ be the corresponding generalized-$\beta$-set such that $t=\frac{s}{2}$, $a_{(t-1)m}=\cdots=a_{tm-1}=1$ and $a_i-a_{i+m}=2$ for $1\leq i\leq (t-1)m-1$. By Lemma \ref{Max2}, $f\left(P_{\frac{s}{2}}\right)=f\left(P_{\frac{s}{2}}'\right)$, and we obtain that~$P_{\frac{s}{2}}$ and $P_{\frac{s}{2}}'$ are the only two generalized-$\beta$-sets that maximize $f(S)$.
\end{proof}

\begin{example}
Figures \ref{L_3(5)-2} and  \ref{P_2} give an example where $s$ is odd. Figures \ref{L_3(6)-2} and  \ref{P_3'} give an example where $s$ is even.
\end{example}

Finally, we can give the proof of the main result Theorem \ref{conj}.

\begin{proof}[Proof of Theorem \ref{conj}]
\par By Lemma \ref{if}, we only need to consider the generalized-$\beta$-sets $S$ that maximize $f(S)$. 
\par When $s$ is odd, $Q_{\frac{k-1}{2}}=\mathcal {L}_m(s)$. By Theorem \ref{maximal}, $S=Q_{\frac{k-1}{2}}$ or $S=S_{\frac{k-1}{2}}$. Both of them satisfy the conditions in Theorem \ref{L_m(s)}. Thus they are both $\beta$-sets of certain $(s,ms-1,ms+1)$-core partitions. Let $M(s,m)$ be the corresponding partition of $\mathcal {L}_m(s)$. Since the conjugate of a partition has the same size as itself, by Lemma \ref{conjugate}, $S_{\frac{k-1}{2}}$ is the $\beta$-set of the conjugate of $M(s,m)$.
\par Similarly, the conclusion is also true when $s$ is even.
\end{proof}

\section{Discussions}
By extending the concept of $\beta$-sets to generalized-$\beta$-sets, 
we determine the possible structures of $(s,ms-1,ms+1)$-core partitions  with the largest size, which proves the conjecture proposed by Nath and Sellers in \cite{NS2}. 
Currently, most research results on simultaneous core partitions are about {$(s_1,s_2,\ldots, s_m)$}-core partitions with $m\leq 3$. When $m\geq 4$, it still lacks proper tools for studying such general simultaneous core partitions.
We believe that the concepts and techniques related to generalized-$\beta$-sets introduced in this paper offer some insights for exploring statistics of general simultaneous core partitions, of which we know very little at this moment.

\section*{Acknowledgements}

This work was supported by the National Science Foundation of China [Grant No. 12201155].

\end{document}